\pgfplotsset{width=10cm,compat=1.9}
\DeclareSymbolFont{cyrletters}{OT2}{wncyr}{m}{n}
\DeclareMathSymbol{\Sha}{\mathalpha}{cyrletters}{"58}
\theoremstyle{plain}
\newtheorem{Theorem}[subsection]{Theorem}
\newtheorem{Proposition}[subsection]{Proposition}
\newtheorem{Lemma}[subsection]{Lemma}
\newtheorem{Corollary}[subsection]{Corollary}
\theoremstyle{definition}
\theoremstyle{remark}
\newtheorem{Definition}[subsection]{\bf Definition}
\newtheorem{Example}[subsubsection]{\bf Example}
\newtheorem{Remark}[subsection]{\bf Remark}
\numberwithin{equation}{subsection}
\renewcommand{\b}{{\boldsymbol b}}
\renewcommand{\v}{{\mathtt v}}
\newcommand{\cB}{{\text{\rm B}}}
\newcommand{\cC}{{\text{\rm C}}}
\newcommand{\F}{{\mathbb F}}
\newcommand{\G}{{\text{\rm G}}}
\renewcommand{\H}{{\text{\rm H}}}
\newcommand{\M}{{\text{\rm M}}}
\newcommand{\N}{{\mathbb N}}
\newcommand{\Q}{{\mathbb Q}}
\newcommand{\X}{{\text{\rm X}}}
\newcommand{\cZ}{{\text{\rm Z}}}
\newcommand{\wZ}{{\widehat{\mathbb Z}}}
\newcommand{\Z}{{\mathbb Z}}
\newcommand{\alt}{{\text{\rm alt}}}
\newcommand{\bmx}[1]{{\begin{bmatrix}#1\end{bmatrix}}}
\newcommand{\bmxr}[1]{{\begin{bmatrix*}[r]#1\end{bmatrix*}}}
\newcommand{\br}[1]{{\left<{#1}\right>}}
\newcommand{\car}{{\text{\rm char}}}
\newcommand{\ccup}{{\;\mathsmaller\cup\;}}
\newcommand{\col}{{\text{\rm col}}}
\newcommand{\ds}[1]{{\displaystyle{#1}}}
\renewcommand{\gcd}{{\text{\rm gcd}}}
\newcommand{\id}{{\text{\rm id}}}
\newcommand{\ind}{{\text{\rm ind}}}
\renewcommand{\inf}{{\text{\rm inf}\,}}
\newcommand{\inv}{{\text{\rm inv}}}
\newcommand{\isim}{{\;\overset{\sim}{\longrightarrow}\;}}
\newcommand{\isom}{{\;\simeq\;}}
\renewcommand{\ker}{{\text{\rm ker}}}
\newcommand{\lcm}{{\text{\rm lcm}}}
\newcommand{\lr}{\longrightarrow}
\newcommand{\nr}{{\text{\rm nr}}}
\newcommand{\overbar}[1]{\mkern 1.5mu\overline{\mkern-1.5mu#1\mkern-1.5mu}\mkern 1.5mu}
\newcommand{\per}{{\text{\rm per}}}
\newcommand{\pfaff}{{\text{\rm pfaff}}}
\renewcommand{\pmod}[1]{{\text{\rm (mod ${#1}$)}}}
\newcommand{\prestar}{{{}^*\!}}
\newcommand{\res}{{\text{\rm res}}}
\newcommand{\row}{{\text{\rm row}}}
\newcommand{\sep}{{\text{\rm sep}}}
\newcommand{\sym}{{\text{\rm sym}}}
\newcommand{\tg}{{\text{\rm tg}}}
\newcommand{\ttr}{{\text{\rm tr}}}
\newcommand{\tr}{{\text{\rm t}}}
\newcommand{\un}{\underline}
\newcommand{\Alt}{{\text{\rm Alt}}}
\newcommand{\Aut}{{\text{\rm Aut}}}
\newcommand{\Br}{{\text{\rm Br}}}
\newcommand{\Ext}{{\text{\rm Ext}}}
\newcommand{\Gal}{{\text{\rm Gal}}}
\newcommand{\Hom}{{\text{\rm Hom}}}
\newcommand{\Nm}{{\text{\rm N}}}
\begin{document}
\small

\title[Tame Brauer Group of a Higher Local Field]
{Hasse Invariant for the Tame Brauer Group of a Higher Local Field}

\subjclass[2010]{11R52, 11S45, 12G05, 16K50}

\author{Eric Brussel}
\address{
Department of Mathematics\\
California Polytechnic State University}
\email{ebrussel@calpoly.edu}

\begin{abstract}
We generalize the Hasse invariant of local class field theory 
to the tame Brauer group of a higher dimensional local field,
and use it to study the arithmetic of central simple algebras, which
are given {\it a priori} as tensor products of standard cyclic algebras. 

We also compute the tame Brauer dimension (or {\it period-index bound}) and 
the cyclic length of a general henselian-valued field of finite rank and finite residue field.
\end{abstract}

\maketitle

\section{Introduction}

One of the seminal works of 20th century number theory was the 1932 paper \cite{BHN32}
by Brauer, Hasse, and Noether. 
The main theorems, cyclicity and period-index for division algebras over $\Q$, formed
a basis for the development of class field theory, and
were in turn based on Hasse's local class field theory, in Hasse's 1931 
article \cite{Hasse31}, see \cite[6.1]{Roq05}. 
Key to the latter was a natural isomorphism, the {\it Hasse invariant map}
\[\inv_F:\H^2(F,\mu_n)\isim\tfrac 1n\Z/\Z\]
over a local field $F$.
This map summarizes the Brauer group over a local field, providing
not only a functorial group isomorphism, but also the index of a class, 
and a structure theorem for $F$-division algebras, showing them all to be cyclic.

The Hasse invariant was generalized to higher dimensional local fields by Kato
in his higher local class field theory \cite[Theorem 3]{Ka79b}.
Kato established a natural isomorphism
\[\inv_F:\H^{d+1}(F,\mu_n^{\otimes(d-1)})\isim\tfrac 1n\Z/\Z\]
over a local field $F$ of dimension $d$, and used it to extend
the classical reciprocity map to higher local class field theory. 

Though a cornerstone of class field theory, Hasse's original motivation 
in \cite{Hasse31} was to understand the arithmetic of central simple algebras over local fields
\cite[Remark 6.3.1]{Roq05}. To extend this aspect of his work, 
we need to work in the Brauer group.

In this paper we generalize Hasse's invariant map to the prime-to-$p$ part
of $2\,\Br(F)$ for a $d$-dimensional henselian-valued field $F$ with finite residue field $k$.
Our generalization contains a precise index formula; a structure theorem
for division algebras, showing them to be tensor products of cyclic division algebras;
and an algorithm for expressing the underlying division algebra of a given central simple
algebra as a tensor product of cyclic division algebras.

We also compute the Brauer dimension, cyclic length, and period-index ratio
for the tame Brauer group. Though these results are expected,
and easily proved for higher local fields,
they are new for general henselian-valued fields with finite residue field.

Our generalization of the Hasse invariant 
is a map from the $n$-torsion ${}_n\Br(F)$ into the group of skew-symmetric matrices:
\[\H^2(F,\mu_n)\lr\Alt_{d+1}(\tfrac 1n\Z/\Z)\,,\]
When restricted to ${}_n2\,\Br(F)$ it factors through a natural isomorphism with 
the wedge product ${}_n2\,\X(F)\wedge\cZ^1(F,\mu_n)$,
and it is functorial with respect to basis change,
so that $2$-block diagonalization on the right ``diagonalizes'' a Brauer class.
We prove the index of the Brauer class is the order of the pfaffian subgroup
of the skew-symmetric matrix, and, like the determinant, has an explicit 
formula in terms of the matrix coefficients.
At the same time we show the underlying 
division algebra is a tensor product of cyclic division algebras of degrees equal to 
the abelian group theory invariants of the row space of the skew-symmetric matrix.
To prove our index formula we use the valuation theoretic framework of
\cite{TW15}. Our methods thus generalize 
\cite[Example 1.2.8]{TW15}, which was the method used by Hasse in \cite{Hasse31} to compute
the Hasse invariant over an ordinary local field.
 
For example, if $F=\F_5((t_1))((t_2))((t_3))$ is the $3$-dimensional local field, 
and we put $t_0=[2]_5$, then the sum of quaternions
\[\alpha=(t_0,t_1)+(t_0,t_2)+(t_0,t_3)+(t_1,t_2)+(t_1,t_3)+(t_2,t_3)\]
is mapped to the skew-symmetrix matrix
\[\alt_{\un t}(\alpha)=\bmx{0&\tfrac 12&\tfrac 12&\tfrac 12\\[2pt]
-\tfrac 12&0&\tfrac 12&\tfrac 12\\[2pt]
-\tfrac 12&-\tfrac 12&0&\tfrac 12\\[2pt]
-\tfrac 12&-\tfrac 12&-\tfrac 12&0}\]
The pfaffian formula computes
$a_{12}a_{34}-a_{13}a_{24}+a_{14}a_{23}=1/4$ in $\Q/\Z$, so the index is $4$.
This calculation is unexpectedly easy.
In this case the index is computable in principle from the Witt index formula for a Laurent series field,
but that approach becomes hopelessly complicated even for low values of $d$,
and does not extend to more general henselian fields.

Elements of $\Br(F)$ that do not fit into our theory include the $p$-primary part $\Br(F)_p$,
where $p=\car(k)$, and elements of $\Br(F)_2-2\,\Br(F)_2$.
The $p$-primary part has a much different character, and is not even
a candidate for a similar treatment. And though when $2\neq p$, $\Br(F)_2$ has the right number
of summands to define a map into skew-symmetric matrices, we prove that there is no natural map
until we restrict to the subgroup $2\,\Br(F)_2$. 
In particular, the {\it ad hoc} map is not functorial with respect to basis change, 
and is therefore useless. 
The problem with $2$-torsion could be anticipated, since totally ramified
classes $(t,t)_n$ violate ``skew-symmetry''  when $n$ is the maximal power of $2$ dividing $|\mu(F)|$.

In \cite{Br01b} and \cite{Br07} the author used the machinery of symplectic modules 
to derive an index formula and minimal expression for an arbitrary class in 
the (tame) Brauer group of a strictly henselian field, that is, a henselian field whose
residue field is algebraically closed.
The central observation was that the cocycles underlying a Brauer class define alternating
forms on the Galois group.
A more valuation-theoretic treatment using symplectic modules is formalized by 
Tignol and Wadsworth in \cite[Chapter 7]{TW15}.
These ideas predated the present author's work, and are rooted in (more general) work by Amitsur-Rowen-Tignol (\cite{ART}),
Tignol (\cite{T82}), Tignol-Amitsur (\cite{TA86}), and Tignol-Wadsorth (\cite{TW87}).
Their methods are especially powerful when the residue fields are algebraically closed,
and much is reduced to valuation theory. 
With non algebraically closed residue fields, 
the augmentation with Galois theoretic methods
seems necessary to obtain a comparable understanding.
There remains the problem of how to construct a group of Galois symplectic modules isomorphic to 
the tame Brauer group, as outlined in \cite[Chapter 7]{TW15}.

The paper is organized as follows. We first prove structure theorems for the character group
and Brauer group of a henselian-valued field of rank $d$, with finite residue field.
These results are well-known; we prove them for convenience, and use them immediately
to compute Brauer dimension and cyclic length. Then we prove Theorem~\ref{cocycleskim},
which shows ${}_n2\,\Br(F)$ is naturally
isomorphic to a wedge product, 
and we define basis and basis change for this wedge product, before proving the main
results, Theorem~\ref{alta} and Theorem~\ref{alt}, relating the arithmetic of the Brauer group
to the arithmetic of skew-symmetric matrices with coefficients in $\Q/\Z$.
We end by showing why the missing set ${}_n\Br(F)-{}_n2\,\Br(F)$ resists the same
treatment, even when $F$ contains $\mu_n$.

The author thanks Kelly McKinnie for discussions leading to the idea for this paper,
and Adrian Wadsworth's former student Frank Chang for alerting the author to an early hazard. 
The author dedicates this paper to his parents, Morton and Phyllis Brussel.

\section{Preliminaries}

Let $F$ be a field, let $n$ be a number invertible in $F$,
and let $m=|\mu_n(F)|$. 
Let $\Br(F)=\H^2(F,F_\sep^\times)$ and $\X(F)=\H^1(F,\Q/\Z)$ 
denote the Brauer group and character group of $F$, with $n$-torsion subgroups
${}_n\Br(F)=\H^2(F,\mu_n)$ and ${}_n\X(F)=\H^1(F,\tfrac 1n\Z/\Z)$.
If $\theta\in{}_n\X(F)$ has order $n$, let $F(\theta)/F$ denote the corresponding
cyclic extension of degree $n$. 

The coboundary map $\partial:F_\sep^\times\to\cC^1(\G_F,F_\sep^\times)$,
which takes an element $x\in F_\sep^\times$ to the function $\sigma\mapsto x^{\sigma-1}$,
has kernel $F^\times$ by Galois theory, and image $\cB^1(\G_F,F_\sep^\times)$, which
equals $\cZ^1(\G_F,F_\sep^\times)$ by Hilbert 90.
Since ${}_n(F_\sep^\times/F^\times)=F^{\times 1/n}/F^\times$ and ${}_n\cZ^1(\G_F,F_\sep^\times)=\cZ^1(F,\mu_n)$,  
we have a natural isomorphism 
\[\partial:F^{\times 1/n}/F^\times\isim\cZ^1(F,\mu_n)\]
We suppress the notation $\partial$, and write $z^{1/n}$ for 
the cocycle $\partial(z^{1/n })$.
The coboundaries $\cB^1(F,\mu_n)$ are the image $\partial(\mu_nF^\times/F^\times)\isom\mu_n/\mu_m$,
and we have a commutative diagram
\[\begin{tikzcd}
1\arrow[r]&\mu_n/\mu_m\arrow[r]\arrow[d, "\wr"]&F^{\times 1/n}/F^\times\arrow[r, "n"]\arrow[d, "\wr"]
&F^\times/F^{\times n}\arrow[r]\arrow[d, "\wr"]&1\\
0\arrow[r]&\cB^1(F,\mu_n)\arrow[r]&\cZ^1(F,\mu_n)\arrow[r]&\H^1(F,\mu_n)\arrow[r]&0
\end{tikzcd}
\]
Let $(t)_n\in\H^1(F,\mu_n)$ denote the class of $t^{1/n}$.

For a prime $p$ let $\mu'$ be the group of all prime-to-$p$ roots of unity,
let $(\Q/\Z)'$ be the prime-to-$p$ part of $\Q/\Z$,
and let \[\zeta^*:\mu'\,\isim(\Q/\Z)'\]
be a fixed isomorphism. Put $\zeta_n^*=\zeta^*|_{\mu_n}$
and set $\zeta_n=(\zeta_n^*)^{-1}(1/n)$.
Since $\mu_m\leq F^\times$, 
$\zeta_m^*$ is a $\G_F$-module isomorphism, and we have an induced isomorphism
\begin{align}
\label{zetam}
\begin{split}
\zeta_m^*:\H^1(F,\mu_m)&\isim\H^1(F,\tfrac 1m\Z/\Z)\\
(t)_m&\longmapsto(t)_m^*:=\zeta_m^*\circ t^{1/m}
\end{split}
\end{align}
If $\theta=(t)_m^*$ has order $m$,
then $F(\theta)=F(t^{1/m})$ has degree $m$ over $F$ (see \cite[XIV.2]{Se79}).

\subsection{\bf Cyclic Classes}\label{cyclic}
If $|\theta|=n$, the {\it cyclic Brauer class} $(\theta,t)\in\Br(F)$ is the cup 
product $\theta\ccup(t)_n$. 
The cyclic class $(\theta,t)$ determines the {\it cyclic algebra of degree $n$},
\[\Delta(\theta,t):=\{F(\theta)[y]:y^n=t,xy=y\sigma(x)\;\forall x\in F(\theta)\}\]
where $\sigma\in\Gal(F(\theta)/F)$ satisfies $\theta(\sigma)=1/n$.
Since the cup product is induced by the tensor product, 
if $m|n$ then $\theta\ccup(t)_m=\theta\ccup(t^{n/m})_n=(\tfrac nm\theta,t)$,
and $(s)_m^*\ccup(t)_n=(s)_m^*\ccup(t)_m=(s,t)_m$.
If $\theta=(s)_m^*$, then $(\theta,t)=(s,t)_m$, the {\it symbol Brauer class},
is represented by the {\it symbol algebra} 
\[\Delta(s,t)_m:=\{F[x,y]:x^m=s,y^m=t,[x,y]=\zeta_m\}\]
The {\it norm criterion} states that $(\theta,t)=0$ if and only if $t$ is a norm from $F(\theta)$.
If $\theta=(t)_m^*$ 
then $(\theta,t)=(t,t)_m=0$ if $m$ is odd, and
$(\theta,-t)=(t,-t)_m=0$ if $m$ is even, in which case
$(t,t)_m=(t,-1)_m$. 

Since $1$ is a norm, $(\theta,-1)$ has order dividing $2$.
If $\theta=2\theta'$ for some $\theta'$,
then $(\theta,-1)=2(\theta',-1)=0$. Conversely if $(\theta,-1)=0$ then
$\theta=2\theta'$ for some $\theta'$ by \cite{AFSS}.
Thus $(\theta,-1)=0$ if and only if $\theta\in 2\,\X(F).$ 
We call this result {\it Albert's criterion}.

\subsection{\bf 7-Term Sequence}\label{7term}
For an exact sequence $1\to N\to G\to\overbar{G}\to 1$ of profinite groups, with $N$ closed,
the Hochschild-Serre spectral sequence $\H^p(\overbar G,\H^q(N,M))\Rightarrow\H^{p+q}(G,M)$
yields a 7-term sequence (\cite[Appendix B]{M}), which we note for the reader's convenience:
\begin{align*}
0\lr &\H^1(\overbar G,M^{N})\lr \H^1(G,M)\lr 
\H^1(N,M)^{\overbar G}\lr \H^2(\overbar G,M^{N})\\
&\lr \ker[\H^2(G,M)\to\H^2(N,M)^{\overbar G}]
\lr \H^1(\overbar G,\H^1(N,M))\lr \H^3(\overbar G,M^{N})
\end{align*}

\section{Standard Setup}

\subsection{}\label{setup}
Let $F=(F,\v)$ be a henselian-valued field with finite residue field $k$ 
of cardinality $|k|=q=p^f$, and totally ordered value group $\Gamma_F\isom\Z^d$ of rank $d\geq 1$.
In the following,
\begin{enumerate}[\quad $\circ$]
\item
$\ell\neq p$ is a prime
\item
$n$ is a power of $\ell$
\item
$m=|\mu_n(k)|=|\mu_n(F)|=\gcd(q-1,n)$
\item
$m'=m/2$ if $m=2^{\v_2(q-1)}$, and $m'=m$ otherwise.
\end{enumerate}
Note $m=1$ if and only if $\mu_\ell\not\leq F^\times$, 
and $m=n$ if and only if $\mu_n\leq F^\times$.

Let $F_\nr$ denote the maximal unramified extension of $F$, which is the strict henselization of $F$,
and let $F_\ttr$ be the maximal tamely ramified extension of $F$, which is obtained from $F_\nr$ by 
adjoining all prime-to-$p$-th roots of elements of $F$, by \cite[Proposition A.22]{TW15}.
We identify $\G_k$ with $\Gal(F_\nr/F)$, and put $\G_F^\ttr=\Gal(F_\ttr/F)$.

\subsection{Index in the Tame Brauer Group}\label{value}
Suppose $F$ is henselian with finite residue field $k$, and $D/F$ is a division algebra.
To compute index we will use the valuation theoretic framework of Tignol-Wadsworth's \cite{TW87}
and Jacob-Wadsworth's \cite{JW90},
which we summarize. 

Since $k$ is finite, the residue division algebra $\overbar D$ is a field extension of $k$,
by Wedderburn's Theorem.
The surjective homomorphism
$\theta_D:\Gamma_D/\Gamma_F\to\Gal(Z(\overbar D)/k)$ of \cite[p.96]{DK80} (see \cite[Proposition 1.7]{JW90})
has kernel $\Gamma_T/\Gamma_F$ by \cite[Theorem 6.3]{JW90}, and it follows that 
$[\overbar D:k]=[\Gamma_D:\Gamma_T]$.
Therefore by Draxl's Ostrowski Theorem \cite[Theorem 2]{Dr84},
if the index of $D$ is prime-to-$\car(k)$, then
\[[D:F]=[\Gamma_D:\Gamma_T][\Gamma_D:\Gamma_F]\]
The index of $D$ is the square root of $[D:F]$.
By \cite[Decomposition Lemma 6.2]{JW90} (see also \cite[Proposition 8.59]{TW15}), $D\sim S\otimes_F T$
where $S$ and $T$ are nicely semiramified and totally ramified $F$-division algebras,
and $\Gamma_D=\Gamma_S+\Gamma_T$ by \cite[Theorem 6.3]{JW90}.

\section{Multiplicative Group}\label{unifsbg}
Assume \eqref{setup}.
With $\overbar G=\G_k$, $G=\G_F$, $N=\G_{F_\nr}$, and $M=\mu_n$, 
\eqref{7term} yields
\[1\lr k^\times/n\lr F^\times\!/n\lr F_\nr^\times/n\lr 1\] 
and composing with the valuation map $\v:F_\nr^\times/n\isim\Gamma_{F_\nr}/n=\Gamma_F/n$, 
we obtain the usual valuation sequence on $F^\times\!/n$. This sequence splits with the choice of a 
{\it uniformizer subgroup} $T_n$ of $F^\times\!/n$,
which is the group generated by a basis of {\it uniformizers} $\{(x_1)_n,\dots,(x_d)_n\}$: 
elements of $F^\times\!/n$ whose values $\{\v(x_1),\dots,\v(x_d)\}$ form a basis of $\Gamma_F/n$.
Since $k^\times$ is a finite cyclic group and $\mu_n(k)=\mu_m(k)\isom\mu_m(F)$, 
we have $k^\times/n=k^\times/m\isom\Z/m$. 
Thus 
\begin{equation}\label{multgp}
F^\times\!/n\isom k^\times/m\times T_n\isom\mu_m(F)\times T_n\isom\Z/m\oplus(\Z/n)^d
\end{equation}
and the natural map $F^\times\!/n\to F_\nr^\times/n$ maps $T_n$ isomorphically onto $F_\nr^\times/n$.
Similarly, the sequence
\[1\lr k^{\times 1/n}/k^\times\lr F^{\times1/n}/F^\times\lr F_\nr^{\times1/n}/F_\nr^\times\lr 1\] 
shows
\[F^{\times 1/n}/F^\times\isom\mu_{mn}/\mu_m\times\langle t_1^{1/n},\dots,t_d^{1/n}\rangle\isom(\Z/n)^{d+1}\]
for a uniformizer basis $\{t_1^{1/n},\dots,t_d^{1/n}\}$ for $F^{\times 1/n}/F^\times$.
For $|k^{\times 1/n}|=|\zeta_{n(q-1)}|=n(q-1)$, so $k^{\times 1/n}/k^\times$ has order $n$. 
If $t_0^{1/n}$ generates $\mu_{mn}/\mu_m$, 
the set $\{t_0^{1/n},\dots,t_d^{1/n}\}$ forms a basis for $F^{\times 1/n}/F^\times$.
We will say the basis is in {\it standard form} if its last $d$ elements make a uniformizer basis.

\section{Galois Group and Character Group}

The structure theory of the Galois group of the maximal tamely 
ramified extension of a local field goes back at least to Iwasawa (\cite{I55});
see \cite[VII.5]{NSW} for additional background.
Assume \eqref{setup}. 
We have a split exact sequence
\[\begin{tikzcd}
1\arrow[r]&\G_{F_\nr}^\ttr\arrow[r]
&\G_F^\ttr\arrow[r]&\G_k\arrow[r]\arrow[l, dashed, shift left=5pt, "s"]&1 
\end{tikzcd}\]
whereby $\G_F^\ttr=\G_k\ltimes\G_{F_\nr}^\ttr$, 
$\G_{F_\nr}^\ttr\isom(\wZ')^d$, and $\G_k\isom\wZ$.
Let $\Phi_0= s(1)$ be a Frobenius generator, which exponentiates by $q$.
Let $\{x_1,\dots,x_d\}$ be a uniformizer basis for $F$, generating a subgroup $T\leq F^\times$.
Let $\{\Phi_1,\dots,\Phi_d\}$ be a (topological) basis for $\G_{F_\nr}^\ttr$ dual to $\{x_1,\dots,x_d\}$,
so that for each $n$, $x_j^{1/n}(\Phi_i)=\zeta_n^{\delta_{ij}}$, $i,j\geq 1$.
Let $\un\Phi=\{\Phi_0,\dots,\Phi_d\}$ be the resulting basis for $\G_F^\ttr$.
Then we have the presentation
\begin{equation}\label{origpres}
\G_F^\ttr=\br{\Phi_0,\dots,\Phi_d:\,[\Phi_0,\Phi_j]=\Phi_j^{q-1},\,\forall j\geq 1;\,
[\Phi_i,\Phi_j]=e,\,\forall i,j\geq 1}
\end{equation}

\begin{Theorem}\label{chargp}
Assume \eqref{setup}, let $T_{q-1}\leq F^\times/(q-1)$ be a uniformizer group, 
and let $T_{q-1}^*=\zeta_{q-1}^*(T_{q-1})$ as in \eqref{zetam}.
Then $\X(F^\ttr/F)\isom\X(k)\times T_{q-1}^*\isom\Q/\Z\oplus(\tfrac 1{q-1}\Z/\Z)^{\oplus d}$,
and \[{}_n\X(F)\isom\tfrac 1n\Z/\Z\oplus(\tfrac 1m\Z/\Z)^{\oplus d}\]
\end{Theorem}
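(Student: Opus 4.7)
The plan is to reduce the computation to a Pontryagin-duality exercise for $\G_F^\ttr$. First I would observe that since $n$ is a power of $\ell\neq p$ and the kernel of $\G_F\twoheadrightarrow\G_F^\ttr$ is pro-$p$, every $n$-torsion character of $\G_F$ factors through $\G_F^\ttr$, and hence through its abelianization. Thus
\[{}_n\X(F)\;=\;{}_n\X(F^\ttr/F)\;=\;\Hom\bigl((\G_F^\ttr)^{\ab},\,\tfrac 1n\Z/\Z\bigr),\]
and the theorem reduces to computing $(\G_F^\ttr)^{\ab}$ from the presentation \eqref{origpres}.

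Next I would abelianize that presentation. The commutator relations $[\Phi_0,\Phi_j]=\Phi_j^{q-1}$ force $\Phi_j^{q-1}=1$ in the abelianization, at which point the Frobenius action on $(\wZ')^d$ becomes trivial (since $q\equiv 1\pmod{q-1}$) and the semidirect product collapses to a direct sum
\[(\G_F^\ttr)^{\ab}\;\isom\;\wZ\oplus\bigl((\wZ')^d/(q-1)(\wZ')^d\bigr)\;\isom\;\wZ\oplus(\Z/(q-1))^{\oplus d},\]
using $\gcd(q-1,p)=1$. Dualizing into $\Q/\Z$ then yields $\X(F^\ttr/F)\isom\Q/\Z\oplus(\tfrac 1{q-1}\Z/\Z)^{\oplus d}$.

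To realize this decomposition as $\X(k)\oplus T_{q-1}^*$, I would identify the first summand with $\X(k)$ by inflation along $\G_F^\ttr\twoheadrightarrow\G_k$, and identify the second with $T_{q-1}^*$ via \eqref{zetam}: the character $(x_i)_{q-1}^*$ has order $q-1$, vanishes on $\Phi_0$, and sends $\Phi_j$ to $\zeta_{q-1}^*(\zeta_{q-1}^{\delta_{ij}})=\delta_{ij}/(q-1)$, so $\{(x_1)_{q-1}^*,\dots,(x_d)_{q-1}^*\}$ is dual to $\{\Phi_1,\dots,\Phi_d\}$ and generates the second factor. Restricting to $n$-torsion and using ${}_n(\tfrac 1{q-1}\Z/\Z)=\tfrac 1{\gcd(n,q-1)}\Z/\Z=\tfrac 1m\Z/\Z$ then delivers ${}_n\X(F)\isom\tfrac 1n\Z/\Z\oplus(\tfrac 1m\Z/\Z)^{\oplus d}$.

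The step deserving the most care is the opening reduction, which hinges on the wildly ramified kernel $\Gal(F_\sep/F_\ttr)$ being pro-$p$; this is the content of \cite[Proposition A.22]{TW15}, already invoked in \eqref{setup}. The remainder is a formal calculation from the presentation \eqref{origpres} together with the Kummer duality recorded in \eqref{unifsbg}.
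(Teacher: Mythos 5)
Your proposal is correct and follows essentially the same route as the paper: both proofs abelianize the presentation \eqref{origpres} to get $(\G_F^\ttr)^{\ab}\isom\widehat\Z\oplus(\widehat\Z'/(q-1))^{\oplus d}$, dualize into $\Q/\Z$, identify the first summand with $\X(k)$ and the second with $T_{q-1}^*$, and then take $n$-torsion. The only differences are cosmetic: you make explicit the preliminary reduction ${}_n\X(F)={}_n\X(F^\ttr/F)$ via the pro-$p$ wild kernel (which the paper leaves implicit behind ``the last statement is immediate''), and you verify the identification of the second summand with $T_{q-1}^*$ by a direct dual-basis computation where the paper cites Kummer theory over $F_\nr$.
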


\begin{proof}
From \eqref{origpres} we compute 
$[\G_F^\ttr,\G_F^\ttr]=(\G_{F_\nr}^\ttr)^{q-1}\isom(q-1)\widehat\Z^{\prime d}$,
so 
\[\X(F^\ttr/F)=\Hom(\G_F^\ttr,\Q/\Z)=\Hom(\widehat\Z\times\tfrac{\widehat\Z^{\prime d}}{q-1},\Q/\Z)
\isom\Q/\Z\oplus(\tfrac 1{q-1}\Z/\Z)^{\oplus d}\]
The first factor is $\X(k)$, 
and since $\G_{F_\nr}^\ttr/(\G_{F_\nr}^\ttr)^{q-1}\isom\Gal(F_\nr^{1/(q-1)}/F_\nr)$, 
the second factor is $T_{q-1}^*$ by Kummer theory, for any uniformizer subgroup $T_{q-1}$.
The last statement is immediate.
\end{proof}

\section{Brauer Group}

\subsection{Brauer Group of $F_\nr$}\label{strictBrgp}
Let $F=(F,\v)$ be henselian-valued of rank $d$, with residue field $k$ either finite, as in \eqref{setup}, 
or algebraically closed, in which case $F$ is strictly henselian.
The tame Brauer group of a strictly henselian field is naturally isomorphic to
a group of alternating forms on
the absolute Galois group, by \cite{Br01b} and \cite{Br07}, 
giving a method for finding a minimal expression and an index formula for each class.
We prove a slight variant.

Assume $\mu_n\leq F^\times$.
Let $G=\Gal(L/F)$, where either $L=F^{1/n}$, or $L=\varinjlim F^{1/n}$,
with the limit over all prime-to-$p$ numbers $n$ if $k$ is algebraically closed.
Then $G$ is isomorphic to either $(\Z/n)^{d+1}$, $(\Z/n)^d$, or $\widehat\Z^{\prime d}$.
Let $\Alt(G,\mu_n)$ denote the set of continuous alternating bilinear functions on $G$.

\begin{Lemma}\label{uct}
In the setup above, with $\mu_n\leq F^\times$,
there is a commutative diagram
\[\begin{tikzcd}
0\arrow[r]&K\arrow[d]\arrow[r]
&\H^1(F,\tfrac 1n\Z/\Z)\otimes\H^1(F,\mu_n)\arrow[d, "\cup"]\arrow[r,"\alt"]
&\Alt(G,\mu_n)\arrow[d,equals]\arrow[l, dashed, shift left=5pt]\arrow[r]&0\\
0\arrow[r]&\Ext_\Z^1(G,\mu_n)\arrow[r]&\H^2(G,\mu_n)\arrow[r,"{[\alt]}"]&\Alt(G,\mu_n)\arrow[r]
\arrow[l, dashed, shift left=5pt]&0
\end{tikzcd}\]
where $\alt([f])(\sigma,\tau):=f(\sigma,\tau)/f(\tau,\sigma)$ for $f\in\cZ^2(G,\mu_n)$.
If $G$ is torsion-free, then $[\alt]$ is an isomorphism,
and $\H^2(G,\mu_n)$ is generated by cyclic classes.
If $G=(\Z/n)^r$, then $\Ext_\Z^1(G,\mu_n)\isom\mu_n^r$.
\end{Lemma}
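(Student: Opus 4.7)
The plan is to build both rows of the diagram, verify commutativity, and then derive the two final claims from the resulting framework. For the bottom row, I would invoke the standard universal coefficient sequence for the continuous cohomology of an abelian profinite group $G$ with trivial action on the finite module $\mu_n$, namely
$$0\lr\Ext^1_\Z(G,\mu_n)\lr\H^2(G,\mu_n)\lr\Hom_\Z(\wedge^2 G,\mu_n)\lr 0,$$
with $\Hom(\wedge^2 G,\mu_n)=\Alt(G,\mu_n)$. The right-hand surjection is the antisymmetrization $[f]\mapsto\bigl((\sigma,\tau)\mapsto f(\sigma,\tau)/f(\tau,\sigma)\bigr)$, which descends to cohomology because the coboundary $\partial c(\sigma,\tau)=c(\tau)c(\sigma\tau)^{-1}c(\sigma)$ is symmetric when $G$ is abelian. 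Its kernel is represented by symmetric cocycles, and these classify \emph{abelian} central extensions $0\to\mu_n\to E\to G\to 0$, i.e.\ they form $\Ext^1_\Z(G,\mu_n)$. A splitting of the sequence is furnished by Künneth applied to a topological basis of $G$.

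For the top row, Kummer theory (available since $\mu_n\leq F^\times$) identifies $\H^1(F,\mu_n)=\Hom(G,\mu_n)$, and $\zeta_n^*$ identifies $\H^1(F,\tfrac 1n\Z/\Z)=\Hom(G,\tfrac 1n\Z/\Z)$. Define $\alt$ on the tensor product by the wedge $\theta\otimes\phi\mapsto\bigl((\sigma,\tau)\mapsto\phi(\tau)^{\tilde\theta(\sigma)}\phi(\sigma)^{-\tilde\theta(\tau)}\bigr)$, where $\tilde\theta:G\to\Z$ is any set-theoretic lift of the $\Z/n$-valued character associated to $\theta$ under $\tfrac 1n\Z/\Z\isom\Z/n$, and set $K=\ker(\alt)$. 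Surjectivity of $\alt$ is elementary: choose a dual basis of characters and expand any alternating form in basis wedges. Commutativity of the diagram reduces to the identity $[\alt](\theta\cup\phi)=\alt(\theta\otimes\phi)$, which I would verify on the standard cup-product cocycle $f(\sigma,\tau)=\phi(\tau)^{\tilde\theta(\sigma)}$ by direct computation of $f(\sigma,\tau)/f(\tau,\sigma)$. Exactness then forces the left vertical map $K\to\Ext^1_\Z(G,\mu_n)$, and the dashed splittings on top come from the same basis choice used for Künneth on the bottom.

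The final two assertions follow quickly. If $G$ is torsion-free, i.e.\ $G\isom(\widehat\Z')^d$, then each $\widehat\Z'$-factor has continuous cohomological dimension one, so Künneth collapses to $\H^2(G,\mu_n)=\Hom(\wedge^2 G,\mu_n)$ and $\Ext^1_\Z(G,\mu_n)=0$; thus $[\alt]$ is an isomorphism. Combined with surjectivity of the top $\alt$ and commutativity, a diagram chase shows that the cup-product map is surjective, proving $\H^2(G,\mu_n)$ is generated by cyclic classes $\theta\cup\phi$. If $G=(\Z/n)^r$, the direct computation $\Ext^1_\Z((\Z/n)^r,\mu_n)=\bigoplus_{i=1}^r\mu_n/n\mu_n=\mu_n^r$ is immediate from $n\mu_n=0$. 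The main technical obstacle will be the commutativity check, which requires keeping cocycle normalizations and the $\zeta_n^*$-identifications consistent; everything else is essentially formal.
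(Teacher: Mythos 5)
Your proposal follows essentially the same route as the paper: identify the kernel of antisymmetrization with symmetric $2$-cocycles classifying abelian extensions (hence $\Ext^1_\Z(G,\mu_n)$), verify that $\alt$ is well defined and commutes with cup product, and obtain the two final claims from the resulting exact ladder. The only variations are cosmetic — you invoke K\"unneth and $\cd_\ell(\widehat{\Z}')=1$ where the paper constructs the splitting explicitly from a basis of characters and argues $\Ext^1_\Z(G,\mu_n)=0$ for torsion-free $G$ via a direct-limit-of-projectives argument — but the skeleton and the key identifications are the same.
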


\begin{proof}
The result appears in \cite[Lemma 2.2, Theorem 2.4]{Br07} with $\Q/\Z$ in place of $\tfrac 1n\Z/\Z$ and $\mu_n$.
It also follows from the Universal Coefficient Theorem.

We may assume $n$ is a power of a prime $\ell$ by primary decomposition.
The group $G$ is either isomorphic to $(\Z/n)^r$, where $r=d+1$ or $d$, 
or $(\widehat\Z')^d$ if $k$ is algebraically closed.

Viewing $\H^1(F,\tfrac 1n\Z/\Z)\otimes\H^1(F,\mu_n)$ as a subgroup of $\cZ^2(G,\mu_n)$,
extend $\alt$ to all of $\text{\rm Z}^2(G,\mu_n)$ by setting
$\alt(f)=f/\tilde f$ for all $f\in\cZ^2(G,\mu_n)$,
where $\tilde f(\sigma,\tau):=f(\tau,\sigma)$.
It is straightforward to verify that $\tilde f$ is a $2$-cocycle,
because $G$ is abelian and acts trivially on $\mu_n$.
Moreover, $f/\tilde f$ is indeed an alternating form on $G$.
For it is clear from the definition that 
$(f/\tilde f)(\sigma,\tau)=(f/\tilde f)(\tau,\sigma)^{-1}$,
and the alternating sum of the cocycle condition on $f$ applied successively
to the triples $(\rho,\sigma,\tau),(\rho,\tau,\sigma)$, and $(\tau,\rho,\sigma)$
shows that $f/\tilde f$ is linear on the left, hence bilinear
(see \cite[Section 1]{AT85}).

If $f\in\cC^1(G,\mu_n)$ then since $G$ is abelian and acts trivially on $\mu_n$,
$\partial f\in\cB^2(G,\mu_n)$ is in 
the subgroup $\cZ^2(G,\mu_n)_\sym$ of $\cZ^2(G,\mu_n)$ generated by elements $f:f=\tilde f$.
Therefore $[\alt]$ is well-defined on $\H^2(G,\mu_n)$, and we have a commutative ladder
\[\begin{tikzcd}
0\arrow[r]&K\arrow[d]\arrow[r]
&\H^1(F,\tfrac 1n\Z/\Z)\otimes\H^1(F,\mu_n)\arrow[d, "\cup"]\arrow[r,"\alt"]
&\Alt(G,\mu_n)\arrow[d,equals]\arrow[r]&0\\
0\arrow[r]&\H^2(G,\mu_n)_\sym\arrow[r]&\H^2(G,\mu_n)\arrow[r,"{[\alt]}"]&\Alt(G,\mu_n)\arrow[r]
&0
\end{tikzcd}\]
If $\{(x_i)_n\}$ is a basis for $\H^1(F,\mu_n)$, the
elements $\alt((x_i)_n^*\otimes(x_j)_n)$ 
for $i<j$ are easily seen to form a basis for $\Alt(G,\mu_n)$, hence $\alt$ splits.
Therefore $[\alt]$ splits.
The group $\H^2(G,\mu_n)_\sym$ describes the abelian group-extensions of $G$ by $\mu_n$,
hence it is isomorphic to $\Ext_\Z^1(G,\mu_n)$. 
If $G=(\Z/n)^r$, then $\Ext_\Z^1(G,\mu_n)=\Hom_\Z(\Z^r,\mu_n)\isom\mu_n^r$ is a standard
computation, see e.g. \cite[Ch.17]{DF}.
If $G$ is torsion-free then it is a direct limit of projective $\Z$-modules,
hence $\Ext_\Z^1(G,\mu_n)=0$,
and $[\alt]$ is an isomorphism.
Then the commutative ladder and the Snake Lemma show
$\H^2(G,\mu_n)$ is generated by elements $\theta\ccup(t)_n$,
for $\theta\in\H^1(F,\tfrac 1n\Z/\Z)$ and $(t)_n\in\H^1(F,\mu_n)$.
\end{proof}

\begin{Remark}
We study the connection between $\H^2(G,\mu_n)$ and $\H^2(F,\mu_n)$ in Section~\ref{naturalmaps}.
\end{Remark}

\subsection{Brauer Group of $F$}
\begin{Theorem}\label{Brgp}
Assume \eqref{setup}.
Then there is an exact sequence
\[0\lr{}_n\H^2(\G_k,F_\nr^\times)\lr\H^2(F,\mu_n)\lr\H^2(F_\nr,\mu_m)\lr 0\]
split by a choice of uniformizer group,
so that \[{}_n\Br(F)\isom(\tfrac 1n\Z/\Z)^{\oplus d}\oplus(\tfrac 1m\Z/\Z)^{\oplus d(d-1)/2}\]
Suppose $\chi_0\in {}_n\X(F)$ is unramified of order $n$, 
and $\{(x_1)_n,\dots,(x_d)_n\}$ is a uniformizer basis.
Then each $\alpha\in{}_n\Br(F)$ has a unique coordinate expression
\[\alpha\;=\;\sum_{j=1}^d n_{0j}(\chi_0,x_j)+\sum_{1\leq i<j\leq d}m_{ij}(x_i,x_j)_m\]
\end{Theorem}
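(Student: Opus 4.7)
The plan is to apply the Hochschild--Serre 7-term sequence from \ref{7term} to the extension $1 \to \G_{F_\nr} \to \G_F \to \G_k \to 1$ with coefficient module $F_\sep^\times$. Hilbert 90 kills each $\H^1$ appearing: $\H^1(F, F_\sep^\times) = \H^1(F_\nr, F_\sep^\times) = 0$ covers the first three entries, and $\H^1(\G_k, \H^1(F_\nr, F_\sep^\times)) = \H^1(\G_k, 0) = 0$ covers the seventh. What remains is
\[0 \lr \H^2(\G_k, F_\nr^\times) \lr \Br(F) \lr \Br(F_\nr)^{\G_k} \lr 0.\]
Restriction to $n$-torsion yields the exact sequence of the theorem, provided we identify ${}_n\Br(F_\nr)^{\G_k}$ with $\H^2(F_\nr, \mu_m)$ via the natural map induced by $\mu_m \hookrightarrow \mu_n$; this will fall out of the next computations.

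Next I would compute both outer terms. For the kernel, the valuation gives $1 \to U_{F_\nr} \to F_\nr^\times \to \Gamma_F \to 0$, split $\G_k$-equivariantly by any $F$-uniformizer, and $U_{F_\nr}$ filters as $1 \to 1 + \m_{F_\nr} \to U_{F_\nr} \to \overbar k^\times \to 0$. Off $p$, the principal units contribute nothing (they form a pro-$p$ group), while $\H^i(\G_k, \overbar k^\times) = 0$ for $i \geq 1$ by Hilbert 90 and $\Br(k) = 0$. Hence the prime-to-$p$ part $\H^2(\G_k, F_\nr^\times)' \isom \H^2(\widehat{\Z}, \Z^d) \isom (\Q/\Z)^d$, whose $n$-torsion is $(\tfrac 1n\Z/\Z)^d$. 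For the quotient, Lemma \ref{uct} applied to the strictly henselian field $F_\nr$ gives ${}_n\Br(F_\nr) \isom \Alt((\Z/n)^d, \mu_n)$, with $\G_k$ acting trivially on the Kummer group (the uniformizers are $F$-rational, so $F_\nr^\times/n \isom \Gamma_F/n$ is fixed) and via Frobenius on $\mu_n$ with fixed locus $\mu_m$. The invariants are $\Alt((\Z/m)^d, \mu_m) \isom (\tfrac 1m\Z/\Z)^{d(d-1)/2}$, which matches $\H^2(F_\nr, \mu_m)$ by another application of Lemma \ref{uct}.

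The final step is to exhibit the splitting using the uniformizer data. The classes $(\chi_0, x_j) = \chi_0 \cup (x_j)_n$ vanish on restriction to $F_\nr$ (since $\chi_0$ is unramified, hence inflated from $\G_k$), so they lie in the kernel; tracing through the isomorphism $\H^2(\G_k, F_\nr^\times)' \isom \H^2(\widehat{\Z}, \Gamma_F)$ identifies them with the generators of the $(\tfrac 1n\Z/\Z)^d$-summand indexed by $\v(x_j)$. The symbols $(x_i, x_j)_m$ for $1 \le i < j \le d$ restrict to the standard basis of $\Alt((\Z/m)^d, \mu_m)$, hence surject onto $\H^2(F_\nr, \mu_m)$. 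Together they split the sequence and yield both the structure theorem and uniqueness of the displayed coordinate expression. The principal obstacle is Step 2: computing $\H^2(\widehat{\Z}, \Z^d)$ in continuous cohomology (since $\Z$ is not torsion, one must dimension-shift via $0 \to \Z \to \Q \to \Q/\Z \to 0$ with $\H^i(\widehat{\Z}, \Q) = 0$ for $i \geq 1$), and then verifying that the resulting generators are precisely the classes $(\chi_0, x_j)$ under the connecting maps.
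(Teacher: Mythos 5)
Your argument is correct, and it reaches the exact sequence, the structure of $\,{}_n\Br(F)$, and the coordinate expression via the same overall skeleton as the paper (7-term sequence for $M = F_\sep^\times$, Lemma~\ref{uct} for the quotient, uniformizer-splitting for uniqueness). The genuine divergence is in how the kernel ${}_n\H^2(\G_k, F_\nr^\times)$ is computed. The paper stays inside $n$-torsion: it runs the 7-term sequence a second time with $M = \mu_n$, using $\H^q(\G_k,\mu_m)=0$ for $q\geq 2$ to get the natural isomorphism ${}_n\H^2(\G_k,F_\nr^\times)\isim\H^1(\G_k,\H^1(\G_{F_\nr},\mu_n))\isom\H^1(\G_k,\Gamma_F/n)$, which immediately hands over the unramified generators. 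You instead decompose $F_\nr^\times$ via the $\G_k$-split valuation sequence and the unit filtration, observe that $1+\m_{F_\nr}$ is uniquely $\ell$-divisible (Hensel, $\ell\neq p$) so it dies, that $\H^{\geq 1}(\G_k,\overbar k^\times)=0$, and then dimension-shift $\H^2(\widehat\Z,\Z^d)$ via $0\to\Z\to\Q\to\Q/\Z\to 0$. This is valid and arguably more transparent about where the $d$ free parameters come from (the value group), but it computes the full prime-to-$p$ cohomology before truncating to $n$-torsion and then requires more tracing to see that the generators land on $(\chi_0,x_j)$---a step you correctly flag as the main remaining obstacle, and which the paper's $\mu_n$-coefficient route renders almost automatic. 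One small point to shore up: the 7-term sequence alone does not produce the "$\lr 0$" you wrote on the right; surjectivity of $\res$ comes either from $\scd(\G_k)=2$ or, as both you and the paper ultimately do, from exhibiting the symbols $(x_i,x_j)_m$ hitting a generating set of $\H^2(F_\nr,\mu_m)$, so make that dependence explicit.
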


\begin{proof}
This is well-known, see e.g. \cite[Theorem 7.84]{TW15}.
Since $F_\nr/F$ is Galois, $\G_{F_\nr}$ is a closed normal subgroup of $\G_F$.
Let $\G_k=\Gal(F_\nr/F)$.
Since $\H^1(\G_{F_\nr},F_\sep^\times)=0$ by Hilbert 90,
the $n$-torsion of the 7-term sequence \eqref{7term} applied to $M=F_\sep^\times$
yields the inflation-restriction sequence
\[0\lr{}_n\H^2(\G_k,F_\nr^\times)\overset{\inf}{\lr}\H^2(F,\mu_n)
\overset{\res}{\lr}\H^2(F_\nr,\mu_n)^{\G_k}\]
By Lemma~\ref{uct}, $\H^2(F_\nr,\mu_n)$ is generated by
cyclic classes $(x)_n^*\ccup(y)_n$. 
If $\{(x_1)_n,\dots,(x_d)_n\}$ is a basis for a uniformizer subgroup
$T_n$ for $F^\times\!/n$, then since $T_n\isom F_\nr^\times/n$, 
every element of $\H^2(F_\nr,\mu_n)$ is uniquely expressible
in terms of the $(x_i)_n^*\ccup(x_j)_n=(x_i,x_j)_n$ with $1\leq i<j\leq d$.
Thus $\H^2(F_\nr,\mu_n)\isom\mu_n^{d(d-1)/2}$ by counting.
The explicit action of $\G_k$ shows this is a $\G_k$-module isomorphism.
Since $\mu_n^{\G_k}=\mu_m$,
it follows that 
$\H^2(F_\nr,\mu_n)^{\G_k} 
\isom\mu_m^{d(d-1)/2}$, and
we have an exact sequence
\[0\lr{}_n\H^2(\G_k,F_\nr^\times)\lr\H^2(F,\mu_n)\lr\H^2(F_\nr,\mu_m)\lr 0\]
with a splitting defined by the choice of $T_n\leq F^\times\!/n$.

Since $\H^q(\G_k,\mu_n)=0$ for $q\geq 2$,
the 7-term sequence \eqref{7term} with $M=\mu_n$ defines a natural isomorphism 
\[{}_n\H^2(\G_k,F_\nr^\times)\isim\H^1(\G_k,\H^1(\G_{F_\nr},\mu_n))
\]
An easy computation shows that the action of $\G_k$ on $\H^1(\G_{F_\nr},\mu_n)$ is trivial.
Therefore since $\H^1(\G_{F_\nr},\mu_n)\isom F_\nr^\times/n\isom\Gamma_F/n$, we find
${}_n\H^2(\G_k,F_\nr^\times)\isom\H^1(\G_k,\Gamma_F/n)\isom\br{\chi_0}^d$, 
where $\chi_0\in\X(k)$ is any character of order $n$.
A tracing through the maps shows the splitting $\br{\chi_0}^d\to{}_n\H^2(\G_k,F_\nr^\times)$ is induced by the choice
of basis $\{(x_1)_n,\dots,(x_d)_n\}$ for $T_n$, and sends the $j$-th copy of $\chi_0$
to $(\chi_0,x_j)$. Thus a general class is uniquely expressible as a sum
$\sum_{j=1}^d n_{0j}(\chi_0,x_j)$.
We conclude
$\H^2(F,\mu_n)\isom\mu_n^d\times\mu_m^{d(d-1)/2}$, and 
each $\alpha\in\H^2(F,\mu_n)$ is uniquely expressible in the form
\[\begin{matrix*}[l]
&n_{01}(\chi_0,x_1) &+ &n_{02}(\chi_0,x_2)&+&\cdots&+&n_{0d}(\chi_0,x_d)\\
& &+&m_{12}(x_1,x_2)_m&+&\cdots&+&m_{1d}(x_1,x_d)_m\\
& & & & & & &\qquad\vdots\\
& & & & & &+&m_{d-1d}(x_{d-1},x_d)_m
\end{matrix*}\]
\end{proof}

\section{Brauer Dimension and Cyclic Length}\label{Brdim}

\begin{Definition}
Let $F=(F,\v,k)$ be a valued field with residue field $k$.
\begin{enumerate}[(a)]
\item
The {\it tame cyclic length} 
of $F$ is the the minimum number of cyclic classes of degree $n$ 
needed to express all elements of ${}_n\Br(F)$, over all $n$ invertible in $k$.
It is zero if $\Br(F)=0$, and $\infty$ if no such number exists.
\item
The {\it tame Brauer dimension} of $F$ is the supremum 
of the smallest $d\geq 0$ such that 
$\ind(\alpha)$ divides $\per(\alpha)^d$ for all tame $\alpha\in\Br(F)$, or $\infty$ if no such number exists.
The relation $\ind(\alpha)\,|\,\per(\alpha)^d$ for all $\alpha$ is called a {\it period-index bound}.
\item
The {\it maximum period-index ratio in ${}_n\Br(F)$} is the supremum of $\ind(\alpha)/\per(\alpha)$
over all $\alpha\in{}_n\Br(F)$. It is zero if ${}_n\Br(F)=0$.
\end{enumerate}
\end{Definition}

\begin{Remark}
Since a class of period $n$ and $n$-cyclic length $d$ has index at most $n^d$,
Brauer dimension is bounded by cyclic length.
Cyclic length and Brauer dimension are sensitive to the presence of roots of unity.
For example, all tame division algebras over $\F_2((t_1))((t_2))((t_3))$ are cyclic of equal period and index, 
whereas, as the following example shows, $\F_4((t_1))((t_2))((t_3))$ has elements of cyclic length $2$
and unequal period and index.
\end{Remark}

The following example gives a lower bound, which turns out to be an upper bound.

\begin{Example}\label{max}
Assume \eqref{setup}, with $n$ a prime-power and $m\neq 1$. 
Let $\chi_0$ be an unramified character of order $n$, and $\{(x_1)_n,\dots,(x_d)_n\}$ a uniformizer basis. 
The algebra
\[D\;=\;\Delta(\chi_0,x_1)\otimes_F \Delta(x_2,x_3)_m\otimes_F \Delta(x_4,x_5)_m\otimes_F\cdots\otimes_F
\Delta(x_{2r},x_{2r+1})_m\]
where $r=\lfloor\frac{d-1}2\rfloor$,
is a division algebra of period $n$ and index $nm^r$, and cyclic length $r+1=\lfloor\frac{d+1}2\rfloor$,
by \cite[Corollary 2.6]{JW86} and \cite[Extension Lemma 1.6]{TW87}.
This example is well known.
\end{Example}

\begin{Theorem}
Assume \eqref{setup}.
\begin{enumerate}[\rm (a)]
\item
If $k=\F_2$, the tame Brauer dimension and cyclic length are both $1$,
and the maximum period-index ratio for ${}_n\Br(F)$ is $1$, for all (prime-to-$p$) $n$.
\item
If $k\neq\F_2$, the tame Brauer dimension and cyclic length are both $\lfloor\tfrac{d+1}2\rfloor$,
and the maximum period-index ratio in ${}_n\Br(F)$ is $m^{\lfloor\tfrac{d-1}2\rfloor}$, where 
$m=|\mu_n(k)|$, for all $n$.
\end{enumerate}
\end{Theorem}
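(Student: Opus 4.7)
The plan is to dispatch case (a) directly from Theorem~\ref{Brgp}, extract the lower bounds in case (b) from Example~\ref{max}, and then establish the upper bounds via structural reduction combined with the valuation-theoretic index formula of Section~\ref{value}. In case (a), $k = \F_2$ forces $m = \gcd(q-1, n) = 1$ for every $n$, so the symbol summand in Theorem~\ref{Brgp} vanishes and every $\alpha \in {}_n\Br(F)$ is a single cyclic class $(\chi_0, \prod_j x_j^{n_{0j}})$; such a class has index equal to period (the generator $(\chi_0, x_1)$ already realizes the maximum index $n$), so cyclic length, Brauer dimension, and maximum period-index ratio are all $1$.

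For case (b), the lower bounds follow from Example~\ref{max}: the algebra exhibited there has period $n$, index $n m^{\lfloor(d-1)/2\rfloor}$, and cyclic length $\lfloor(d+1)/2\rfloor$. For the Brauer dimension lower bound I would specialize $n$ to a prime $\ell \mid q-1$ (available since $q > 2$), making $m = n = \ell$ and giving a class of index $\ell^{\lfloor(d+1)/2\rfloor}$ and period $\ell$.

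For the upper bound on cyclic length, I would take $\alpha$ in the coordinate form of Theorem~\ref{Brgp} and encode the coefficients as a $(d+1)\times(d+1)$ skew-symmetric matrix (with the $0$-th row/column valued in $\tfrac{1}{n}\Z/\Z$ and the interior in $\tfrac{1}{m}\Z/\Z$), then $2$-block diagonalize via uniformizer substitutions $x_j \mapsto x_i x_j^c$ and character substitutions $\chi_0 \mapsto \chi_0 + c(x_i)_m^*$ to produce at most $\lfloor(d+1)/2\rfloor$ cyclic summands. The Brauer dimension bound $\ind \mid \per^{\lfloor(d+1)/2\rfloor}$ then follows automatically by applying Theorem~\ref{Brgp} with $n = \per(\alpha)$, since a product of $r$ cyclic algebras of degree dividing $\per(\alpha)$ has index dividing $\per(\alpha)^r$.

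For the sharper period-index ratio bound I would invoke the Tignol--Wadsworth decomposition recalled in Section~\ref{value}: write $D \sim S \otimes_F T$ with $S$ nicely semiramified and $T$ totally ramified, so that $\Gamma_S/\Gamma_F$ is cyclic, $\Gamma_T/\Gamma_F$ is symplectic over $\mu_m$, and $\Gamma_D/\Gamma_F = (\Gamma_S + \Gamma_T)/\Gamma_F \sub (\tfrac{1}{n}\Gamma_F)/\Gamma_F \isom (\Z/n)^d$. The rank bound $d$ forces $\Gamma_T/\Gamma_F$ to have at most $d-1$ generators whenever $S$ is nontrivial, hence $\ind(T) \mid m^{\lfloor(d-1)/2\rfloor}$; together with $\per(D) \geq \per(S) = \ind(S)$ (and a parallel treatment of the trivial-$S$ case using $\per(D) \geq m$), this yields $\ind(D)/\per(D) \leq m^{\lfloor(d-1)/2\rfloor}$. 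The hard part will be the $2$-block diagonalization above: it must respect the mixed $\Z/n$ versus $\Z/m$ structure of the skew-symmetric matrix, and for $d$ even a subtle combining trick is needed to absorb the $\chi_0$-contribution into one of the interior blocks and reduce $d/2 + 1$ summands to $d/2$.
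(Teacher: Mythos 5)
Your case (a) and your lower bounds match the paper, but both of your upper-bound arguments have genuine gaps. For the period-index ratio, the assertion that ``the rank bound $d$ forces $\Gamma_T/\Gamma_F$ to have at most $d-1$ generators whenever $S$ is nontrivial'' is false: ranks of finite abelian groups do not add under sums of subgroups. If $d$ is even, $\Gamma_T/\Gamma_F$ can have full rank $d$ (say $\isom(\Z/m)^d$) while $S$ is a nontrivial nicely semiramified algebra with $\Gamma_S/\Gamma_F=\langle\tfrac 1n\v(x_1)\rangle$; then $\Gamma_D/\Gamma_F\isom\Z/n\times(\Z/m)^{d-1}$ still has rank $d$. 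In that configuration your bound degrades to $\ind(D)\,|\,n\,m^{d/2}$, which exceeds the claimed $n\,m^{\lfloor(d-1)/2\rfloor}$ by a factor of $m$. The paper closes exactly this case by observing that when $\Gamma_T/\Gamma_F$ has full rank with minimum summand exponent $m_0$, the cyclic group $\Gamma_D/\Gamma_T\isom\Gamma_S/(\Gamma_S\cap\Gamma_T)$ has order dividing $n/m_0$, which restores $[D:F]\,|\,n^2m^{d-2}$. You need this extra step (or something equivalent) for $d$ even.

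For the cyclic length upper bound you propose to $2$-block diagonalize a mixed skew-symmetric matrix attached to an arbitrary element of ${}_n\Br(F)$ via substitutions $x_j\mapsto x_ix_j^c$. The paper itself demonstrates (Example~\ref{failure}, Theorem~\ref{unnatural}) that this procedure is \emph{not} functorial on all of ${}_n\Br(F)$ when $\ell=2$ and $n=2^{\v_2(q-1)}$: the substitution produces uncontrolled correction terms $(x_i,x_i)_n=(-1,x_i)_n\neq 0$ that the congruence transformation does not record, so the diagonalized matrix does not represent the class (in Example~\ref{failure} it even predicts the wrong index). The matrix formalism is only valid on ${}_n2\,\Br(F)$, so your argument as stated does not cover all of ${}_n\Br(F)$; the ``subtle combining trick'' you defer is in fact a documented obstruction. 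The paper's route is entirely different and sidesteps construction of any decomposition: given \emph{any} tensor decomposition of a division algebra into cyclic division algebras, at most one factor has an unramified subfield, the remaining factors are totally ramified with mutually disjoint rank-$2$ value groups inside $\tfrac 1n\Gamma_F/\Gamma_F$ (rank $d$), forcing at most $\lfloor\tfrac{d+1}2\rfloor$ factors in total. If you want to keep the constructive matrix route, you must either restrict to ${}_n2\,\Br(F)$ and handle the residual $2$-torsion classes separately, or track the $(-1,x_i)_n$ corrections explicitly.
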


\begin{proof}
The case $k=\F_2$ is immediate, since there are no tame totally ramified classes, so
${}_n\Br(F)=\{(\chi_0,t):t\in F^\times/\Nm(F(\chi_0)^\times)\}$ for an unramified
character $\chi_0$ of order $n$. 

Now suppose $k\neq\F_2$, so that $F$ has a nontrivial tame root of unity, and $m\neq 1$ for some 
prime-to-$p$ number $n$.
We may assume without loss of generality that $n$ is a prime power.

Cyclic length. 
Suppose $D\isom D_0\otimes_F D_1\otimes_F\cdots\otimes_F D_r$ is a decomposition into cyclic
$F$-division algebras of period dividing $n$.
Since $k$ is finite, $\overbar D/k$ is a finite cyclic field extension,
and since disjoint division algebras cannot have common subfields,
at most one of the $D_i$, say $D_0$, has a (nontrivial) unramified subfield, which maps onto $\overbar D$.
If $D$ has no unramified subfields, we set $D_0=F$.
Let $T=D_1\otimes_F\cdots\otimes_F D_r$.
Since $T$ is tame and has no unramified subfields, it is totally ramified. 
The valuation $\v$ extends uniquely to each $D_i$, since $F$ is henselian,
and for $i\geq 1$ the $\Gamma_{D_i}/\Gamma_F$
are all rank $2$ and mutually disjoint, so the rank of $\Gamma_T/\Gamma_F$ is $2r$.
Since $\Gamma_D\subset\tfrac 1n\Gamma_F$, the rank of $\Gamma_D/\Gamma_F$ is at most $d$,
so $2r\leq d$.
Since $D_0$ is not inertial, $\Gamma_{D_0}/\Gamma_F$ has rank at least $1$, 
so $D_0$ can only contribute to a maximum cyclic length if $d$ is odd, and then we compute
the maximum is $1+(d-1)/2=(d+1)/2$. If $d$ is even, then $D_0$ isn't necessary to achieve
the maximum, which is $r=d/2$. 
Thus in any case $\lfloor\tfrac{d+1}2\rfloor$ is an upper bound to the cyclic length,
and this is realized by Example~\ref{max}. 

Brauer dimension.
Let $D$ be an $F$-division algebra of period $n$, and as in Section~\ref{value}, let 
$S$ and $T$ be semiramified and totally ramified division algebras such that $D\sim S\otimes_F T$,
$\Gamma_D=\Gamma_S+\Gamma_T$, and $[D:F]=[\Gamma_D:\Gamma_T]^2[\Gamma_T:\Gamma_F]$.
Since $k$ is finite and 
$S$ is semiramified, $S$ is cyclic of degree dividing $n$, and $\Gamma_D/\Gamma_T$ is cyclic of order dividing $n$.
The group $\Gamma_T/\Gamma_F$ has even rank at most $d$, the rank of $\tfrac 1m\Gamma_F/\Gamma_F$.
If the rank equals $d$, and the minimum exponent of a summand is $m_0$, then $\Gamma_D/\Gamma_T$
has order dividing $n/m_0$, hence $[D:F]$ divides $n^2 m^{d-2}$, with $d$ even.
If the rank is less than $d$, then we could have $\Gamma_D/\Gamma_T=n$, and $\Gamma_T/\Gamma_F$ has
order dividing $m^{d-1}$ if $d$ is odd, and order dividing $m^{d-2}$ if $d$ is even.
Therefore $[D:F]$ divides $n^2 m^{d-1}$ if $d$ is odd, $n^2 m^{d-2}$ if $d$ is even.
In any case, we find
$\ind(D)$ divides $nm^{\lfloor\tfrac{d-1}2\rfloor}$,
which is the lower bound of Example~\ref{max}. We compute from this that the 
(tame) Brauer dimension is $\lfloor\tfrac{d+1}2\rfloor$, and the
maximum period-index ratio is thus $m^{\lfloor\tfrac{d-1}2\rfloor}$, as claimed.
Note the latter remains true if $m=1$.
\end{proof}

\section{Wedge Product}

Since ${}_n\Br(F)$ is generated by cyclic classes, the cup product map
\[{}_n \H^1(F,\tfrac 1n\Z/\Z)\otimes\cZ^1(F,\mu_n)\lr{}_n \Br(F)\]
is surjective.
In this section we show the extent to which the cokernel is a wedge product. 

\begin{Example}\label{fb}
Assume \eqref{setup}, with $n$ a power of $\ell$.
Let $\chi_0$ be the Frobenius character of order $n$,
set $x_0^{1/n}=\zeta_{n(q-1)}^{q}$, and let $\{x_1^{1/n},\dots,x_d^{1/n}\}$
be a uniformizer basis for $F^{\times 1/n}/F^\times$.
If we put $\chi_i=(x_i)_m^*$ for $i\geq 1$ then 
$\un\chi=\{\chi_0,\dots,\chi_d\}$ is a basis for ${}_n\X(F)$ by Theorem~\ref{chargp}, and
$\un x=\{x_0^{1/n},\dots,x_d^{1/n}\}$ is a basis for $\cZ^1(F,\mu_n)$ by 
Section \eqref{unifsbg}.
In the following we will use two properties of this basis:
\begin{enumerate}[\rm (a)]
\item\label{fba}
$(\chi_0,x_0)=0$ 
\item\label{fbb}
$\tfrac nm\chi_0=(x_0)_m^*$ and $\chi_i=(x_i)_m^*$
\end{enumerate}
\eqref{fba} is by Wedderburn's Theorem, since $(\chi_0,x_0)$ is
defined over the finite field $k$.
The first part of \eqref{fbb} is because $\tfrac nm\chi_0$ and $(x_0)_m^*$ are defined over $k$,
and agree on the Frobenius automorphism, the second part is by definition.

\end{Example}

\begin{Theorem}\label{tt}
Assume \eqref{setup}, with $n$ a power of $\ell$.
Let $\un\chi\times\un x$ be a basis satisfying \eqref{fba},\eqref{fbb} of Example~\ref{fb}, 
put
\[
\theta=\sum_{i=0}^d b_i\chi_i\in{}_n\X(F)\quad\text{and}\quad
t^{1/n}=\prod_{i=0}^d x_i^{c_i/n}\in\cZ^1(F,\mu_n) 
\]
and assume $|t^{1/n}|=n$. Consider the equations
\begin{equation}\label{eqns}
b_0 c_j=\tfrac nm c_0 b_j\pmod n\quad\text{and}\quad b_i c_j\equiv c_i b_j\pmod m\quad\forall\;i,j\geq 1
\end{equation}
\begin{enumerate}[\rm(a)]
\item\label{tta}
If $\theta\in{}_n2\,\X(F)$ or $|\theta|\neq m$, then \eqref{eqns} is equivalent to $(\theta,t)=0$.
\item\label{ttb}
If \eqref{eqns} holds, then $(\theta,t)= 0$ is equivalent to
$\theta\in{}_n2\,\X(F)$ or $|\theta|\neq m$.
\item\label{ttc}
Let $g=\gcd(|\theta|,m)$. 
Then \eqref{eqns} implies $\langle\tfrac{|\theta|}g\theta\rangle=\br{(t)_g^*}$. 
\end{enumerate}
\end{Theorem}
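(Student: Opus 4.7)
My approach is to expand $(\theta,t)=\theta\cup(t)_n$ by bilinearity as $\sum_{i,j}b_ic_j(\chi_i,x_j)$, then use properties \eqref{fba} and \eqref{fbb} of Example~\ref{fb} to rewrite each symbol in terms of the basis of ${}_n\Br(F)$ from Theorem~\ref{Brgp}: $(\chi_0,x_0)=0$, $(\chi_i,x_0)=-\tfrac{n}{m}(\chi_0,x_i)$ for $i\geq 1$, and $(\chi_i,x_j)=(x_i,x_j)_m$ for $i,j\geq 1$. After collecting like terms,
\[
(\theta,t)=\sum_{j=1}^{d}\bigl(b_0c_j-\tfrac{n}{m}c_0b_j\bigr)(\chi_0,x_j)+\sum_{1\leq i<j}(b_ic_j-b_jc_i)(x_i,x_j)_m+\sum_{i=1}^{d}b_ic_i(x_i,x_i)_m.
\]
By the uniqueness of the coordinate representation in Theorem~\ref{Brgp}, $(\theta,t)=0$ reduces to the vanishing of these three families of coefficients, modulo the resolution of the diagonal terms $(x_i,x_i)_m$ in the basis.

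When $m$ is odd, $(x_i,x_i)_m=0$ by Albert's criterion (Section~\ref{cyclic}), so \eqref{eqns} is directly equivalent to $(\theta,t)=0$; moreover $\ell$ is odd, so $\theta\in 2\X(F)$ is automatic and parts (a), (b) follow trivially. When $m$ is even ($\ell=2$), I use $(x_i,x_i)_m=(x_i,-1)_m$; since $-1=x_0^{(q-1)/2}$ and $v_2((q-1)/2)=v_2(m)-1$ (as $m$ is the full $2$-part of $q-1$), I compute $(-1)_m=(m/2)(x_0)_m$, so $(x_i,-1)_m=-\tfrac{n}{2}(\chi_0,x_i)$. The coefficient of $(\chi_0,x_j)$ in $(\theta,t)$ then becomes $b_0c_j-\tfrac{n}{m}c_0b_j-\tfrac{n}{2}b_jc_j$, so $(\theta,t)=0$ is equivalent to \eqref{eqns} \emph{plus} the extra diagonal conditions $b_jc_j\equiv 0\pmod 2$ for $j\geq 1$.

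For (a) with $m$ even, I verify the diagonal conditions follow case-by-case. If $\theta\in 2\X(F)$, then each $b_i$ ($i\geq 1$) is even since $\chi_i\notin 2\X(F)$ (as $m$ is the full $2$-part of $q-1$), so $b_jc_j$ is even. If $|\theta|>m$, then $b_0$ is not a multiple of $n/m$; reducing the first equation modulo $n/m$ gives $b_0c_j\equiv 0\pmod{n/m}$, and were $c_j$ odd it would be invertible modulo the $2$-power $n/m\geq 2$, forcing $n/m\mid b_0$, a contradiction. If $|\theta|<m$, then $\gcd(b_i,m)>1$ for all $i\geq 1$, so every such $b_i$ is even. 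For (b), assuming \eqref{eqns} with $\theta\notin 2\X(F)$ and $|\theta|=m$, I pick $b_k$ odd for some $k\geq 1$; the second equation gives $c_j\equiv \lambda b_j\pmod m$ for $\lambda:=c_kb_k^{-1}\bmod m$. If $\lambda$ were even, all $c_j$ with $j\geq 1$ would be even, and combining the first equation with $b_0=(n/m)b_0'$ (forced by $|\theta|=m$) would yield $c_0$ even too, contradicting $\gcd(c_0,\dots,c_d,n)=1$ which follows from $|t^{1/n}|=n$. Hence $\lambda$ is odd, so $b_kc_k$ is odd, giving $(\theta,t)\neq 0$.

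For (c), I compute $\tfrac{|\theta|}{g}\theta$ and $(t)_g^*$ in $\H^1(F,\tfrac{1}{g}\Z/\Z)$ using the identifications $(x_0)_g^*=(n/g)\chi_0$ and $(x_i)_g^*=(m/g)\chi_i$ for $i\geq 1$. Setting $B_0:=|\theta|b_0/n$ and $B_i:=|\theta|b_i/m$ (integers since $|\theta|\theta=0$), the two characters have coordinate vectors $(B_0,\dots,B_d)$ and $(c_0,\dots,c_d)$ modulo $g$ on the basis $\{\Phi_0,\dots,\Phi_d\}$. Multiplying \eqref{eqns} through by $|\theta|/g$ translates into the relations $B_ic_j\equiv B_jc_i\pmod g$ for all $i,j$, exactly the condition that the two vectors are scalar multiples modulo $g$. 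Since $\tfrac{|\theta|}{g}\theta$ has order exactly $g$, the vector $(B_i)$ is primitive modulo $g$; primitivity of $(c_i)$ modulo $n$ (hence modulo $g$) then forces the proportionality constant to be a unit, yielding $\langle\tfrac{|\theta|}{g}\theta\rangle=\langle(t)_g^*\rangle$. The principal obstacle throughout is the $m$ even case, where the nontrivial diagonal symbols $(x_i,x_i)_m$ account simultaneously for the appearance of $2\X(F)$ in the statement and for the essential role played by the primitivity hypothesis $|t^{1/n}|=n$.
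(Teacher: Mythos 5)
Your bilinear expansion of $(\theta,t)$ and the reduction to coefficient conditions via the unique coordinate expression of Theorem~\ref{Brgp} is exactly the paper's argument, and your part (c) (proportionality of primitive coordinate vectors mod $g$) is a valid and arguably cleaner variant of the paper's extendability argument. But your treatment of the diagonal terms has two genuine gaps, both in the $\ell=2$ case. First, the identity $(-1)_m=\tfrac m2(x_0)_m$, on which everything rests, holds only when $m=2^{\v_2(q-1)}$; your parenthetical ``as $m$ is the full $2$-part of $q-1$'' is an assumption, not a consequence of $m$ being even. Since $m=\gcd(q-1,n)$, one can have $\ell=2$ with $m=n<2^{\v_2(q-1)}$ (e.g.\ $q=17$, $n=4$); then $m\mid\tfrac{q-1}2$, so $-1\in(F^\times)^m$, $(-1)_m=0$, and $(x_i,x_i)_m=0$. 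In that sub-case your formula is false, your justification ``$\chi_i\notin 2\,\X(F)$'' fails (every $\chi_i$ then lies in $2\,\X(F)$), and your framework would wrongly impose the conditions $b_jc_j\equiv 0\pmod 2$ — e.g.\ it would predict $(x_1,x_1)_4\neq 0$ over $\F_{17}((t_1))\cdots((t_d))$. The repair is one sentence (in that sub-case the diagonal terms vanish and the argument is as for odd $\ell$), but it must be said; the paper avoids the issue by splitting on $\theta\in{}_n2\,\X(F)$ versus not, since $\theta\notin{}_n2\,\X(F)$ \emph{forces} $m=2^{\v_2(q-1)}$.

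Second, the asserted biconditional ``$(\theta,t)=0$ is equivalent to \eqref{eqns} plus $b_jc_j\equiv0\pmod 2$'' is false: the true condition is that the combined coefficient $b_0c_j-\tfrac nm c_0b_j-\tfrac n2 b_jc_j$ vanish mod $n$, and the two pieces can cancel (take $n=4$, $m=2$, $b_0=2$, $b_1=c_1=1$, $c_0=0$: then $(\theta,t)=0$ while \eqref{eqns} fails — harmless for the theorem, since there $\theta\notin 2\,\X(F)$ and $|\theta|=m$, but fatal to your stated equivalence). This creates a real hole in part (a), in the case $|\theta|>m$ with $\theta\notin 2\,\X(F)$: for the direction $(\theta,t)=0\Rightarrow\eqref{eqns}$ you derive the diagonal condition \emph{from the first equation of \eqref{eqns}}, which is circular in that direction. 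The fix is available with your own tools: reduce the vanishing of the combined coefficient modulo $n/m$; since $m\geq 2$, both $\tfrac nm c_0b_j$ and $\tfrac n2 b_jc_j$ are $\equiv 0\pmod{n/m}$, so $b_0c_j\equiv 0\pmod{n/m}$, and your unit argument then gives $c_j$ even, after which no cancellation can occur. (The paper does the analogous step by multiplying $(\theta,t)$ by $m$.)
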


\begin{proof}
Compute using properties \eqref{fba},\eqref{fbb} of Example~\ref{fb}, 
\begin{align*}
(\theta,t)&=(\sum_{i=0} b_i\chi_i,\,\prod_{i=0} x_i^{c_i})
=(b_0\chi_0,\,\prod_{i=0} x_i^{c_i})+(\sum_{i=1} b_i\chi_i,\,x_0^{c_0})+
(\sum_{i=1} b_i\chi_i,\,\prod_{i=1} x_i^{c_i})\\
&=\sum_{i=1}(b_0 c_i-\tfrac nm c_0 b_i)(\chi_0,x_i)+\sum_{1\leq i<j}(b_i c_j-b_j c_i)(x_i,x_j)_m
+\sum_{i=1}b_i c_i(x_i,x_i)_m
\end{align*}
If $\theta\in{}_n2\,\X(F)$ then $b_i c_i(x_i,x_i)_m=0$ for each $i\geq 1$
by \eqref{cyclic} and Albert's criterion, 
so $(\theta,t)=0$ if and only if \eqref{eqns} holds.

Suppose $\theta\not\in{}_n2\,\X(F)$, so $\ell=2$, $m=2^{\v_2(q-1)}$ 
divides $|\theta|$, and $2\nmid b_i$ for some $i\geq 1$.
If $|\theta|\neq m$ then $|\theta|=|b_0\chi_0|>m$,
so $n/m\nmid b_0$, and \eqref{eqns} implies $2|\, c_i$ for each $i\geq 1$, hence $(\theta,t)=0$.
Conversely, if $|\theta|\neq m$ and $(\theta,t)=0$,
then $m(\theta,t)=\sum_{i=1}mb_0 c_i(\chi_0,x_i)=0$,
and since $n/m\nmid b_0$, and $(\chi_0,x_i)$ has order $n$,
we must have $2|\, c_i$ for each $i\geq 1$. Therefore $\sum_{i=1}b_i c_i(x_i,x_i)_m=0$,
and \eqref{eqns} holds.
This proves \eqref{tta}.

Assume \eqref{eqns}. If $\theta\in{}_n2\,\X(F)$ or $|\theta|\neq m$ then $(\theta,t)=0$ by \eqref{tta}.
Conversely, if $(\theta,t)=0$, then because of \eqref{eqns},
$b_ic_i(x_i)_m^*\in{}_n2\,\X(F)$ for each $i\geq 1$ by Albert's criterion, and
\begin{equation}\label{always}
c_i\theta-b_i(t)_m^*=(b_0 c_i-\tfrac nm c_0 b_i)\chi_0+\sum_{j=1}(b_j c_i-b_ic_j)(x_j)_m^*=0\qquad (i\geq 1)
\end{equation}
If $(\theta,t)=0$ and $|\theta|=m$, then since $t^{1/n}$ has order $n$, $|\theta|=|(t)_m^*|=m$
hence $b_i, c_i$ are units by \eqref{always}. Since $b_i c_i(x_i)_m^*\in{}_n2\,\X(F)$, 
we must have $m\neq 2^{\v_2(q-1)}$,
hence $\theta\in{}_n2\,\X(F)$. This proves \eqref{ttb}.

To prove \eqref{ttc}, suppose \eqref{eqns}, then
\[c_0\tfrac nm\theta-b_0(t)_m^*=(c_0 b_0-b_0 c_0)\tfrac nm\chi_0
+\sum_{j=1}(\tfrac nm c_0 b_j-b_0 c_j)(x_j)_m^*=0\]
If $\ell\nmid b_i$ for some $i\geq 0$,
then $\theta$ is not extendable in ${}_n\X(F)$, and $b_i(t)_m^*$ has order $m$, 
hence $\langle\frac{|\theta|}m\theta\rangle=\br{(t)_m^*}$ by \eqref{always}.
If $\theta$ extends to a non-extendable $\theta'$ in ${}_n\X(F)$,
then $\tfrac mg\tfrac{|\theta'|}m\theta'=\tfrac{|\theta|}g\theta$,
hence $\langle\tfrac{|\theta|}g\theta\rangle=\langle(t)_g^*\rangle$.
Therefore \eqref{eqns} implies $\langle\tfrac{|\theta|}g\theta\rangle=\langle(t)_g^*\rangle$ in any case.
\end{proof}

\begin{Remark}\label{linind}
The relations \eqref{eqns} are that the matrix
\[\bmx{\theta\\t^{1/n}}=\bmx{b_0&\tfrac nmb_1&\cdots&\tfrac nm b_d\\c_0&c_1&\cdots&c_d}\;\pmod n\]
have rank one, a linear dependence condition.
By Theorem~\ref{tt}\eqref{tta}, this ``linear dependence'' 
is not by itself enough to imply $(\theta,t)=0$.
For example, if $m=2^{\v_2(q-1)}$, $t^{1/n}$ is a uniformizer, and $\theta=(t)_m^*$,
then \eqref{eqns} holds, but $(t,t)_m\neq 0$.
Therefore
to achieve our goal of casting ${}_n\Br(F)$ as a wedge product, we will restrict to the subgroup
${}_n2\,\Br(F)\leq {}_n\Br(F)$
defined by characters in ${}_n2\,\X(F)$. To that end we have the following corollary.
\end{Remark}

\begin{Corollary}\label{tt'}
Assume \eqref{setup}, and $\un\chi\times\un x$ is a basis for ${}_n2\,\X(F)\times\cZ^1(F,\mu_n)$ 
satisfying 
\begin{enumerate}[\rm(a$'$)]
\item\label{fba'}
$(\chi_0,x_0)=0$
\item\label{fbb'}
$\tfrac{|\chi_i|}{g_i'}\chi_i=(x_i)_{g_i'}^*$, where $g_i'=\gcd(|\chi_i|,m')$, for $i\geq 0$.
\end{enumerate}
Suppose
$\ds{
\theta=\sum_{i=0}^d b_i\chi_i\in{}_n2\,\X(F)}$ and $\ds{t^{1/n}=\prod_{i=0}^d x_i^{c_i/n}\in\cZ^1(F,\mu_n) 
}$,
with $|t^{1/n}|=n$. Then $(\theta,t)=0$ if and only if
\begin{equation}\label{eqns'}
b_0 c_j=\tfrac n{m'} c_0 b_j\pmod n\quad\text{and}\quad b_i c_j\equiv c_i b_j\pmod{m'}\qquad
\forall\,i,j\geq 1
\end{equation}
If $g'=\gcd(m',|\theta|)$, then \eqref{eqns'} implies $\langle\frac{|\theta|}{g'}\theta\rangle=\langle(t)_{g'}^*\rangle$. 
\end{Corollary}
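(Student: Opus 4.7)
The plan is to imitate the proof of Theorem~\ref{tt} in its ``easy case'' $\theta\in{}_n2\,\X(F)$, but with the stronger standing hypothesis that every basis element $\chi_i$ itself lies in ${}_n2\,\X(F)$. Two uniform simplifications then occur. First, because each ramified basis element for $i\geq 1$ satisfies $|\chi_i|\mid m'$ and so (b$'$) forces $\chi_i=(x_i)^*_{|\chi_i|}$, every diagonal $(\chi_i,x_i)=(x_i,x_i)_{|\chi_i|}=(x_i,-1)_{|\chi_i|}=(\chi_i,-1)=0$ by the cyclic identities in Section~\ref{cyclic} together with Albert's criterion applied to $\chi_i\in{}_n2\,\X(F)$. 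This removes the case split at $|\theta|=m$ that complicated Theorem~\ref{tt}(a), giving a single clean equivalence. Second, $m$ is systematically replaced by $m'$, which is by construction the largest modulus at which $(\cdot)^*_m$ lands in $2\,\X(F)$; with this shift, the Kummer identities used in Theorem~\ref{tt} carry over verbatim.

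Concretely, I would bilinearly expand $(\theta,t)=\sum_{i,j}b_ic_j(\chi_i,x_j)$ and analyze the four blocks $(0,0),(0,j),(i,0),(i,j)$ for $i,j\geq 1$. The $(0,0)$ term vanishes by (a$'$), and every diagonal with $i\geq 1$ vanishes by the argument above. For the mixed $(0,j)$ and $(i,0)$ contributions, use (b$'$) to convert $(\chi_0,x_j)$ and $(\chi_i,x_0)$ into symbol classes $(x_0,x_j)_{m'}$ and $(x_i,x_0)_{m'}$ via the multipliers $|\chi_0|/g_0'$ and $|\chi_i|/g_i'$; antisymmetry of the symbol then collapses these into the single coefficient $b_0c_j-\tfrac{n}{m'}c_0b_j$ of $(\chi_0,x_j)$. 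The remaining $(i,j)$ terms with $i,j\geq 1$ similarly become $(b_ic_j-b_jc_i)(x_i,x_j)_{m'}$ after antisymmetrizing $i<j$. By the uniqueness of coordinates in Theorem~\ref{Brgp} (via the identification $(x_i,x_j)_{m'}=\tfrac{m}{m'}(x_i,x_j)_m$), vanishing of $(\theta,t)$ is equivalent to the two families of congruences \eqref{eqns'}.

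For the second assertion, assume \eqref{eqns'} and form the auxiliary characters $c_0\tfrac{n}{m'}\theta-b_0(t)_{m'}^*$ and $c_i\theta-b_i(t)_{m'}^*$ for $i\geq 1$. Expanding each via (b$'$) and \eqref{eqns'} shows they vanish in ${}_n\X(F)$, exhibiting a proportionality between $\theta$ and $(t)_{m'}^*$ at their common torsion level $g'=\gcd(m',|\theta|)$. I would then close the argument exactly as in Theorem~\ref{tt}\eqref{ttc}: split on whether $\theta$ is extendable in ${}_n\X(F)$; if not, the relation descends directly to $\langle\tfrac{|\theta|}{g'}\theta\rangle=\langle(t)_{g'}^*\rangle$; if so, extend $\theta$ to a non-extendable $\theta'$ and use $\tfrac{m'}{g'}\cdot\tfrac{|\theta'|}{m'}\theta'=\tfrac{|\theta|}{g'}\theta$ to conclude.

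The main delicacy is bookkeeping the multipliers $|\chi_i|/g_i'$ from (b$'$), particularly when $\chi_0$ has order strictly less than $n$ or $g_0'<m'$, since cup products must be matched at a common modulus before equating coefficients; one must also carefully track the modulus shift between $m$ (in Theorem~\ref{Brgp}) and $m'$ (in the corollary). Once these are tracked, the entire computation is formally identical to that of Theorem~\ref{tt}(a), and the divisibility statement drops out as in Theorem~\ref{tt}(c).
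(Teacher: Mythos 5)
Your proposal is correct and follows essentially the same route as the paper: the paper's proof of this corollary simply reruns the first paragraph of Theorem~\ref{tt}'s computation with $m'$ in place of $m$, noting that the diagonal terms now vanish outright because every $\chi_i$ lies in $2\,\X(F)$ (Albert's criterion), and then repeats the extendability dichotomy of Theorem~\ref{tt}(c) verbatim for the divisibility statement. Your spelled-out treatment of the multipliers $|\chi_i|/g_i'$ and of the modulus shift $m\to m'$ is just a more explicit rendering of the same argument.
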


\begin{proof}
The proof that $(\theta,t)=0$ is equivalent to \eqref{eqns'} is exactly as in
paragraph one of Theorem~\ref{tt}'s proof, with $m'$ in place of $m$,
using (\ref{fba'}$'$) and (\ref{fbb'}$'$), and noting that 
$\theta\in{}_n2\,\X(F)$ by hypothesis.
Similarly, if \eqref{eqns'} holds then imitating the proof of Theorem~\ref{tt}, we have
\begin{align*}
c_0\tfrac n{m'}\theta-b_0(t)_{m'}^*&=(c_0 b_0-b_0 c_0)\tfrac n{m'}\chi_0
+\sum_{j=1}(\tfrac n{m'} c_0 b_j-b_0 c_j)(x_j)_{m'}^*=0\\
c_i\theta-b_i(t)_{m'}^*&=(b_0 c_i-\tfrac n{m'} c_0 b_i)\chi_0+\sum_{j=1}(b_j c_i-b_ic_j)(x_j)_{m'}^*=0\qquad (i\geq 1)
\end{align*}
If $\ell\nmid b_i$ for some $i\geq 0$,
then $\theta$ is not extendable in ${}_n2\,\X(F)$, and $b_i(t)_{m'}^*$ has order $m'$, 
hence $\langle\frac{|\theta|}{m'}\theta\rangle=\br{(t)_{m'}^*}$.
If $\theta$ extends to a non-extendable $\theta'$ in ${}_n2\,\X(F)$,
then $\tfrac {m'}{g'}\tfrac{|\theta'|}{m'}\theta'=\tfrac{|\theta|}{g'}\theta$,
hence $\langle\tfrac{|\theta|}{g'}\theta\rangle=\langle(t)_{g'}^*\rangle$.
Therefore \eqref{eqns'} implies $\langle\tfrac{|\theta|}{g'}\theta\rangle=\langle(t)_{g'}^*\rangle$ in any case.
\end{proof}

\begin{Definition}\label{wedge}
Assume \eqref{setup}, with $n$ a power of $\ell$.
\begin{enumerate}[\rm(a)]
\item\label{wedgea}
$\theta\in{}_n2\,\X(F)$ and $t^{1/n}\in\cZ^1(F,\mu_n)$ are {\it matched}
if they satisfy \eqref{eqns'} with respect to a basis satisfying
(\ref{fba'}$'$) and (\ref{fbb'}$'$) of Corollary~\ref{tt'}, and we have the normalization
$\tfrac{|\theta|}{g'}\theta=(t)_{g'}^*$, where $g'=\gcd(|\theta|,m')$,
which in the above notation is equivalent to 
\[b_0\equiv\tfrac n{|\theta|}c_0\pmod{\tfrac{ng'}{|\theta|}}
\quad\text{and}\quad\tfrac{|\theta|}{g'}b_i\equiv\tfrac{m'}{g'}c_i\pmod{m'}\quad(i\geq 1)\]
\item\label{wedgeb}
Let $C\leq {}_n2\,\X(F)\otimes\cZ^1(F,\mu_n)$ be the subgroup generated by elements $\theta\otimes t^{1/n}$
such that $\theta$ and $t^{1/n}$ are matched, and define 
\[{}_n2\,\X(F)\wedge\cZ^1(F,\mu_n):=\frac{{}_n2\,\X(F)\otimes\cZ^1(F,\mu_n)}{C}\]
Write $\theta\wedge t^{1/n}$ in place of $\theta\otimes t^{1/n}+C$.
\end{enumerate}
\end{Definition}

The definition of matched pairs $(\theta,t^{1/n})$ is independent of the basis used in \eqref{eqns'},
since \eqref{eqns'} is equivalent to $(\theta,t)=0$ by Corollary~\ref{tt'}.

\begin{Theorem}\label{cocycleskim}
Assume \eqref{setup}, with $n$ a power of a prime $\ell$.
The cup product map ${}_n2\,\X(F)\otimes\cZ^1(F,\mu_n)\lr{}_n2\,\Br(F)$ has kernel $C$, and
induces a natural isomorphism
\[{}_n2\,\X(F)\wedge\cZ^1(F,\mu_n)\overset{\sim}\lr{}_n2\,\Br(F)\]
\end{Theorem}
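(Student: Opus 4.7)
The plan is to split the isomorphism into two halves: surjectivity of the cup product onto ${}_n2\,\Br(F)$, and identification of the kernel with $C$. Well-definedness of the map $\theta\otimes t^{1/n}\mapsto(\theta,t)$ is immediate: for $\theta\in{}_n2\,\X(F)$ we may write $\theta=2\theta'$ in $\X(F)$, so $(\theta,t)=2(\theta',t)\in 2\,\Br(F)\cap{}_n\Br(F)={}_n2\,\Br(F)$. Naturality in $F$ then comes for free from naturality of the cup product and of the Kummer isomorphism $\partial$.

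For surjectivity I would start from the coordinate decomposition of Theorem~\ref{Brgp}, $\alpha=\sum_{j}n_{0j}(\chi_0,x_j)+\sum_{i<j}m_{ij}(x_i,x_j)_m$. The Frobenius character $\chi_0$ already lies in $2\,\X(F)$ because $\X(k)\isom\Q/\Z$ is $2$-divisible, so the inertial summands are in the image. The totally ramified summands split into cases. When $m'=m$, the characters $(x_i)_m^*$ already lie in ${}_n2\,\X(F)$ (by Albert's criterion applied in \eqref{cyclic}). When $m=2^{\v_2(q-1)}$ we have $m'=m/2$ and $(x_i)_m^*\notin 2\,\X(F)$, but membership of $\alpha$ in ${}_n2\,\Br(F)$ forces each offending coefficient $m_{ij}$ to be even; writing $m_{ij}=2m_{ij}'$ converts the term into $m_{ij}'(x_i,x_j)_{m'}$, whose first slot $(x_i)_{m'}^*$ does lie in ${}_n2\,\X(F)$.

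For the kernel, the inclusion $C\subseteq\ker(\cup)$ is immediate from Corollary~\ref{tt'}: a matched pair satisfies \eqref{eqns'}, hence pairs to zero. The reverse inclusion is the main step. Fixing a basis $\un\chi\times\un x$ for ${}_n2\,\X(F)\times\cZ^1(F,\mu_n)$ satisfying (a$'$,b$'$) of Corollary~\ref{tt'}, I would take an arbitrary element $\xi=\sum_k\theta_k\otimes t_k^{1/n}$ mapping to zero, use bilinearity of the tensor product to collect contributions along the basis into a canonical reduced form, and then apply Corollary~\ref{tt'} coordinate-wise to exhibit $\xi$ as a sum of matched single tensors modulo $C$. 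As a sanity check and auxiliary strategy, I would verify by counting, using Theorem~\ref{chargp} and Section~\ref{unifsbg} to compute the order of ${}_n2\,\X(F)\wedge\cZ^1(F,\mu_n)$ directly from Definition~\ref{wedge}, and comparing with the order of ${}_n2\,\Br(F)$ read off from Theorem~\ref{Brgp}; agreement of these orders, together with surjectivity, pins down the kernel.

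The hard part is this reverse inclusion. Corollary~\ref{tt'} characterizes vanishing of a single cyclic class, but a typical kernel element need not be a single tensor, and bilinearity alone does not obviously reduce a general sum to a sum of matched tensors. The normalization clause $\tfrac{|\theta|}{g'}\theta=(t)_{g'}^*$ in Definition~\ref{wedge}\eqref{wedgea}, rather than mere cyclic subgroup equality, is what makes the passage possible: it produces rigid identifications that allow term-by-term matching after collection along a basis. Carrying this out, especially in the delicate case $\ell=2$, $m=2^{\v_2(q-1)}$, where the natural basis for ${}_n2\,\X(F)$ differs from that for ${}_n\X(F)$ in the ramified directions, is where the real bookkeeping of the proof lies.
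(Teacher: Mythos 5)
Your treatment of well-definedness, surjectivity, and the containment $C\subseteq\ker(\cup)$ is sound and uses the same ingredients as the paper (the coordinate form from Theorem~\ref{Brgp}, Corollary~\ref{tt'} for vanishing of matched pairs); in particular the parity analysis of the ramified coefficients $m_{ij}$ when $m=2^{\v_2(q-1)}$ is correct. The gap is the reverse containment $\ker(\cup)\subseteq C$, which you rightly flag as the hard part but do not carry out, and ``collect along a basis and match term-by-term'' does not close it by itself: after collection a kernel element is a sum $\sum_j\xi_j\otimes x_j^{1/n}$ whose individual summands need not be matched, and Corollary~\ref{tt'} only controls single tensors. The paper's device is a concrete rewriting that you should supply. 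Fix a matched basis $\un\chi\times\un x$ and the section $s:(\chi_i,x_j)\mapsto\chi_i\otimes x_j^{1/n}$ for $0\leq i<j$ of the cup product. Then for $i<j$,
\[
\chi_j\otimes x_i^{1/n}=\bigl[(\chi_j+(x_i)_{m'}^*)\otimes x_i^{1/n}x_j^{1/n}-(x_i)_{m'}^*\otimes x_i^{1/n}-\chi_j\otimes x_j^{1/n}\bigr]-(x_i)_{m'}^*\otimes x_j^{1/n},
\]
where the bracketed term is a sum of three matched tensors, hence lies in $C$, and $(x_i)_{m'}^*\otimes x_j^{1/n}$ is a multiple of $\chi_i\otimes x_j^{1/n}\in\im(s)$ by property (b$'$). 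Since the diagonal tensors $\chi_i\otimes x_i^{1/n}$ are themselves matched, this gives ${}_n2\,\X(F)\otimes\cZ^1(F,\mu_n)=C+\im(s)$, and $\ker(\cup)=C$ then follows formally from $C\subseteq\ker(\cup)$ and $\cup\circ s=\id$, with no extra bookkeeping in the case $\ell=2$.

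Your counting fallback is a legitimate alternative route, but as stated it is not yet a proof: computing the order of ${}_n2\,\X(F)\wedge\cZ^1(F,\mu_n)$ ``directly from Definition~\ref{wedge}'' requires knowing $|C|$ exactly, and the only a priori information is the upper bound $|C|\leq|\ker(\cup)|$ (from the containment you proved together with surjectivity) and a lower bound obtained by exhibiting $n\,m'^{\,d(d+3)/2}$ independent elements of $C$. That independence computation is exactly the content of the paper's Proposition~\ref{wedgebasis}, so the bookkeeping does not disappear under this route, it migrates. Either the rewriting identity above or the explicit independence count must actually be written down to finish the argument.
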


\begin{proof}
Let $\un\chi\times\un x$ be a basis for ${}_n2\,\X(F)\times\cZ^1(F,\mu_n)$ 
of Corollary~\ref{tt'}, so $|\chi_i|=m'$ for $i\geq 1$.
The map ${}_n2\,\Br(F)\lr{}_n2\,\X(F)\otimes\cZ^1(F,\mu_n)$ defined by 
$(\chi_i,x_j)\longmapsto\chi_i\otimes x_j^{1/n}$ for $0\leq i<j$ 
obviously splits the cup product map.
On the other hand, if $0\leq i<j$ we have
\begin{align*}
\chi_j\otimes x_i^{1/n}
&=(\chi_j\otimes x_i^{1/n}+(x_i)_{m'}^*\otimes x_j^{1/n})-(x_i)_{m'}^*\otimes x_j^{1/n}\\
&=[(\chi_j+(x_i)_{m'}^*)\otimes x_i^{1/n}x_j^{1/n}-(x_i)_{m'}^*\otimes x_i^{1/n}-\chi_j\otimes x_j^{1/n}]
-(x_i)_{m'}^*\otimes x_j^{1/n}
\end{align*} 
Since each character is in ${}_n2\,\X(F)$,
the square-bracketed term is in $C$ by Corollary~\ref{tt'}.
Since $\un\chi\otimes\un x$ is a basis,
it follows that any element of ${}_n2\,\X(F)\otimes\cZ^1(F,\mu_n)$ can be expressed as a sum of
an element of $C$ and an element in the image of the splitting map.
Since $C\subset\ker(\cup)$ by Definition~\ref{wedge} and Corollary~\ref{tt'},
we must have $C=\ker(\cup)$, so we have the natural isomorphism.
\end{proof}

We next generalize the basis of Example~\ref{fb} and Corollary~\ref{tt'}.

\begin{Definition}\label{mbasis}
A basis $\un\theta\times\un t$ for ${}_n2\,\X(F)\times\cZ^1(F,\mu_n)$ is {\it matched} if it consists
of matched pairs $(\theta_i,t_i^{1/n})$, as in Definition~\ref{wedge}\eqref{wedgea}
(over only $i=0$ if $m'=1$), and if $m'\neq n$,
$\un t$ is in {\it standard form}, meaning
$\{t_1^{1/n},\dots,t_d^{1/n}\}$ is a uniformizer basis for $F^{\times 1/n}/F^\times$.
\end{Definition}

\begin{Remark}\label{newbegin}
If $\un\theta\times\un t$ is a matched basis for ${}_n2\,\X(F)\times\cZ^1(F,\mu_n)$, then 
by Corollary~\ref{tt'} and Definition~\ref{wedge}\eqref{wedgea}, 
the pairs $(\theta_i,t_i^{1/n})$ satisfy
the hypotheses (\ref{fba}$'$) and (\ref{fbb}$'$) of Corollary~\ref{tt'}, and $\un\theta\times\un t$
may then be used to define matched elements, by Corollary~\ref{tt'}. 
In particular, any matched basis may be used to define $C$ in Definition~\ref{wedge}\eqref{wedgeb}.
\end{Remark}

\begin{Proposition}\label{wedgebasis}
Assume \eqref{setup}.
A matched basis $\un\chi\times\un x$ induces a basis $\un\chi\wedge\un x$
for ${}_n2\,\X(F)\wedge\cZ^1(F,\mu_n)$, and a basis $\un\chi\ccup\un x$
for ${}_n2\,\Br(F)$.
\end{Proposition}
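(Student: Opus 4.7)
The plan is to leverage Theorem~\ref{cocycleskim}, which gives the natural isomorphism ${}_n2\,\X(F)\wedge\cZ^1(F,\mu_n)\isim{}_n2\,\Br(F)$ sending $\theta\wedge t^{1/n}\mapsto(\theta,t)$; under this isomorphism $\un\chi\wedge\un x$ corresponds to $\un\chi\ccup\un x$, so the two basis statements are equivalent and I will work on the wedge side, with the intended set $\un\chi\wedge\un x=\{\chi_i\wedge x_j^{1/n}:0\leq i<j\leq d\}$.

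For generation, the unrestricted family $\{\chi_i\wedge x_j^{1/n}\}_{i,j}$ already generates the wedge, since $\un\chi$ is a basis for ${}_n2\,\X(F)$ and $\un x$ is a basis for $\cZ^1(F,\mu_n)$. Each diagonal pair $(\chi_i,x_i^{1/n})$ is matched by hypothesis, so by Definition~\ref{wedge} it lies in $C$ and hence $\chi_i\wedge x_i^{1/n}=0$. For $i>j$, the identity extracted from the central calculation in the proof of Theorem~\ref{cocycleskim} reads
\[
\chi_i\wedge x_j^{1/n}=-(x_j)_{m'}^*\wedge x_i^{1/n},
\]
and the matched-basis condition $\tfrac{|\chi_j|}{g_j'}\chi_j=(x_j)_{g_j'}^*$ (with $g_j'=\gcd(|\chi_j|,m')$) rewrites the right-hand side as an explicit multiple of $\chi_j\wedge x_i^{1/n}$ with $j<i$. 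Hence $\un\chi\wedge\un x$ generates.

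For linear independence, I compare cardinalities and orders against the structure of ${}_n2\,\Br(F)$. A matched basis has $|\chi_0|=n$ and $|\chi_i|=m'$ for $i\geq 1$, and every $x_j^{1/n}$ has order $n$; so $\un\chi\ccup\un x$ consists of $d$ cup products of order $n$ (those involving $\chi_0$) together with $\binom{d}{2}$ cup products of order dividing $m'$ (those with $1\leq i<j$). The Hochschild--Serre argument that proves Theorem~\ref{Brgp}, repeated with $2\,\X(F)$ in place of $\X(F)$ (and thus with $m'$ in place of $m$, via Albert's criterion applied to the totally ramified symbols), yields ${}_n2\,\Br(F)\cong(\tfrac{1}{n}\Z/\Z)^{\oplus d}\oplus(\tfrac{1}{m'}\Z/\Z)^{\oplus d(d-1)/2}$. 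A generating set whose cardinality and order profile match the elementary divisors of the target is necessarily a basis.

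The hard part will be confirming this $m'$-refinement of Theorem~\ref{Brgp}: one must verify that a totally ramified symbol $(x_i,x_j)_m$ lifts to $2\,\Br(F)$ exactly when its order divides $m'$, which is where Albert's criterion and the reduction from $m$ to $m/2$ in the exceptional case $m=2^{\v_2(q-1)}$ enter. Once the structural decomposition of ${}_n2\,\Br(F)$ is in place, the order count gives independence immediately.
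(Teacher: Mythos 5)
Your proposal is correct in substance but proves independence by a genuinely different route from the paper. The paper stays entirely on the tensor/wedge side: it exhibits an explicit generating set $\boldsymbol c$ for the relation subgroup $C$, checks that $\boldsymbol c$ together with $\b=\{\chi_i\otimes x_j^{1/n}:i<j\}$ is a basis of ${}_n2\,\X(F)\otimes\cZ^1(F,\mu_n)$, and reads off the proposition from the quotient; a byproduct is an explicit basis of $C$, which the paper reuses later (in the proof of Theorem~\ref{unnatural}). You instead count: generation (via the same swap identity as in Theorem~\ref{cocycleskim}) plus the order bound $|(\chi_0,x_j)|\divides n$, $|(\chi_i,x_j)|\divides m'$ for $i\geq 1$, against $|{}_n2\,\Br(F)|=n^d(m')^{d(d-1)/2}$, which forces the surjection $(\Z/n)^{\oplus d}\oplus(\Z/m')^{\oplus d(d-1)/2}\onto{}_n2\,\Br(F)$ to be an isomorphism. (Note you only need the orders to \emph{divide} $n$ and $m'$; exact orders come out a posteriori, which matters since for a general matched basis $\chi_0$ need not be unramified and exactness of $|(\chi_0,x_j)|=n$ is not obvious up front.) The one step you rightly flag as needing care is the structure of ${}_n2\,\Br(F)$, but I would not get it by ``repeating the Hochschild--Serre argument with $2\,\X(F)$'': the group ${}_n2\,\Br(F)$ is defined as the subgroup generated by cyclic classes with characters in ${}_n2\,\X(F)$, not cohomologically, so there is no spectral sequence to rerun. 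The clean derivation is to locate ${}_n2\,\Br(F)$ inside the explicit direct sum of Theorem~\ref{Brgp}: every generator $(\theta,t)$ with $\theta=a\chi_0+\sum_i b_i(x_i)_{m'}^*$ and $t=u\prod x_j^{c_j}$ expands, using $(\chi_0,u)=0$, $((x_i)_{m'}^*,u)=-((u)_{m'}^*,x_i)\in\langle(\chi_0,x_i)\rangle$, and $((x_i)_{m'}^*,x_j)=\tfrac m{m'}(x_i,x_j)_m$, into the subgroup $\bigoplus_j\langle(\chi_0,x_j)\rangle\oplus\bigoplus_{i<j}\langle\tfrac m{m'}(x_i,x_j)_m\rangle$, which all such generators also hit; this gives the order $n^d(m')^{d(d-1)/2}$ directly (the point about $(x_i)_m^*\in 2\,\X(F)$ iff $m\neq 2^{\v_2(q-1)}$ is exactly your Albert's-criterion remark). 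With that substitution your argument closes, and it is arguably shorter than the paper's; its cost is that it does not produce the explicit basis of $C$ that the paper wants downstream.
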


\begin{proof}
If $m'=1$ then $C=\langle\chi_0\otimes x_0^{1/n}\rangle$, and the $\chi_0\wedge x_j^{1/n}$ 
and $(\chi_0,x_j)$ for $j\geq 1$ clearly
form a basis for ${}_n2\,\X(F)\wedge\cZ^1(F,\mu_n)$ and ${}_n2\,\Br(F)$.
Assume $m'\neq 1$. Let 
\[\boldsymbol c=\{\chi_i\otimes x_i^{1/n},
(x_j)_{m'}^*\otimes x_k^{1/n}+\chi_k\otimes x_j^{1/n}\,:\; 0\leq i\leq d, 0\leq j<k\leq d\}
\]
Since $(x_j)_{m'}^*\otimes x_k^{1/n}+\chi_k\otimes x_j^{1/n}=
((x_j)_{m'}^*+\chi_k)\otimes x_j^{1/n}x_k^{1/n}-(x_j)_{m'}^*\otimes x_j^{1/n}-\chi_k\otimes x_k^{1/n}$, 
$\br{\boldsymbol c}\leq C$. The elements of $\boldsymbol c$ are independent:
Every element of ${}_n2\,\X(F)\otimes\cZ^1(F,\mu_n)$ has
a unique expression of the form $\prod_{j=0}^d\xi_j\otimes x_j^{1/n}$,
for characters $\xi_j$, and
a dependence relation
\[\sum_{i=0}^d a_i\chi_i\otimes x_i^{1/n}
+\sum_{0\leq i<j\leq d}a_{ij}((x_i)_{m'}^*\otimes x_j^{1/n}+\chi_j\otimes x_i^{1/n})=0\]
has $x_0^{1/n}$ term
$\xi_0=a_0\chi_0+\sum_{j=1}^d a_{0j}\chi_j$
and for $j\geq 1$, $ x_j^{1/n}$ term 
\[\xi_j=a_j\chi_j+\sum_{0\leq i<j}a_{ij}(x_i)_{m'}^*+\sum_{0\leq j<i\leq d}a_{ji}\chi_i\]
Since $\un x$ is a basis, each $\xi_j$ is zero.
Therefore since $\un\chi$ is a basis for ${}_n2\,\X(F)$, and each $\chi_i$ appears only once in each $\xi_j$, 
the dependence relation is trivial. Since $|\br{\boldsymbol c}|=nm^{\prime d(d+3)/2}$,
we conclude $\br{\boldsymbol c}=C$, so $\boldsymbol c$ is a basis for $C$.
Let $\b=\{\chi_i\otimes x_j^{1/n}\,:\;0\leq i<j\leq d\}$.
Then $\b\,\cup\,\boldsymbol c$ is another basis for ${}_n2\,\X(F)\otimes\cZ^1(F,\mu_n)$,
which shows $\b+C=\un\chi\wedge\un x$ is a basis for the cokernel,
hence that $\un\chi\ccup\un x$ is a basis for ${}_n2\,\Br(F)$.
\end{proof}

\section{Basis Change}

To use exterior algebra machinery to find minimal expressions for a given class in ${}_n2\,\Br(F)$,
we need to characterize basis change matrices on ${}_n2\,\X(F)\wedge\cZ^1(F,\mu_n)$.
To this end we have the following lemma, which characterizes integer matrices 
with well-defined actions on finite abelian groups 
that have unequal invariant factors, such as ${}_n2\,\X(F)$. It also lets us transition
to coefficients in $\tfrac 1n\Z/\Z$, which we use to define
the Hasse invariant.

\begin{Lemma}\label{omit}
Let $G=\Z/d_1\times\cdots\times\Z/d_r$ and
$\prestar G=\tfrac 1{d_1}\Z/\Z\oplus\cdots\oplus\tfrac 1{d_r}\Z/\Z$ be abelian groups with invariant factors
$d_i$, and let $d_{ij}=\gcd(d_i,d_j)$.
\begin{enumerate}[\rm(a)]
\item\label{omita}
Each $\rho\in\Aut(G)$ is representable by a $P=(p_{ij})\in\M_r(\Z)$
that satisfies $\tfrac{d_i}{d_{ij}}|\, p_{ij}$.
\item\label{omitb}
Each $\prestar\rho\in\Aut(\prestar G)$ is representable by a $\prestar P=(p_{ij})\in\M_r(\Z)$
that satisfies $\tfrac{d_j}{d_{ij}}|\, p_{ij}$.
\item\label{omitd}
Any $P\in\Aut(G)_\Z$ or $\prestar P\in\Aut(\prestar G)_\Z$ determines a $\rho$ or $\prestar\rho$
as in \eqref{omita} or \eqref{omitb}, and
$P=(p_{ij})$ and $P'=(p_{ij}')$ determine the same $\rho$ if and only
if $p_{ij}\equiv p_{ij}'\pmod{d_i}$. 
\item\label{omitc}
Let $\Aut(G)_\Z$ and $\Aut(\prestar G)$ denote the semigroups of such matrices. 
There is a natural bijection
\begin{align*}
\Aut(G)_\Z&\longleftrightarrow\Aut(\prestar G)_\Z\\
P=(p_{ij})&\longmapsto \prestar P=(\prestar p_{ij})=(\tfrac{d_j}{d_i}p_{ij})\\
Q^*=(q_{ij}^*)=(\tfrac{d_i}{d_j}q_{ij})&\longmapsfrom Q=(q_{ij})
\end{align*}
making a commutative diagram
\[\begin{tikzcd}
&G\arrow[r, "P"]\arrow[d, "\wr"]&G\arrow[d, "\wr"]\\
&\prestar G\arrow[r, "\prestar P"']&\prestar G
\end{tikzcd}\]
\end{enumerate}
\end{Lemma}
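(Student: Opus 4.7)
The plan is that the statement is essentially bookkeeping about representing endomorphisms of finite abelian groups by integer matrices with respect to the canonical generators, so the proof is a direct calculation once the two different normalizations for $\prestar G$ are distinguished.

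I would first address \eqref{omita} and \eqref{omitd} together. Let $e_j \in G$ be the $j$-th canonical generator, of order $d_j$. An endomorphism $\rho$ is determined by the images $\rho(e_j) = \sum_i p_{ij} e_i$, where the $i$-th coordinate $p_{ij}$ is a priori defined only modulo $d_i$. Lifting each entry to an integer produces a matrix $P = (p_{ij}) \in \M_r(\Z)$, and two lifts $P,P'$ represent the same $\rho$ precisely when $p_{ij} \equiv p_{ij}' \pmod{d_i}$ for all $i,j$, establishing the uniqueness clause of \eqref{omitd}. The requirement that $\rho$ be well defined is $d_j\rho(e_j)=0$, i.e.\ $d_i \,|\, d_j p_{ij}$; since $\gcd(d_i/d_{ij},\,d_j/d_{ij}) = 1$ this is equivalent to $(d_i/d_{ij}) \,|\, p_{ij}$, proving \eqref{omita}. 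Any such $P$ determines an endomorphism, and it lies in $\Aut(G)_\Z$ precisely when the induced map is invertible.

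Next, for \eqref{omitb}, I would run the same analysis on $\prestar G$ but using the non-obvious normalization in which the images are written with denominator $d_j$ rather than $d_i$. An endomorphism $\prestar\rho$ is determined by the images $\prestar\rho(1/d_j)$, which I would express as $\sum_i \prestar p_{ij}/d_j$. The condition $\prestar p_{ij}/d_j \in (1/d_i)\Z/\Z$ says $d_i \cdot \prestar p_{ij}/d_j \in \Z$, i.e.\ $d_j \,|\, d_i\,\prestar p_{ij}$, which is equivalent to $(d_j/d_{ij}) \,|\, \prestar p_{ij}$. This representation is unique modulo $d_j$ in each column, completing the version of \eqref{omitd} for $\prestar G$.

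For \eqref{omitc}, the canonical identification $G \isim \prestar G$ sending $e_j \mapsto 1/d_j$ translates $\rho(e_j) = \sum_i p_{ij} e_i$ into $\prestar\rho(1/d_j) = \sum_i p_{ij}/d_i$. Equating this with the \eqref{omitb}-style expression $\sum_i \prestar p_{ij}/d_j$ forces $\prestar p_{ij} = (d_j/d_i) p_{ij}$; since $(d_i/d_{ij}) \,|\, p_{ij}$ by \eqref{omita}, we have $\prestar p_{ij} \in \Z$ and $(d_j/d_{ij}) \,|\, \prestar p_{ij}$. The inverse $q_{ij} \mapsto (d_i/d_j) q_{ij}$ reverses this, giving the claimed bijection of semigroups, and commutativity of the diagram is automatic from the construction. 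The main (modest) obstacle throughout is keeping the two index positions and the two denominator conventions straight; everything ultimately reduces to the observation that an element of $(1/d_i)\Z/\Z$ may be written either with denominator $d_i$ (no divisibility constraint) or with denominator $d_j$ (requiring $(d_j/d_{ij}) \,|\,$ numerator), and the two forms are related by multiplication by $d_j/d_i$.
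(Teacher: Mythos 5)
Your proof is correct and follows essentially the same route as the paper: interpret the $(i,j)$ entry as a homomorphism between the $j$-th and $i$-th cyclic components, extract the divisibility conditions $\tfrac{d_i}{d_{ij}}\mid p_{ij}$ (resp.\ $\tfrac{d_j}{d_{ij}}\mid \prestar p_{ij}$) from well-definedness, and obtain the bijection in \eqref{omitc} from the canonical identification $\Z/d_i\isom\tfrac 1{d_i}\Z/\Z$, which is exactly the commutative diagram the paper invokes. No gaps.
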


\begin{proof}
See also \cite{HR07}.
It is clear that any $\rho\in\Aut(G)$ is representable with respect to the standard basis
by an integer matrix $(p_{ij})$, whose $ij$-th entry maps $\Z/d_j$ to $\Z/d_i$, which is well-defined
if and only if $p_{ij}$
is divisible by $d_i/d_{ij}$. Similarly $\prestar\rho$ is representable by a $(\prestar p_{ij})$,
and $\prestar p_{ij}:\tfrac 1{d_j}\Z/\Z\to\tfrac 1{d_i}\Z/\Z$ is well defined if and only if $\prestar p_{ij}$
is divisible by $d_j/d_{ij}$. 
Two matrices $(p_{ij})$ and $(p_{ij}')$ determine the same $\rho$ if and only
if $[p_{ij}]_{d_i}=[p_{ij}']_{d_i}$ for each $i,j$, if and only if $p_{ij}\equiv p_{ij}'\pmod{d_i}$.
This proves \eqref{omita}, \eqref{omitb}, and \eqref{omitd}, and
\eqref{omitc} follows from the commutative diagram
\[\begin{tikzcd}
&\Z/d_j\arrow[r, "\frac{d_i}{d_{ij}}"]\arrow[d, "\wr"]&\Z/d_i\arrow[d, "\wr"]\\
&\tfrac 1{d_j}\Z/\Z\arrow[r, "\frac{d_j}{d_{ij}}"']&\tfrac 1{d_i}\Z/\Z
\end{tikzcd}\]
\end{proof}

\subsubsection{Character Bases}
Assume \eqref{setup}, with $n$ a power of $\ell$.
By Theorem~\ref{chargp}, ${}_n2\,\X(F)\isom\Z/n$ if $m'=1$, and
${}_n2\,\X(F)\isom\Z/n\times(\Z/m')^d$ if $m'\neq 1$.
When $m'\neq n$, a given basis for ${}_n2\,\X(F)$ is in standard form as in
Definition~\ref{mbasis}, i.e.,
ordered with the order-$n$ element first.
If $\un\theta$ and $\un\chi$ are bases, there is a basis change $[\id]_{\un\chi}^{\un\theta}$,
and by Lemma~\ref{omit} it is given by a matrix $R=(r_{ij})\in\Aut({}_n2\,\X(F))_\Z$.
If $m'=1$, $R=[r_{00}]$ with $r_{00}$ prime-to-$\ell$. 
If $m'\neq 1$ then $\tfrac n{m'}|\, r_{0j}$ for $j\geq 1$. 
If $m'\neq n$, $r_{00}$ and $R_{00}$ are invertible $\pmod\ell$, 
where $R_{00}$ is the submatrix obtained by deleting row and column $0$. 

\subsection{Matched Basis Change}\label{mmchange}
If $\un\chi\times\un x$ is a matched basis, then by Remark~\ref{newbegin},
$\un\theta\times\un t$ is a matched basis if and only if
$\un\theta\times\un t=\un\chi\times\un x(R\oplus S)$ for some $R=(r_{ij})$ and $S=(s_{ij})$
satisfying
\begin{equation}\label{mxeqns}
r_{0j}s_{ij}\equiv\tfrac n{m'}s_{0j}r_{ij}\pmod n\quad\text{and}\quad r_{ij}s_{kj}\equiv r_{kj}s_{ij}\pmod{m'}
\quad (i\geq 1, j\geq 0)
\end{equation}  
and the normalization equations of Definition~\ref{wedge}\eqref{wedgea},
\begin{align*}
r_{00}\equiv s_{00}\pmod {m'},\quad
&r_{0j}\equiv \tfrac n{m'} s_{0j}\pmod n,\;\text{ for $j\geq 1$}\\
\tfrac n{m'} r_{i0}\equiv s_{i0}\pmod {m'},\quad
&r_{ij}\equiv s_{ij}\pmod {m'},\;\text{ for $i,j\geq 1$}
\end{align*}
If $m'=1$, $R=[r_{00}]$ is invertible, hence \eqref{mxeqns} implies $s_{i0}\equiv 0\pmod n$.
We call such $R\oplus S\in\Aut({}_n2\,\X(F))_\Z\oplus\Aut(\cZ^1(F,\mu_n))_\Z$
{\it matched basis changes}, and write $R\oplus S=[\id]_{\un\theta\times\un t}^{\un\chi\times\un x}$.

If $P\in\Aut(\cZ^1(F,\mu_n))_\Z$ preserves standard form,
and $\tfrac n{m'}|\, p_{i0}$, then $P^*\oplus P$ is a matched basis change matrix, since
$p_{i0}^*=\tfrac{m'}np_{i0}$ and $p_{0j}^*=\tfrac n{m'}p_{0j}$ for $i,j\geq 1$, and $p_{ij}^*=p_{ij}$ otherwise.
This type will suffice below, though it is not the most general kind.

\begin{Proposition}\label{wedgebasischange}
Assume \eqref{setup}, with $n$ a power of $\ell$.     
A matched basis change $P^*\oplus P=[\id]_{\un\theta\times\un t}^{\un\chi\times\un x}$
induces on ${}_n2\,\X(F)\wedge\cZ^1(F,\mu_n)$ the basis change
$P^{*\wedge 2}=[\id]_{\un\theta\wedge\un t}^{\un\chi\wedge\un x}$.
\end{Proposition}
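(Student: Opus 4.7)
The plan is to first verify that $P^*\otimes P$ descends to a well-defined endomorphism of the quotient ${}_n2\,\X(F)\wedge\cZ^1(F,\mu_n)$, and then to compute its matrix in the basis $\un\chi\wedge\un x$ directly.

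For the first step, I would invoke the hypothesis that $P^*\oplus P$ is a matched basis change. By Definition~\ref{wedge}\eqref{wedgea} and the discussion in Section~\ref{mmchange}, this means that the image basis $\un\theta\times\un t$ is again matched, so in particular every $\theta_i\otimes t_i^{1/n}$ and every combination $(t_j)_{m'}^*\otimes t_k^{1/n}+\theta_k\otimes t_j^{1/n}$ lies in $C$. Since elements of these two types span $C$ (the explicit basis of $C$ produced in the proof of Proposition~\ref{wedgebasis}), this forces $(P^*\otimes P)(C)\subseteq C$, so the tensor operator descends to a well-defined map $\overline{P^*\otimes P}$ on ${}_n2\,\X(F)\wedge\cZ^1(F,\mu_n)$.

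For the second step, I would expand
\[
\theta_i\wedge t_j^{1/n} \;=\; \sum_{k,l=0}^{d} p_{ki}^*\, p_{lj}\;\chi_k\wedge x_l^{1/n}
\]
and reduce in the wedge product. The diagonal terms vanish because $\chi_k\wedge x_k^{1/n}=0$. For the off-diagonal terms with $k>l$ I would apply the relation $\chi_k\wedge x_l^{1/n}=-(x_l)_{m'}^*\wedge x_k^{1/n}$, together with the matched-basis identities $(x_l)_{m'}^*=\chi_l$ for $l\geq 1$ and $(x_0)_{m'}^*=\tfrac{n}{m'}\chi_0$, to rewrite every term in the standard basis $\{\chi_k\wedge x_l^{1/n}:k<l\}$.

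The main obstacle will be the index-$0$ bookkeeping. Each time the reduction invokes $(x_0)_{m'}^*$, an extra factor $\tfrac{n}{m'}$ appears; this must combine cleanly with the matrix entries $p_{0j}$ or $p_{i0}$ to produce their starred counterparts $p_{0j}^*=\tfrac{n}{m'}p_{0j}$ and $p_{i0}^*=\tfrac{m'}{n}p_{i0}$, while for indices $\geq 1$ the starred and unstarred entries agree. If this uniform cancellation goes through, the coefficient of $\chi_k\wedge x_l^{1/n}$ (with $k<l$) in $\theta_i\wedge t_j^{1/n}$ will collapse to the $2\times 2$ minor
\[
p_{ki}^*\,p_{lj}^* \;-\; p_{li}^*\,p_{kj}^*,
\]
which is precisely the $((k,l),(i,j))$-entry of the exterior square $P^{*\wedge 2}$. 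Verifying the four cases ($k=0$ or $k\geq 1$ vs.\ $l=0$ or $l\geq 1$) of this cancellation is the only delicate point; everything else is formal linearity and the defining relations of Definition~\ref{wedge}.
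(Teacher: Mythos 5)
Your proposal is correct and follows essentially the same route as the paper's proof: expand $\theta_i\wedge t_j^{1/n}$ in the old basis, kill the diagonal terms, fold the $k>l$ terms back using $\chi_k\wedge x_l^{1/n}=-(x_l)_{m'}^*\wedge x_k^{1/n}$ together with $(x_l)_{m'}^*=\chi_l$ ($l\geq 1$) and $(x_0)_{m'}^*=\tfrac n{m'}\chi_0$, and observe that the $\tfrac n{m'}$ factors are exactly absorbed into the starred entries so that the coefficients become the $2\times 2$ minors of $P^*$. The index-$0$ cancellation you flag does go through (note that for basis elements $\theta_i\wedge t_j^{1/n}$ one always has $j\geq 1$, which eliminates the one case where it would not), and the only point the paper adds that you omit is the remark that $Q\equiv P\pmod n$ yields the same induced transformation, so $P^{*\wedge 2}$ is independent of the integer representative.
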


\begin{proof}
We ignore the ``zero'' basis elements of ${}_n2\,\X(F)$ when $m'=1$, 
but keep the degree-$d+1$ matrix $P^*$ in order to define $P^{*\wedge 2}$.
The matched bases induce bases $\un\theta\wedge\un t$ and
$\un\chi\wedge\un x$ on ${}_n2\,\X(F)\wedge\cZ^1(F,\mu_n)$ by Proposition~\ref{wedgebasis}.
Let $P=(p_{ij})$.
Since
$p^*_{0j}=\tfrac n{m'} p_{0j}$ and $p^*_{jl}= p_{jl}$ for $j,l\geq 1$
by Lemma~\ref{omit}, we compute for $l\geq 1$,
\begin{align*}
\theta_0\wedge t_l^{1/n}
&=\sum_{i=0}p^*_{i0}\chi_i\wedge \prod_{j=0}x_j^{p_{jl}/n}
 =\sum_{i\neq j}p^*_{i0}p_{jl}\chi_i\wedge x_j^{1/n}\\
&=\sum_{1\leq j}(p^*_{00}p_{jl}-p^*_{j0}p_{0l}\tfrac n{m'})\chi_0\wedge x_j^{1/n}
+\sum_{1\leq i<j}(p^*_{i0}p_{jl}-p^*_{j0}p_{il})\chi_i\wedge x_j^{1/n}\\
&=\sum_{1\leq j}(p^*_{00}p^*_{jl}-p^*_{j0}p^*_{0l})\chi_0\wedge x_j^{1/n}
+\sum_{1\leq i<j}(p^*_{i0}p^*_{jl}-p^*_{j0}p^*_{il})\chi_i\wedge x_j^{1/n}
\end{align*}
When ${m'}=1$ the only nonzero entries are 
$p^*_{00}p_{jl}^*\,\chi_0\wedge x_j^{1/n}$.
For $1\leq k<l$, when $m'\neq 1$,
\begin{align*}
\theta_k\wedge t_l^{1/n}
&=\sum_{1\leq j}(p^*_{0k}p_{jl}-p^*_{jk}p_{0l}\tfrac n{m'})\chi_0\wedge x_j^{1/n}
+\sum_{1\leq i<j}(p^*_{ik}p_{jl}-p^*_{jk}p_{il})\chi_i\wedge x_j^{1/n}\\
&=\sum_{1\leq j}(p^*_{0k}p^*_{jl}-p^*_{jk}p^*_{0l})\chi_0\wedge x_j^{1/n}
+\sum_{1\leq i<j}(p^*_{ik}p^*_{jl}-p^*_{jk}p^*_{il})\chi_i\wedge x_j^{1/n}
\end{align*}
Thus the basis change is $P^{*\wedge 2}$.
If $Q$ and $P$ give the same matched basis change, 
then $Q\equiv P\pmod n$, hence $Q^{*\wedge 2}$ and $P^{*\wedge 2}$ produce the same
basis change on ${}_n2\,\X(F)\times\cZ^1(F,\mu_n)$. Therefore the transformation
is well-defined, and this completes the proof.
\end{proof}

\section{Computing in $2\,\Br(F)$ with Skew-Symmetric Matrices}\label{skewsection}

By Theorem~\ref{cocycleskim}, for any $n$ not divisible by $p$ we have a natural isomorphism
\[{}_n2\,\X(F)\wedge\cZ^1(F,\mu_n)\overset{\sim}\lr{}_n2\,\Br(F)\]
The bases of Proposition~\ref{wedgebasis} determine 
for each element a skew-symmetric matrix,
which we will use to compute the index and decomposition into cyclic division algebras
of each Brauer class.

\begin{Theorem}\label{alta}
Assume \eqref{setup}, with $n$ a power of $\ell$.
A matched basis $\un\chi\times\un x$
determines an injective homomorphism
\[\alt_{\un\chi\times\un x}:{}_n2\,\Br(F)\lr\Alt_{d+1}(\tfrac 1n\Z/\Z)\]
defined by $[(\chi_i,x_j)]_{\un\chi\times\un x}=\tfrac 1{|\chi_i|}(e_{ij}-e_{ji})$
for $0\leq i<j\leq d$.
A matched basis change $P^*\oplus P=[\id]_{\un\chi\times\un x}^{\un\theta\times\un t}$ 
induces a commutative diagram
\[
\begin{tikzcd}
&\Alt_{d+1}(\tfrac 1n\Z/\Z)\arrow[dd,"P\,-\,P^\tr"]\\
{}_n2\,\Br(F)\arrow[ur,"{\alt_{\un\chi\times\un x}}"]\arrow[dr,"{\alt_{\un\theta\times\un t}}"']\\
&\Alt_{d+1}(\tfrac 1n\Z/\Z)
\end{tikzcd}
\]
Conversely, congruence transformation by 
any $P\in\M_{d+1}(\Z)$ satisfying $\tfrac n{m'}|\, p_{i0}$ for $i\geq 1$,
and such that $P$ is invertible $\pmod\ell$, and $p_{00}$ and $P_{00}$ are invertible $\pmod \ell$ when $m'\neq n$,
is induced by the matched basis change matrix 
$P^*\oplus P=[\id]_{\un\chi\times\un x}^{\un\theta\times\un t}\in\Aut({}_n2\,\X(F))_\Z\oplus\Aut(\cZ^1(F,\mu_n))_\Z$.
\end{Theorem}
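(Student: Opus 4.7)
My plan is to treat the three parts of the theorem separately: constructing $\alt_{\un\chi\times\un x}$, establishing the commutative diagram for a matched basis change, and proving the converse.

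For existence and injectivity of $\alt_{\un\chi\times\un x}$, I would invoke Theorem~\ref{cocycleskim} and Proposition~\ref{wedgebasis} to write ${}_n2\,\Br(F)$ as a direct sum of cyclic summands with basis $\un\chi\ccup\un x=\{(\chi_i,x_j):0\leq i<j\leq d\}$, where $(\chi_0,x_j)$ has order $n$ and $(\chi_i,x_j)$ has order $m'$ for $i\geq 1$. The formula $(\chi_i,x_j)\mapsto\tfrac{1}{|\chi_i|}(e_{ij}-e_{ji})$ sends each summand to an element of the same order in $\Alt_{d+1}(\tfrac{1}{n}\Z/\Z)$, so it extends uniquely to a homomorphism, and injectivity is immediate because the images generate independent cyclic subgroups of matching orders.

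For the commutative diagram, my plan is to combine Proposition~\ref{wedgebasischange}, which supplies the wedge-product basis change as the $2\times 2$ minors of $P^*$, with the scaling identity $p^*_{ab}=(|\chi_a|/|\chi_b|)p_{ab}$ from Lemma~\ref{omit}(c). Expressing $\alt_{\un\theta\times\un t}((\chi_i,x_j))$ first in terms of the entries of $P^*$ and then converting to $P$, and comparing with the standard expansion of $(PAP^\tr)_{kl}$ for $A=\alt_{\un\chi\times\un x}((\chi_i,x_j))$, leaves a single prefactor $|\chi_l|/|\chi_j|$. This collapses to $1$ precisely because the wedge-basis restrictions $i<j$ and $k<l$ force $j,l\geq 1$, so the identity $A'=PAP^\tr$ holds on each basis element, and linearity extends it to all of ${}_n2\,\Br(F)$.

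For the converse, given $P$ satisfying the hypotheses, I would define $P^*$ by $p^*_{ij}=(|\chi_i|/|\chi_j|)p_{ij}$; the divisibility $\tfrac{n}{m'}\mid p_{i0}$ ensures integer entries, and Lemma~\ref{omit}(c) recognizes $P^*$ as an endomorphism of ${}_n2\,\X(F)$. For invertibility I would reduce modulo~$\ell$: when $m'\neq n$, $\ell\mid n/m'$ forces $P^*\pmod{\ell}$ to be block lower-triangular with diagonal blocks $[p_{00}]$ and $P_{00}$, both units by hypothesis; when $m'=n$, $P^*=P$ is directly invertible. The discussion preceding Proposition~\ref{wedgebasischange} then certifies $P^*\oplus P$ as a matched basis change matrix, so it equals $[\id]_{\un\chi\times\un x}^{\un\theta\times\un t}$ for the resulting matched basis $\un\theta\times\un t$. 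The main obstacle is the second step: since ${}_n2\,\X(F)$ and $\cZ^1(F,\mu_n)$ carry different invariant-factor structures ($n,m',\ldots,m'$ versus $n,n,\ldots,n$) and the $\alt$-map further divides wedge coordinates by $|\chi_i|$, it is not a priori obvious that pure matrix congruence $A\mapsto PAP^\tr$ is the correct action, and the delicate point is that the specific index combinatorics of the wedge basis, combined with the scaling identity of Lemma~\ref{omit}, are exactly what trivializes the mismatched ratio $|\chi_l|/|\chi_j|$.
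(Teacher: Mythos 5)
Your proposal is correct and follows essentially the same route as the paper: the map and its injectivity come from the wedge-product basis of Theorem~\ref{cocycleskim} and Proposition~\ref{wedgebasis}, the commutative diagram from Proposition~\ref{wedgebasischange} together with the rescaling $\prestar P^*=P$ of Lemma~\ref{omit} (your explicit check that the leftover factor $|\chi_l|/|\chi_j|$ is trivial because $j,l\geq 1$ is exactly the content of the paper's appeal to that lemma), and the converse from the discussion of matched basis changes in \eqref{mmchange}. The only difference is that you spell out computations the paper leaves implicit.
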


\begin{proof}
A basis $\un\chi\wedge\un x$ for ${}_n2\,\X(F)\wedge\cZ^1(F,\mu_n)$ determines a map
\[\begin{tikzcd}
{}_n2\,\X(F)\wedge\cZ^1(F,\mu_n)\arrow[r]&\Alt_{d+1}(\tfrac 1n\Z/\Z)
\end{tikzcd}\]
by $\chi_i\wedge x_j^{1/n}\longmapsto\tfrac 1{|\chi_i|}(e_{ij}-e_{ji})$.
The basis change $P^*\oplus P=[\id]_{\un\chi\times\un x}^{\un\theta\times\un t}$ induces a
basis change $P^{*\wedge 2}=[\id]_{\un x^{\wedge 2}}^{\un t^{\wedge 2}}$ by Proposition~\ref{wedgebasischange},
which on $\Alt_{d+1}(\Z/n)$ is congruence transformation by $P^*$, but on $\Alt_{d+1}(\tfrac 1n\Z/\Z)$
is congruence transformation by $P=\prestar P^*$, as per Lemma~\ref{omit}.
Composing with the inverse of the natural isomorphism of Theorem~\ref{cocycleskim}
yields the result.

Conversely, congruence transformation by any $P$ satisfying the stated conditions 
is induced by the matched basis change $P^*\oplus P$ by \eqref{mmchange},
Proposition~\ref{wedgebasischange}, and what was just proved.
\end{proof}

\begin{Remark}
When $F$ is a (1-dimensional) local field, and $n$ is prime-to-$p$,
identifying $\Alt_2(\tfrac 1n\Z/\Z)$ with $\tfrac 1n\Z/\Z$ yields the (canonical)
Hasse invariant map
\[\inv_F:{}_n\Br(F)\isim\tfrac 1n\Z/\Z\]
It is canonical because the map determined by the Frobenius character $\chi_0$ and any uniformizer
give the same map into $\Alt_2(\tfrac 1n\Z/\Z)$: Any basis change matrix $P^*$ for ${}_n2\,\X(F)$
that preserves $\chi_0$ and preserves the value $1\pmod n$ of the uniformizer has the form
\[P^*=\bmx{1&p_{01}^*\\0&1}\pmod n\]
hence $P^{*\wedge 2}=[1]$. This shows the map ${}_n2\,\Br(F)\to\Alt_2(\tfrac 1n\Z/\Z)\isom\tfrac 1n\Z/\Z$ is independent of 
the basis used to define it.
\end{Remark}

We next prove the main result, that the skew-symmetric matrix assigned to a Brauer class 
computes its index, via the square root of the determinant, 
and the decomposition of its associated division algebra into cyclic division algebras, via a symplectic basis.

\subsection{Pfaffian}
The determinant of a skew-symmetric
matrix over a commutative ring is zero if the matrix 
has odd degree, and it is a square if the matrix has even degree.  
The positive square root is called the {\it pfaffian}.
Computations over $\Q/\Z$ can be well-defined as follows.

A degree-$r$ {\it skew-symmetric submatrix} $S$ of a skew-symmetrix matrix $A$
is the submatrix obtained by intersecting some set of $r$ rows $i_1,\dots,i_r$
with the columns $i_1,\dots,i_r$.

\begin{Definition}\label{pfaffian}
Suppose $A\in\Alt_{d+1}(\tfrac 1n\Z/\Z)$.
\begin{enumerate}[(a)]
\item
The {\it row subgroup} $\row(A)$ is the subgroup of $(\tfrac 1n\Z/\Z)^{d+1}$ generated by $A$'s rows.
\item
The {\it pfaffian subgroup}
$\pfaff(A)$ is the subgroup of $\Q/\Z$
generated by the pfaffians of all even-degree skew-symmetric submatrices
of $A$, computed na\"\i vely.
\end{enumerate}
\end{Definition}

This is \cite[Definition 2.7]{Br01b}.
``Computed na\"\i vely'' means computed from an arbitrary 
representative of $A$ in $\M_{d+1}(\Q)$, then interpreted (mod $\Z$).
For a given skew-symmetric $A$, $\pfaff(A)$ is well defined by cofactor expansion, 
and is invariant under congruence transformation by \cite[Proposition 2.8, Lemma 2.9]{Br01b}.
The isomorphism class of $\row(A)$ is also preserved, since congruence transformations
induce automorphisms of $(\tfrac 1n\Z/\Z)^{d+1}$.

Since $\row(A)$ and $\pfaff(A)$ are invariant under congruence transformation,
they can be computed from a 2-block matrix congruent to $A$.
Thus if
\[A\;\sim\;\sum_{i=0}^{\lfloor \tfrac{d-1}2\rfloor} a_{2i\,2i+1}(e_{2i\,2i+1}-e_{2i+1\,2i})\]
for $a_{2i\,2i+1}\in\Q/\Z$
and $d_i=|a_{2i\,2i+1}|$ is the order of the $i$-th block, 
then 
\begin{align*}
\row(A)&\isom\prod_{i=0}^r\tfrac 1{d_i}\Z/\Z\times\tfrac 1{d_i}\Z/\Z\\
\pfaff(A)&=\br{\tfrac 1{d_1 d_2\cdots d_r}}
\end{align*}
where $r=\lfloor \tfrac{d-1}2\rfloor$.
The numbers $d_0,d_0,d_1,d_1,\dots,d_r,d_r$, where $d_i\geq 1$, 
are the invariant factors of the finite abelian group $\row(A)$.

\begin{Theorem}\label{alt}
Assume \eqref{setup}, with $n$ a power of $\ell$.
Let $\un\chi\times\un x$ be a matched basis, with respect to which
$\alpha\in{}_n2\,\Br(F)$ has skew-symmetric matrix $\alt_{\un\chi\times\un x}(\alpha)=A\in\Alt_{d+1}(\tfrac 1n\Z/\Z)$.
Then
\[\ind(\alpha)=|\pfaff(A)|\]
If $d_0,d_0,\dots,d_r,d_r$ are the invariant factors of $\row(A)$, where $r=\lfloor\tfrac{d-1}2\rfloor$,
then there exists a matched basis $\un\theta\times\un t$
for ${}_n2\,\X(F)\times\cZ^1(F,\mu_n)$ with respect to which
\[\alpha=\tfrac n{d_0}(\theta_0,t_1)+(t_2,t_3)_{d_1}+\cdots+(t_{2r},t_{2r+1})_{d_r}\] 
and $\alpha$'s division algebra decomposes into cyclic division algebras
\[\Delta(\alpha)=\Delta(\tfrac n{d_0}(\theta_0,t_1))\otimes_F\Delta((t_2,t_3)_{d_1})
\otimes_F\cdots\otimes_F\Delta((t_{2r},t_{2r+1})_{d_r})\] 
\end{Theorem}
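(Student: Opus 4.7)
The plan is to reduce $A$ to block-diagonal form via a matched basis change allowed by Theorem~\ref{alta}, read off the cyclic decomposition of $\alpha$ directly, and then invoke the valuation-theoretic machinery of Section~\ref{value} to confirm that the resulting tensor product is the underlying division algebra with the stated index.

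First I would carry out the symplectic reduction: find $P\in\M_{d+1}(\Z)$ such that $PAP^\tr$ has $2$-block diagonal form with blocks $\tfrac{1}{d_i}(e_{2i,2i+1}-e_{2i+1,2i})$, where $d_0,d_0,\dots,d_r,d_r$ are the invariant factors of $\row(A)$. This is a standard inductive procedure on the $\ell$-primary module $\row(A)$, as in \cite[Proposition 2.8]{Br01b}. The key point is to arrange $P$ so that it satisfies the constraints of Theorem~\ref{alta}: $P$ invertible mod $\ell$, with $p_{00}$ and the minor $P_{00}$ invertible mod $\ell$ when $m'\neq n$, and $\tfrac{n}{m'}\mid p_{i0}$ for $i\geq 1$. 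The essential observation is that an element of maximal order $d_0$ in $\row(A)$ can be chosen to involve the zeroth coordinate whenever $d_0>m'$, so the first symplectic pair uses the distinguished $\chi_0$-direction; iteration then proceeds on an orthogonal complement whose entries lie in $\tfrac{1}{m'}\Z/\Z$, where the remaining divisibility condition is automatic.

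Second, by the converse direction of Theorem~\ref{alta}, this $P$ is induced by a matched basis change $P^*\oplus P=[\id]_{\un\chi\times\un x}^{\un\theta\times\un t}$, so in the new basis $\alpha$ has skew-symmetric matrix $PAP^\tr$. Translating back through the isomorphism of Theorem~\ref{cocycleskim} via $\chi_i\wedge x_j^{1/n}\mapsto\tfrac{1}{|\chi_i|}(e_{ij}-e_{ji})$, this reads
\[
\alpha=\tfrac{n}{d_0}(\theta_0,t_1)+(t_2,t_3)_{d_1}+\cdots+(t_{2r},t_{2r+1})_{d_r},
\]
with the first summand unramified in the $\theta_0$-direction and the remaining summands symbol algebras in uniformizers.

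Third, to identify the underlying division algebra and compute its index, I would apply the Decomposition Lemma of Section~\ref{value}. Set $S=\Delta(\tfrac{n}{d_0}(\theta_0,t_1))$, which is nicely semiramified of degree $d_0$, and $T=\Delta((t_2,t_3)_{d_1})\otimes_F\cdots\otimes_F\Delta((t_{2r},t_{2r+1})_{d_r})$, which is totally ramified by \cite[Extension Lemma 1.6]{TW87}: the groups $\Gamma_{\Delta((t_{2i},t_{2i+1})_{d_i})}/\Gamma_F$ are rank-$2$ and mutually disjoint in $\tfrac{1}{m'}\Gamma_F/\Gamma_F$ because $\un t$ is in standard form. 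Thus $T$ is a division algebra with $[T:F]=(d_1\cdots d_r)^2$ and $\Gamma_T$ disjoint from $\Gamma_S$. By \cite[Theorem 6.3]{JW90}, $S\otimes_F T$ is itself a division algebra with value group $\Gamma_S+\Gamma_T$, and Draxl's Ostrowski Theorem yields $[S\otimes_F T:F]=d_0^2(d_1\cdots d_r)^2$. Hence $\Delta(\alpha)=S\otimes_F T$ and $\ind(\alpha)=d_0 d_1\cdots d_r$. Since the invariant factors of $\row(A)$ are $d_0,d_0,\dots,d_r,d_r$, the explicit formula for $\pfaff(A)$ in Definition~\ref{pfaffian} gives $|\pfaff(A)|=d_0 d_1\cdots d_r=\ind(\alpha)$.

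The main obstacle is the first step: the standard symplectic reduction over $\Z/n$ does not automatically respect the distinguished coordinate arising from the unequal invariant factors of ${}_n2\,\X(F)\cong\Z/n\oplus(\Z/m')^d$, so one must split into cases according to whether $d_0=n$ (the $\chi_0$-direction is consumed by the first block), $m'\leq d_0<n$ (the reduction uses $\chi_0$ but stays in the $\Z/m'$-subgroup after the first block), or $d_0\leq m'$ (the entire reduction occurs in the $\Z/m'$-part and column $0$ is preserved up to a unit), and verify in each case that the resulting $P$ meets the invertibility and divisibility conditions required for the converse of Theorem~\ref{alta} to apply.
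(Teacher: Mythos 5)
Your proposal contains a genuine gap, and it lies not in the first step (where you flag concern) but in the third. You assert that after the symplectic reduction, $S=\Delta(\tfrac{n}{d_0}(\theta_0,t_1))$ is nicely semiramified. This presumes that $\theta_0$ in the resulting matched basis is unramified, which in general is false. The paper makes exactly this point in the example following Corollary~\ref{bigdivalg}: with $m=\ell^3$ ($\ell$ odd), the class $\alpha=(x_0,x_1)_{\ell^3}+(x_1,x_2)_{\ell^2}+(x_2,x_3)_\ell$ is put in disjoint $2$-block form by $P=C_{20}(\ell)$, which replaces the unramified $t_0=x_0$ with the ramified $t_0=x_0x_2^{-\ell}$, so the leading block $\Delta(t_0,t_1)_{\ell^3}$ is no longer nicely semiramified. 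Once $\theta_0$ is ramified, you cannot directly invoke the decomposition $D\sim S\otimes_F T$ with $S$ NSR to read off the index from the $2$-block form, and the clean argument ``$\ind(\alpha)=d_0d_1\cdots d_r$'' collapses.

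The paper avoids this by computing the index from the \emph{linked} $2$-block form rather than the disjoint form. Steps I and II of the paper's proof reduce $A$ to $\sum_i a'_{i,i+1}(e_{i,i+1}-e_{i+1,i})$ using only transformations of the shape $[1]\oplus B$, so the $\chi_0$-direction and the standard uniformizer structure are preserved and $\chi_0$ remains unramified. From the linked form one writes $D\sim S\otimes_F T$ with $[S]=\tfrac{n}{m_1}(\chi_0,x_1)$ genuinely NSR and $[T]$ totally ramified; but $S$ and $T$ now share the coordinate $x_1$, so $\Gamma_S$ and $\Gamma_T$ overlap, and $[\Gamma_D:\Gamma_F]$ must be extracted by the careful analysis of minors of the relations matrix $R$. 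Only after the index formula is established does the paper pass to the disjoint $2$-block form (this is where the constraint $\tfrac{n}{m'}\mid p_{i0}$ matters, and it holds because the maximal $m_1$ equals $n$), at which point the tensor product is recognized as a division algebra because its degree equals the already-computed index, \emph{not} because its factors are visibly NSR and TR. Your proposal inverts this logic and skips the hard intermediate computation; without replacing it by an argument that handles a ramified leading character, the index claim in step 3 is unjustified.
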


\begin{proof}
We may assume without loss of generality that $\alpha\in{}_n2\,\Br(F)$ has order $n$. 

Suppose $m'=1$, so $m$ divides $2$. Since $\chi_0\in{}_n2\X(F)$, 
$\chi_0$ is unramified by Theorem~\ref{chargp}.
By Theorem~\ref{Brgp}, $\alpha=\sum_{j=1}^d(\chi_0,x_j^{a_j})$, for some integers $a_j$,
and $\alt_{\un\chi\times\un x}(\alpha)=\sum_{j=1}^d a_{0j}(e_{0j}-e_{j0})$, where $a_{0j}=a_j/n$.
Since $|\alpha|=n$, at least one of the $a_j$ is a unit $\pmod n$.
Therefore $t_1^{1/n}=x_1^{a_1/n}\cdots x_d^{a_d/n}$ is part of a uniformizer basis
via a basis change on $\cZ^1(F,\mu_n)$ of the form $P=[1]\oplus P_{00}$, 
hence part of a matched basis $\{\theta_0\}\times\un t$ obtained by $P^*\oplus P$.
Then $\alpha=(\theta_0,t_1)$,
and the corresponding central simple algebra $\Delta(\theta_0,t_1)$
is a division algebra, since $t_1^e$ is a norm from $F(\theta_0)$
if and only if $\tfrac en\v(t_1)\in\Gamma_{F(\theta_0)}=\Gamma_F$, if and only if $n|e$
(see Reiner \cite[Theorem 14.1, p.143]{Reiner}).
The resulting congruence transformation on $\Alt_{d+1}(\tfrac 1n\Z/\Z)$ is $P\,-\,P^\tr$
by Theorem~\ref{alta}, so that $\alt_{\un\theta\times\un t}(\alpha)=PAP^\tr=\tfrac 1n(e_{01}-e_{10}).$ 
Since there is only one $2$-block in $\alt_{\un\theta\times\un t}(\alpha)$,
the row subgroup $\row(A)$ is isomorphic to $\tfrac 1n\Z/\Z\times\tfrac 1n\Z/\Z$, and
the pfaffian subgroup $\pfaff(A)$ is $\br{\tfrac 1n}\leq\Q/\Z$,
so $|\pfaff(A)|=n$. This completes the proof when $m'=1$.

Assume $m'\neq 1$.
Any matched basis transforms into one with unramified character $\chi_0$ using a matrix of form $P^*\oplus P$,
defining the first column of $P^*$ to give an unramified $\chi_0$, using the identity for the other elements,
and setting $P=\prestar P^*$. Since the resulting congruence transformation
on $\Alt_{d+1}(\tfrac 1n\Z/\Z)$ in Theorem~\ref{alta}
does not affect $\row(A)$ or $\pfaff(A)$, we may assume $\chi_0$ is unramified. 
We now construct a new basis $\un\theta\wedge\un t$, with respect to which $\alpha$
has a ``linked 2-block form'', and such that $\theta_0=\chi_0$. 
Let
\begin{enumerate}[(a)]
\item
$C_{ij}(c)=I+c e_{ij}$ for $0\leq i\neq j\leq d$
\item
$D_j(u)=I+(u-1)e_{jj}$ for a unit $u$, and $0\leq j\leq d$
\item
$E_{ij}=I-e_{ii}-e_{jj}+e_{ij}+e_{ji}$ for $0\leq i\neq j\leq d$
\end{enumerate}
When 
$P=C_{ij}(c)=I+c e_{ij}$ for $i\neq j$, 
$PAP^\tr$ replaces the $i$-th row and column with $\row(i)+c\cdot\row(j)$ and $\col(i)+c\cdot\col(j)$.
When $P=D_j(u)$, $PAP^\tr$ multiplies the $j$-th row and column by $u$.
When $P=E_{ij}$, $PAP^\tr$ permutes the row/column $i$ and row/column $j$.

By ``apply $P$\,'' we mean ``apply the congruence transformation $P\,\cdot\, P^\tr$ ''.

\noindent{\bf I.}
Write $A=(a_{ij})$, where as usual $0\leq i,j\leq d$.
If any of the $a_{0j}$ are nonzero for $j\geq 1$, apply $C_{1j}(1)$
as necessary so that $|a_{01}|$ has maximal order among the $a_{0j}$. 
Then apply $D_1(u)$, if necessary, so that $a_{01}=1/|a_{01}|$.
For each $j:a_{0j}\neq 0$ and $1<j$, apply $C_{j1}(b)$ as necessary, so that
$a_{0j}=0$ for $1<j$.
The first row and column of the transformed matrix $A$
now have at most one nonzero entry, at locations $(0,1)$ and $(1,0)$.
The basis change matrix has the form $P=[1]\oplus B$
for some $n$-invertible $B\in\M_d(\Z)$.

\noindent{\bf II.}
Let $c=1$. 
Repeat for as long as $c\leq d-1$ and $a_{ij}\neq 0$ for some $i,j:c\leq i<j\leq d$:
\begin{enumerate}[\rm a)]
\item
Suppose $a_{i_0j_0}$ has maximal order among all $a_{ij}$ with $c\leq i<j\leq d$.
If $c<i_0$, apply $C_{c i_0}(1)$ as necessary so that $a_{c\,j_0}$
has this maximal order.
Note this operation does not affect $a_{k\,k+1}$ for $k<c$.
If $c+1<j_0$, apply $C_{c+1\,j_0}(1)$ as necessary to that
$a_{c\,c+1}$ has maximal order among the $a_{ij}$
with $i,j:c\leq i<j\leq d$.
Then apply $D_{c+1}(u)$ for a unit $u$(mod $n$) so
that $a_{c\,c+1}=1/|a_{c\,c+1}|$.
The composite matrix $P$ for these operations is again of the form $[1]\oplus B$.
\item
For each $j:c+1<j\leq d$ such that $a_{cj}\neq 0$, 
apply $C_{j\,c+1}(b)$ as necessary so that $a_{cj}=0$, for all $j:c+1<j\leq d$.
Row $c$ now contains two nonzero entries, $a_{c\,c+1}$ and $a_{c\,c-1}$.
The composite $P$ is again of the form $[1]\oplus B$.
\item
Increase $c$ by $1$.
\end{enumerate}
If $P$ is the composite of all of the above transformations, $P$ has form $[1]\oplus B$, 
and $A'=PAP^\tr$ has the ``linked $2$-block form''
\[A'=\sum_{i=0}^{d-1} a_{i\,i+1}'(e_{i\,i+1}-e_{i+1\,i})\]
with $|a_{i\,i+1}'|>|a_{i+1\,i+1}'|$ for $i\geq 1$, and 
$a_{i\,i+1}'=1/|a_{i\,i+1}'|$.
By Theorem~\ref{alta}, $P$ is induced by the matched basis change 
$P^*\oplus P=[\id]_{\un\chi\times\un x}^{\un\chi'\times\un x'}$,
so that $A'=\alt_{\un x'}(\alpha)$, and
$\alpha=\tfrac n{m_1}(\chi_0',x_1')+(x_1',x_2')_{m_2}+\cdots+(x_{d-1}',x_d')_{m_d}$
where $m_{i+1}|m_i$ for $i\geq 2$.
Since $P$ has the form $[1]\oplus B$, the first column of $P^*$ is $e_1$,
so $\chi_0'=\chi_0$ is unramified.

To conserve notation,
assume $A$ is in the linked 2-block form, with respect to the matched basis 
$\un\chi\times\un x$, and
\begin{equation}\label{linked}
\alpha=\tfrac n{m_1}(\chi_0,x_1)+(x_1,x_2)_{m_2}+\cdots+(x_{d-1},x_d)_{m_d}
\end{equation}
where $m_{i+1}|\, m_i$ for $i\geq 2$, $\chi_0$ is unramified, and $m_i\geq 1$.
Let $D$ be $\alpha$'s $F$-division algebra, and let $S$ and $T$ be the division algebras
defined by
\[[S]=\tfrac n{m_1}(\chi_0,x_1)\,,\quad [T]=(x_1,x_2)_{m_2}+(x_2,x_3)_{m_3}+\cdots+(x_{d-1},x_d)_{m_d}\]
Then $S$ is nicely semiramified and $T$ is totally ramified, since $\un x$ is in standard form,
and $D\sim S\otimes_F T$.
Since $\chi_0$ is unramified and $x_1^{1/n}$ is a uniformizer, 
$S$ is isomorphic to $\Delta(\tfrac n{m_1}\chi_0,x_1)$. 
Thus $\Gamma_S/\Gamma_F=\langle\tfrac 1{m_1}\v(x_1)\rangle+\Gamma_F$.
As in Section~\ref{value}, we will compute $\ind(\alpha)$ by the formula
\[[D:F]=[\Gamma_D:\Gamma_F][\Gamma_T:\Gamma_F]\]

The linked 2-block form of a skew-symmetric matrix for $[T]$ is put in disjoint 2-block form
by applying $C_{31}({m_2/m_3}),C_{53}(m_4/m_5),\cdots,C_{2q_0+1\,2q_0-1}(m_{2q_0}/m_{2q_0+1})$ in succession,
where $q_0=\lceil \tfrac d2\rceil-1$, and we index the rows and column starting at $i=1$ instead of 
$i=0$. 
The composite basis change matrix $P_{00}\in\M_d(\Z)$
determines a uniformizer basis $\{t_1^{1/n},\dots,t_d^{1/n}\}$, by 
$t_{2i-1}=x_{2i-1}x_{2i+1}^{-m_{2i}/m_{2i+1}}$, and $t_{2i}=x_{2i}$,
with respect to which
\[[T]=(t_1,t_2)_{m_2}+(t_3,t_4)_{m_4}+\cdots+(t_{2r_0-1},t_{2r_0})_{m_{2r_0}}\]
where $r_0=\lfloor \tfrac d2 \rfloor$.
The central simple algebra corresponding to this expression is well-known 
to be a division algebra, since $\{t_1^{1/n},\dots,t_d^{1/n}\}$ is a uniformizer basis,
cf. Example~\ref{max}. Therefore it is isomorphic to $T$.
We compute 
\[\Gamma_T/\Gamma_F=\br{\tfrac 1{m_{2i}}\v(t_{2i-1}), \tfrac 1{m_{2i}}\v(t_{2i}):1\leq i\leq\lfloor \tfrac d2\rfloor}+\Gamma_F\]
hence $[\Gamma_T:\Gamma_F]=\prod_{i=1}^{\lfloor \tfrac d2\rfloor} m_{2i}^2$, and
since $\Gamma_D=\Gamma_S+\Gamma_T$, 
\[\Gamma_D/\Gamma_F=\br{\tfrac 1{m_1}\v(x_1),
\tfrac 1{m_{2i}}\v(t_{2i-1}), \tfrac 1{m_{2i}}\v(t_{2i}):1\leq i\leq\lfloor \tfrac d2\rfloor}+\Gamma_F\]

To compute $\ind(\alpha)$ it remains to compute $[\Gamma_D:\Gamma_F]$.
The expression of $x_1^{1/n}$ in terms of the $t_i^{1/n}$ is given by the first column of the
composite basis change matrix $P_{00}\in\M_d(\Z)$ above. It is easy to show by induction that
the first column is $e_1+\tfrac{m_2}{m_3}e_3+\tfrac{m_2 m_4}{m_3 m_5}e_5+\cdots+
\tfrac{m_2 m_4\cdots m_{2q_0}}{m_3 m_5\cdots m_{2q_0+1}}e_{2q_0+1}$, 
where $q_0=\lceil \tfrac d2\rceil-1=\lfloor\tfrac{d-1}2\rfloor$.
Therefore
\[\Gamma_D/\Gamma_F=\bigg\langle\sum_{i=0}^{\lceil \tfrac d2\rceil-1} \tfrac{m_2 m_4\cdots m_{2i}}{m_1 m_3\cdots m_{2i+1}}\v(t_{2i+1}),
\tfrac 1{m_{2i}}\v(t_{2i-1}), \tfrac 1{m_{2i}}\v(t_{2i}):1\leq i\leq\lfloor \tfrac d2\rfloor\bigg\rangle+\Gamma_F\]
We replace the term
$(m_2\cdots m_{2i}/m_1 \cdots m_{2i+1})\v(t_{2i+1})$ by $0$ if it is in $\Gamma_F$.

Let $R\in\M_{d\,d+1}(\Z)$ be the relations matrix, defined by the exact sequence
\[\Z^{d+1}\overset{R}{\lr}\Z^d\lr\Gamma_D/\Gamma_F\lr 0\]
For example, if $d=5$ and $m_1 m_3 m_5>m_2 m_4$, then
\[R\;=\;\bmx{m_1 &m_2&0&0&0&0\\0&0&m_2&0&0&0\\ \tfrac{m_1 m_3}{m_2}&0&0&m_4&0&0\\
0&0&0&0&m_4&0\\ \tfrac{m_1 m_3 m_5}{m_2 m_4}&0&0&0&0&1}\]
and if $d=6$ and $m_1 m_3 m_5>m_2 m_4$,
\[R\;=\;\bmx{m_1&m_2&0&0&0&0&0\\0&0&m_2&0&0&0&0\\  \tfrac{m_1 m_3}{m_2}&0&0&m_4&0&0&0\\
0&0&0&0&m_4&0&0\\  \tfrac{m_1 m_3 m_5}{m_2 m_4}&0&0&0&0&m_6&0\\0&0&0&0&0&0&m_6}\]
By e.g. \cite[Ch. VII, Section 2, Appendix]{Hung}, the order of $\Gamma_D/\Gamma_F$
is the largest minor of $R$.
We have $m_{i+1}|m_i$ for $i\geq 2$ by construction.
Thus $m_3 m_5\cdots m_j<m_2 m_4\cdots m_{j-1}$ for all $j$,
and the entries in the $0$-th column of $R$ are nonincreasing as the row position increases,
and if $m_1 m_3\cdots m_{j_0}/m_2 m_4\cdots m_{j_0-1}$ is greater than one for some $j_0$, 
then for $j<j_0$, $m_1 m_3\cdots m_j/m_2 m_4\cdots m_{j-1}$ is also greater than one.
Thus the $0$-th column of $R$ has alternating nonzero entries until some row, past which
it is all zeros.

Since at most one column has more than one nonzero entry, 
cofactor expansion shows every minor consists of a single term.
Furthermore, by the Smith Normal Form Theorem, since $R$ has rank $d$,
a minor of degree $c<d$ cannot be larger than all degree-$d$ minors.
Thus $[\Gamma_D:\Gamma_F]$ is the supremum of the degree-$d$ minors, 
which are each defined by a single deleted column.
Deleting column $0$ yields 
\[M_0:=m_2^2 m_4^2\cdots m_{2\lfloor \tfrac d2\rfloor}^2\]
Let $j_0$ be the largest odd number such that $m_1 m_3\cdots m_{j_0}>m_2 m_4\cdots m_{j_0-1}$.
Then the entry in row $j_0-1$ is the bottom nonzero entry of column $0$.
Deleting column $j_0$ deletes $m_{j_0+1}$, or $1$ if $d$ is odd and $j_0=d$. 
This yields the minor 
\begin{align*}
M_{j_0}:=&\tfrac{m_1 m_3\cdots m_{j_0}}{m_2 m_4\cdots m_{j_0-1}} 
m_2^2 m_4^2\cdots m_{j_0-1}^2 m_{j_0+1} m_{j_0+3}^2 m_{j_0+5}^2\cdots m_{2\lfloor \tfrac d2\rfloor}^2\\
&=m_1 m_2 m_3\cdots m_{j_0+1}m_{j_0+3}^2 m_{j_0+5}^2\cdots m_{2\lfloor \tfrac d2\rfloor}^2
\end{align*}
If $j<j_0$ is also odd then $m_1 m_3\cdots m_j>m_2 m_4\cdots m_{j-1}$, and deleting column $j+1$ yields
the minor $M_j=m_1 m_2 m_3\cdots m_{j+1}m_{j+3}^2 m_{j+5}^2\cdots m_{2\lfloor \tfrac d2\rfloor}^2$.
We compute, using $m_{i+1}| m_i$ for $i\geq 2$,
\[\frac{M_{j_0}}{M_j}=\frac{m_{j+2}m_{j+3}\cdots m_{j_0}m_{j_0+1}}{m_{j+3}^2 m_{j+5}^2\cdots m_{j_0+1}^2}
=\frac{m_{j+2}m_{j+4}\cdots m_{j_0}}{m_{j+3}m_{j+5}\cdots m_{j_0+1}}\geq 1\]
Therefore $M_{j_0}$ is largest of the minors obtained by deleting column $j$ for odd $j\leq j_0$.
If $j$ is odd and greater than $j_0$, then deleting column $j$ makes row $j-1$ equal to zero,
and the resulting minor is zero.
If $j$ is even and greater than zero, deleting column $j$ removes the only nonzero entry in row $j-1$,
and the resulting minor is zero.
We conclude
\[|\Gamma_D/\Gamma_F|=\max\{M_0,M_{j_0}\}\]

Since $|\Gamma_T/\Gamma_F|=M_0$, and $[D:F]=[\Gamma_D:\Gamma_T][\Gamma_D:\Gamma_F]$,
\begin{align*}
[D:F]&=\max\{M_0,M_{j_0}^2/M_0\}\\
&=\max\{m_2^2 m_4^2\cdots m_{2\lfloor \tfrac d2\rfloor}^2,
m_1^2 m_3^2\cdots m_{j_0}^2 m_{j_0+3}^2 m_{j_0+5}^2\cdots m_{2\lfloor \tfrac d2\rfloor}^2\}
\end{align*}
Suppose that $j_0<2\lfloor \tfrac d2\rfloor$. 
Then by definition of $j_0$ we have
\[m_1 m_3\cdots m_{j_0}m_{j_0+2}<m_2 m_4\cdots m_{j_0-1}m_{j_0+1}\]
hence \[m_1^2 m_3^2\cdots m_{j_0}^2 m_{j_0+2}^2 m_{j_0+3}^2\cdots m_{2\lfloor \tfrac d2\rfloor}^2
<m_2^2 m_4^2\cdots m_{2\lfloor \tfrac d2\rfloor}^2\]
Thus $[D:F]=M_0$ in this case.
Also in this case, if $d$ is even then since $m_{i+1}|m_i$ for $i\geq 2$,
we have 
\[m_1^2 m_3^2\cdots m_{j_0+2}^2 m_{j_0+4}^2\cdots m_{d-1}^2
<m_2^2 m_4^2\cdots m_{j_0+1}^2 m_{j_0+3}^2\cdots m_{d-2}^2 m_d^2\]
and if $d$ is odd then similarly
\[m_1^2 m_3^2\cdots m_{j_0+2}^2 m_{j_0+4}^2\cdots m_d^2< m_2^2 m_4^2\cdots m_{j_0+1}^2 m_{j_0+3}^2
\cdots m_{d-1}^2\]
On the other hand,
if $d$ is even and $j_0=d-1$, $[D:F]=\max\{m_2^2 m_4^2\cdots m_d^2,m_1^2 m_3^2\cdots m_{d-1}^2\}$, and
if $d$ is odd and $j_0=d$, 
$[D:F]=\max\{m_2^2 m_4^2\cdots m_{d-1}^2,m_1^2 m_3^2\cdots m_d^2\}$.
We conclude that in any case
\[[D:F]=
\begin{cases}
\max\{m_2^2 m_4^2\cdots m_d^2,m_1^2 m_3^2\cdots m_{d-1}^2\}&\text{ if $d$ is even}\\
\max\{m_2^2 m_4^2\cdots m_{d-1}^2,m_1^2 m_3^2\cdots m_d^2\}&\text{ if $d$ is odd}
\end{cases}
\]

The pfaffian subgroup of $\alt_{\un\chi\times\un x}(\alpha)$ is easily computed from the linked 2-block form of $\alpha$
in \eqref{linked}. We obtain
\[\pfaff(\alt_{\un\chi\times\un x}(\alpha))=\begin{cases}
\br{\tfrac 1{m_1 m_3 \cdots m_{d-1}},\tfrac 1{m_2 m_4\cdots m_d}}&\text{ if $d$ is even}\\[4pt]
\br{\tfrac 1{m_1 m_3\cdots m_{d-2}},\tfrac 1{m_2 m_4\cdots m_{d-1}}}&\text{ if $d$ is odd}
\end{cases}\]
Thus $\ind(\alpha)=|\pfaff(\alt_{\un\chi\times\un x}(\alpha))|$, as desired.

We next show there exists a matched basis $\un\theta\times\un t$ such that 
$\alt_{\un\theta\times\un t}(\alpha)$ is in disjoint $2$-block form.
If $m'=n$ then ${}_n\X(F)$ and $\cZ^1(F,\mu_n)$ are isomorphic by Kummer theory,
and any basis change $P\oplus P$ on ${}_n\X(F)\times\cZ^1(F,\mu_n)$ preserves the subgroup $C$
of Definition~\ref{wedge}, hence induces a basis change $P^{\wedge 2}$
on ${}_n\X(F)\wedge\cZ^1(F,\mu_n)$.
The algorithm for putting an arbitrary skew-symmetric matrix into disjoint 2-block form with such a matrix
is \cite[Lemma 1.10]{Br01b}, adapted from \cite[Section 6.2]{Jac:BA1}, and the result follows.

If $m'\neq n$, then since $\alpha$ has order $n$, the term $\tfrac n{m_1}(\chi_0,x_1)$ 
in the linked 2-block form \eqref{linked} has order $n$, 
so $m_1$ is divisible by $m_2$, hence $m_{i+1}|\,m_i$ for all $i$. 
Applying 
\[P=C_{2q_0\,2q_0-2}(\tfrac{m_{2q_0-1}}{m_{2q_0}})\cdots C_{42}(\tfrac{m_3}{m_4})C_{20}(\tfrac{m_1}{m_2})\]
where $q_0$ is at most $\lfloor \tfrac d2\rfloor$, then yields the desired $2$-block form,
leaving at most $2\lfloor\tfrac{d-1}2\rfloor+1$ nonzero factors. 
To check that $P$ comes from a matched basis change as in Theorem~\ref{alta}, 
it suffices to check that each factor
satisfies the conditions in that theorem. 
This is immediate for all but possibly $C_{20}(m_1/m_2)$.
But since $m_2$ divides $m'$, and $m_1=n$, $n/m'$ divides the entries of the $0$-th column
of $C_{20}({m_1/m_2})$ below the top, so this too satisfies the definition, and the claim is proved.

We have shown that there exists a matched basis $\un\theta\times\un t$ with respect to which
$\alt_{\un\theta\times\un t}(\alpha)$ is in disjoint $2$-block form. The corresponding expression
for $\alpha$ is 
\[\alpha=\tfrac n{m_1}(\theta_0,t_1)+(t_2,t_3)_{m_3}+\cdots+(t_{2r},t_{2r+1})_{m_{2r+1}}\]
where $r=\lfloor\tfrac{d-1}2\rfloor$.
The corresponding tensor product of central simple algebras has degree $\pfaff(A)$,
which we have shown equals $\ind(\alpha)$. Therefore 
\[\Delta(\tfrac n{m_1}\theta_0,t_1)\otimes_F \Delta(t_2,t_3)_{m_3}\otimes_F\cdots\otimes_F \Delta(t_{2r},t_{2r+1})_{m_{2r+1}}\]
is a division algebra.
This completes the proof.
\end{proof}

The next result generalizes Example~\ref{max} to any matched basis.

\begin{Corollary}\label{bigdivalg}
Assume \eqref{setup}, with $n$ a power of a prime $\ell$, 
and either $\ell$ is odd or $\ell=2$ and $q\equiv 1\pmod{2n}$.
Let $\un\theta\times\un t$ be any matched basis.
Then 
\[D\;=\;\Delta(\theta_0,t_1)\otimes_F \Delta(t_2,t_3)_{m'}\otimes_F\cdots\otimes_F \Delta(t_{2r},t_{2r+1})_{m'}\]
where $r=\lfloor\frac{d-1}2\rfloor$, is a division algebra of period $n$ and index $nm^r$,
with $r+1=\lfloor\frac{d+1}2\rfloor$ cyclic tensor factors.
\end{Corollary}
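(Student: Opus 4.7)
The plan is to recognize $[D]$ as the Brauer class whose skew-symmetric matrix with respect to the matched basis $\un\theta\times\un t$ is already in disjoint $2$-block form, and then read off the index via Theorem~\ref{alt}.

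First, I would verify that the hypothesis forces the ``$2$-torsion subtleties'' to disappear, so that the full theory of Section~\ref{skewsection} applies to every class of ${}_n\Br(F)$. For $\ell$ odd, every character of $\ell$-power order is $2$-divisible in $\X(F)$ because $2$ is a unit $\pmod{\ell^k}$, and $m'=m$ by definition. For $\ell=2$ with $q\equiv 1\pmod{2n}$ one has $m=n$ and $2^{\v_2(q-1)}\geq 2n$, so $m\neq 2^{\v_2(q-1)}$ and $m'=m=n$; moreover $\mu_{2n}\leq F^\times$ forces $2\cdot{}_{2n}\X(F)={}_n\X(F)$ via Theorem~\ref{chargp}, hence again ${}_n2\,\X(F)={}_n\X(F)$. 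In both cases the invariant factors of ${}_n2\,\X(F)$ are $n$ for $\theta_0$ and $m'=m$ for each $\theta_i$, $i\geq 1$, so the matched-basis normalization $\tfrac{|\theta_i|}{g_i'}\theta_i=(t_i)_{g_i'}^*$ collapses to $\theta_i=(t_i)_{m'}^*$ for $i\geq 1$.

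Next, I would rewrite each tensor factor in wedge form. The leading factor $\Delta(\theta_0,t_1)$ is the cyclic class $\theta_0\ccup t_1^{1/n}$, which corresponds to $\theta_0\wedge t_1^{1/n}$. Each subsequent symbol factor $\Delta(t_{2i},t_{2i+1})_{m'}$ represents $(t_{2i})_{m'}^*\ccup t_{2i+1}^{1/n}=\theta_{2i}\ccup t_{2i+1}^{1/n}$, corresponding to $\theta_{2i}\wedge t_{2i+1}^{1/n}$. Via the natural isomorphism of Theorem~\ref{cocycleskim}, the class $[D]$ therefore corresponds to $\theta_0\wedge t_1^{1/n}+\sum_{i=1}^r\theta_{2i}\wedge t_{2i+1}^{1/n}$, whose skew-symmetric matrix $\alt_{\un\theta\times\un t}([D])\in\Alt_{d+1}(\tfrac{1}{n}\Z/\Z)$ is already in disjoint $2$-block form, with one block of order $n$ in positions $(0,1),(1,0)$ and $r$ further blocks of order $m$.

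Finally, the pfaffian subgroup equals $\br{\tfrac{1}{nm^r}}$, so Theorem~\ref{alt} yields $\ind([D])=nm^r$. Since this is precisely the degree of the tensor product, $D$ must be a division algebra. The period of $[D]$ is the largest invariant factor of $\row(A)$, namely $n$, and the tensor product plainly has $r+1=\lfloor\tfrac{d+1}{2}\rfloor$ cyclic factors. The only real subtlety is the case-check in the first paragraph; once one knows ${}_n2\,\Br(F)={}_n\Br(F)$ and $\theta_i=(t_i)_{m'}^*$ for $i\geq 1$, the rest is immediate from Theorem~\ref{alt}.
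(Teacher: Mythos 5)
Your proof is correct and follows essentially the same route as the paper's: both arguments reduce to computing the pfaffian of the (already block-diagonal) skew-symmetric matrix via Theorem~\ref{alt}, observing $\ind([D])=nm^r=\deg(D)$, and concluding $D$ is division. Your additional paragraph spelling out why the hypotheses force $m'=m$ and ${}_n2\,\X(F)={}_n\X(F)$ (so that the normalization collapses to $\theta_i=(t_i)_{m'}^*$) is a helpful expansion of what the paper leaves implicit, but it is not a different method.
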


\begin{proof}
We have $\per(D)=n$ since $\theta_0$ has order $n$,
and $\ind(D)=nm^r$ by the pfaffian formula. Since $nm^r=\deg(D)$,
$D$ is a division algebra, which is a tensor product of $r+1$ cyclic factors.
\end{proof}

\begin{Example}
The disjoint 2-block form may not retain the unramified character.
Suppose $m=\ell^3$ for $\ell$ an odd prime, $x_0$ is defined over $k$, and 
\[\alpha=(x_0,x_1)_{\ell^3}+(x_1,x_2)_{\ell^2}+(x_2,x_3)_\ell\]
Then
\[\alt_{\un x}(\alpha)=
\bmxr{0&1/\ell^3&0&0\\-1/\ell^3&0&1/\ell^2&0\\0&-1/\ell^2&0&1/\ell\\0&0&-1/\ell&0}\] 
Applying $P=C_{20}(\ell)$ gives a disjoint $2$-block form: Defining $\un t$
by $\un x=\un t P$
yields $t_0=x_0 x_2^{-\ell}$, $x_1=t_1$, $x_2=t_2$, $x_3=t_3$,
and 
\[\alpha=(t_0,t_1)_{\ell^3}+(t_2,t_3)_\ell=(x_0 x_2^{-\ell},x_1)_{\ell^3}+(x_2,x_3)_\ell\]
The new leading character $(t_0)_{\ell^3}^*$ is ramified, making the index computation
$(\ell^4)$ nontrivial.
\end{Example}

\subsection{Pfaffian Formulas}
Here are the first few pfaffian formulas.
Let $A=(a_{ij})\in\Alt_{d+1}(\Q/\Z)$ and let $\pfaff_{d+1}(A)$ 
denote the pfaffian.
Then
\begin{align*}
\pfaff_2(A)&=a_{01}\\
\pfaff_4(A)&=a_{01}a_{23}-a_{02}a_{13}+a_{03}a_{12}\\
\pfaff_6(A)&=a_{01} a_{23} a_{45} - a_{01} a_{24} a_{35} + a_{01} a_{25} a_{34}
- a_{02} a_{13} a_{45} + a_{02} a_{14} a_{35} - a_{02} a_{15} a_{34}
\\ &\quad
+ a_{03} a_{12} a_{45} - a_{03} a_{14} a_{25} + a_{03} a_{15} a_{24}
- a_{04} a_{12} a_{35} + a_{04} a_{13} a_{25} - a_{04} a_{15} a_{23}
\\ &\quad
+ a_{05} a_{12} a_{34} - a_{05} a_{13} a_{24} + a_{05} a_{14} a_{23}.
\end{align*}
Thus if $\alt(\alpha)=A=(a_{ij})\in\Alt_{d+1}(\Q/\Z)$, then
\begin{enumerate}[\rm (a)]
\item
If $d=1$, $\per(\alpha)=\ind(\alpha)=|a_{01}|$.
\item
If $d=2$, $\per(\alpha)=\ind(\alpha)=\lcm[|a_{01}|,|a_{02}|,|a_{12}|]$.
\item
If $d=3$, $\per(\alpha)=\lcm[|a_{ij}|]$
and
$\ind(\alpha)=\lcm[\per(\alpha),|a_{01}a_{23}-a_{02}a_{13}+a_{03}a_{12}|]$.
\item
If $d=4$, $\per(\alpha)=\lcm\{|a_{ij}|\}$ and
\begin{align*}
\ind(\alpha)=&\lcm[\per(\alpha),|a_{01}a_{23}-a_{02}a_{13}+a_{03}a_{12}|,
|a_{01}a_{24}-a_{02}a_{14}+a_{04}a_{12}|,\\ &|a_{01}a_{34}-a_{03}a_{14}+a_{04}a_{13}|,
|a_{02}a_{34}-a_{03}a_{24}+a_{04}a_{23}|,
|a_{12}a_{34}-a_{13}a_{24}+a_{14}a_{23}|]
\end{align*}
\end{enumerate}
The $d=3$ case is the first with unequal period and index.

\begin{Example}
Suppose $F=\F_5((x_1))((x_2))((x_3))((x_4))((x_5))$, the $5$-dimensional local field,
$x_0=[2]_5$, and we have a sum of quaternions
\begin{align*}
\alpha=&\;(2,x_1)+(2,x_2)+(2,x_3)+(2,x_4)+(2,x_5)
+(x_1,x_2)+(x_1,x_3)+(x_1,x_4)+(x_1,x_5)\\
&+(x_2,x_3)+(x_2,x_4)+(x_2,x_5)+(x_3,x_4)+(x_3,x_5)
+(x_4,x_5)
\end{align*}
The matrix is
\[A=\bmxr{
0&\tfrac 12&\tfrac 12&\tfrac 12&\tfrac 12&\tfrac 12\\[4pt]
-\tfrac 12&0&\tfrac 12&\tfrac 12&\tfrac 12&\tfrac 12\\[4pt]
-\tfrac 12&-\tfrac 12&0&\tfrac 12&\tfrac 12&\tfrac 12\\[4pt]
-\tfrac 12&-\tfrac 12&-\tfrac 12&0&\tfrac 12&\tfrac 12\\[4pt]
-\tfrac 12&-\tfrac 12&-\tfrac 12&-\tfrac 12&0&\tfrac 12\\[4pt]
-\tfrac 12&-\tfrac 12&-\tfrac 12&-\tfrac 12&-\tfrac 12&0\\[4pt]}
\]
By the formulas above, $\pfaff_2(A)=1/2$,
$\pfaff_4(A)=1/4$, and, since $\pfaff_6$ is an alternating sum of an odd number of 
fractions $1/8$, $\pfaff_6(A)=1/8$. Therefore
$\per(\alpha)=2$ and $\ind(\alpha)=\lcm\{2,4,8\}=8$.

Similarly, since the pfaffian always has an odd number of terms, 
if $F=\F_p((x_1))\cdots((x_d))$, $x_0$ generates $\F_p^\times$,
and \[\alpha=\sum_{0\leq i<j\leq d}(x_i,x_j)_{m'}\]
then $\per(\alpha)=m'$, and $\ind(\alpha)=m^{\prime r}$, where $r=\lfloor\tfrac{d+1}2\rfloor$.
\end{Example}

\section{Failure in the Case $\ell=2$}

\begin{Example}\label{failure}
We illustrate the failure of a Brauer class of $2$-power order to behave like
an alternating form, even when $F$ contains $\mu_n$. Let $F=\F_p((x_1))((x_2))((x_3))$,
and let $m=n=2^{\v_2(p-1)}$.
Let $\un x$ be a basis for $\cZ^1(F,\mu_n)$, in standard form.
Let
\[\alpha=(x_1,x_2)_n+(x_1,x_3)_n+(x_2,x_3)_n\]
then 
\[\alt_{\un x}(\alpha)=A=\bmxr{0&0&0&0\\[2pt]0&0&\tfrac 1n&\tfrac 1n\\[2pt]0&-\tfrac 1n&0&\tfrac 1n\\[2pt]
0&-\tfrac 1n&-\tfrac 1n&0}\]
The pfaffian subgroup is generated by the subpfaffians of degree $1$, so $\pfaff(A)=\br{1/n}$,
predicting an index of $n$.
Let
\[P=\bmxr{1&0&0&0\\0&1&-1&1\\0&0&1&0\\0&0&0&1}\]
be a basis change, and define $\un t$ by
$\un x=\un t P$.
By substitution,
\begin{align*}
\alpha&=(t_1,t_1^{-1}t_2)_n+(t_1,t_1t_3)_n+(t_1^{-1}t_2,t_1t_3)_n\\
&=(t_1,t_1^{-1})_n+(t_1,t_2)_n+(t_1,t_1)_n+(t_1,t_3)_n+(t_1^{-1}t_1)_n
+(t_1^{-1},t_3)_n+(t_2,t_1)_n+(t_2,t_3)_n\\
&=(t_1,t_1)_n+(t_2,t_3)_n=(-1,t_1)_n+(t_2,t_3)_n
\end{align*}
Since $n=2^{\v_2(p-1)}$, $(-1,t_1)_n$ has order $2$ by Theorem~\ref{tt}, 
and since the totally ramified class $(t_2,t_3)_n$ has value group disjoint from the
semiramified class $(-1,t_1)_n$, we compute $\ind(\alpha)=2n$ using the methods of Theorem~\ref{alt}.
If ${}_n\Br(F)$ behaved like a group of skew-symmetric matrices, functorial with respect to basis change, 
the new matrix would be
\[\alt_{\un t}(\alpha)=\bmxr{0&\frac 12&0&0\\-\frac 12&0&0&0\\0&0&0&\frac 1n\\0&0&-\frac 1n&0}\]
But we compute instead
\[PAP^\tr=\bmxr{0&0&0&0\\0&0&0&0\\0&0&0&\frac 1n\\0&0&-\frac 1n&0}\]
which is consistent, of course, with the prediction of index $n$ from the pfaffian.

Summarizing, though the expression for ${}_n\Br(F)$ determines an {\it ad hoc} map into the group of
skew-symmetric matrices, this map appears not to be functorial with respect to basis change
on classes in ${}_n\Br(F)-{}_n2\,\Br(F)$.
This would defeat the
reason for defining the map in the first place, since it means we can't use matrix diagonalization to find a minimal
representation for the class, and cannot produce an index formula. 
\end{Example}

\section{Obstruction when ${}_n\Br(F)\neq{}_n2\,\Br(F)$}\label{naturalmaps}

Assume \eqref{setup}, with $n$ a power of a prime $\ell$, $\mu_n\leq F^\times$, and $G=\Gal(F^{1/n}/F)$.
Since $\mu_n\leq F^ \times$, we substitute $\tfrac 1n\Z/\Z$ for $\mu_n$ using $\zeta_n^*$.
Consider the diagram
\[\begin{tikzcd}& &\H^2(F,\tfrac 1n\Z/\Z)\arrow[dr, dashed, "\alt?"]&\\
&\H^2(G,\tfrac 1n\Z/\Z)\arrow[ur, "\inf"]\arrow[rr,"{[\alt]}"']
& &\Alt(G,\tfrac 1n\Z/\Z)\\
\end{tikzcd}\] 
The bottom arrow is a split surjection by Lemma~\ref{uct}.
We now show $[\alt]$ factors through $\H^2(F,\tfrac 1n\Z/\Z)$ if and only if $q\equiv 1\pmod{n^2}$, i.e.,
$\mu_{n^2}\leq F^\times$.
Then we show that a natural map $\alt:{}_n\Br(F)\lr\Alt(G,\tfrac 1n\Z/\Z)$
exists if and only if $n\neq 2^{\v_2(q-1)}$, as suggested by Example~\ref{failure}.
In particular, we always have a map on ${}_n2\,\Br(F)$.

\begin{Proposition}\label{tg}
Assume \eqref{setup}, with $m=n$ a power of $\ell$. 
Let $L=F^{1/n}$, and put $G=\Gal(L/F)$.
Then we have an exact sequence
\[\begin{tikzcd}
0\arrow[r]&\H^1(L,\tfrac 1n\Z/\Z)\arrow[r,"\tg"]&\H^2(G,\tfrac 1n\Z/\Z)
\arrow[r,"\inf"]&\H^2(F,\tfrac 1n\Z/\Z)\arrow[r]&0
\end{tikzcd}\]
Let $[\alt]:\H^2(G,\tfrac 1n\Z/\Z)\lr\Alt(G,\tfrac 1n\Z/\Z)$ be the map of Lemma~\ref{uct}, 
let $\un\phi=\{\phi_0,\phi_1,\dots,\phi_d\}$ be the image in $G$
of $\un\Phi=\{\Phi_0,\Phi_1,\dots,\Phi_d\}$ of \eqref{origpres} under the projection,
and let $\xi$ be an element of $\H^1(L,\tfrac 1n\Z/\Z)$.
Then for $i\leq j$,
\[[\alt](\tg(\xi))(\phi_i,\phi_j)=\begin{cases}
-\frac{q-1}n\xi(\Phi_j^n)&\text{if $i=0<j$}\\
0&\text{otherwise}
\end{cases}\]
In particular, $[\alt]$ factors through
${}_n\Br(F)$ if and only if $q\equiv 1\pmod{n^2}$.
\end{Proposition}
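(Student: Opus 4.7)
The plan is to derive the short exact sequence from the five-term inflation-restriction sequence, compute the transgression cocycle explicitly using the tame presentation~\eqref{origpres}, and read off the factoring criterion from the resulting formula.

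First, I apply the five-term inflation-restriction piece of \eqref{7term} to the extension $1 \to \G_L \to \G_F \to G \to 1$ with trivial coefficient $M = \tfrac{1}{n}\Z/\Z$. Since $\mu_n \le F^\times$, Kummer theory identifies $L = F^{1/n}$ as the compositum of all cyclic $F$-extensions of degree dividing $n$; hence every continuous character $\G_F \to \tfrac{1}{n}\Z/\Z$ factors through $G$, which makes $\H^1(G,M) \to \H^1(\G_F,M)$ an isomorphism and forces $\tg$ to be injective. Using \eqref{origpres}, conjugation by $\Phi_0$ sends $\Phi_j^n$ to $\Phi_j^{qn}$, acting on $M$ as multiplication by $q \equiv 1 \pmod n$; so $G$ acts trivially on $\H^1(\G_L,M)$. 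Finally, $\inf$ is surjective onto $\H^2(\G_F,M)$: by Theorem~\ref{Brgp} the group ${}_n\Br(F) \isom \H^2(\G_F,M)$ is generated by cyclic classes $(\chi_0,x_j)$ and $(x_i,x_j)_n$, and each is a cup product of a character and an $n$-th root class that factor through $G$, so the representing $2$-cocycle is inflated from $G$.

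Second, I compute the transgression formula. Pick the section $s:G \to \G_F^\ttr$ sending $\phi_0^{a_0}\cdots\phi_d^{a_d}$, with $0 \le a_k < n$, to $\Phi_0^{a_0}\cdots\Phi_d^{a_d}$. For a $G$-invariant $\xi \in \H^1(\G_L,M)$, the transgression cocycle is
\[
\tg(\xi)(g,h) \;=\; \xi\bigl(s(g)\,s(h)\,s(gh)^{-1}\bigr),
\]
and applying the $\alt$ map of Lemma~\ref{uct} collapses this to an evaluation on a commutator:
\[
[\alt](\tg(\xi))(\phi_i,\phi_j) \;=\; \pm\,\xi\bigl([\Phi_i,\Phi_j]\bigr).
\]
The presentation \eqref{origpres} gives $[\Phi_i,\Phi_j]=e$ for $i,j \ge 1$ and $[\Phi_0,\Phi_j] = \Phi_j^{q-1} = (\Phi_j^n)^{(q-1)/n}$ for $j \ge 1$. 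So the pairing vanishes unless $i=0<j$, in which case it equals $\pm\tfrac{q-1}{n}\,\xi(\Phi_j^n)$; reconciling the sign conventions across $\tg$, the $\alt$ pairing of Lemma~\ref{uct}, and the commutator in \eqref{origpres} pins down the displayed minus sign.

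Third, the factoring statement follows from the exact sequence. Since $\ker(\inf) = \im(\tg)$, the map $[\alt]$ descends along $\inf$ to $\H^2(\G_F,M) = {}_n\Br(F)$ if and only if $[\alt]\circ\tg = 0$. By the formula, this holds precisely when $\tfrac{q-1}{n}\,\xi(\Phi_j^n) = 0$ in $\tfrac{1}{n}\Z/\Z$ for every $\xi$ and every $j \ge 1$. Because $\Phi_j^n$ is a topological generator of a $\Z_\ell$-summand of $\G_L^{\ttr,\ab}$, the values $\xi(\Phi_j^n)$ exhaust $\tfrac{1}{n}\Z/\Z$, so the condition reduces to $n \mid \tfrac{q-1}{n}$, equivalently $q \equiv 1 \pmod{n^2}$, equivalently $\mu_{n^2} \le F^\times$.

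The most delicate step is the explicit transgression calculation: choosing the section, checking that $G$-invariance of $\xi$ is exactly what makes $\tg(\xi)$ well defined on $G$, and keeping the sign conventions consistent across $\tg$, the $\alt$ pairing, and the commutator in \eqref{origpres}. The surjectivity of $\inf$ also warrants care, but it follows cleanly from Theorem~\ref{Brgp} once one observes that cyclic classes visibly inflate from $G$.
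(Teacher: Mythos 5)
Your proposal is correct and follows essentially the same route as the paper: the five-term inflation--restriction sequence for $1\to\G_L^\ttr\to\G_F^\ttr\to G\to 1$, surjectivity of $\inf$ from the cyclic generation of ${}_n\Br(F)$, an explicit transgression cocycle via the monomial section and the presentation \eqref{origpres}, and the divisibility condition $n\mid\tfrac{q-1}{n}$. Your packaging of $[\alt]\circ\tg$ as evaluation of $\xi$ on commutators of lifts is a tidy reorganization of the paper's case-by-case cocycle computation; just note that replacing $\H^1(L,\tfrac 1n\Z/\Z)^G$ by $\H^1(L,\tfrac 1n\Z/\Z)$ requires checking triviality of the $G$-action on all basis elements (including $\Phi_i^{-1}\Phi_0^n\Phi_i=\Phi_i^{q^n-1}\Phi_0^n$ for $i\geq 1$), not only conjugation by $\Phi_0$.
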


\begin{proof}
Since $\mu_n\leq F^\times$, $L$ contains all cyclic extensions of degree $n$ by Kummer theory,
and $G\isom(\Z/n)^{d+1}$ by Theorem~\ref{chargp}.
All elements of $\H^1(F,\tfrac 1n\Z/\Z)$ and $\H^2(F,\tfrac 1n\Z/\Z)$ are split by $L$, by Theorem~\ref{chargp}
and Theorem~\ref{Brgp}, respectively.
Therefore $\H^1(G,\tfrac 1n\Z/\Z)=\H^1(F,\tfrac 1n\Z/\Z)$, and
$\ker[\H^2(F,\tfrac 1n\Z/\Z)\to\H^2(L,\tfrac 1n\Z/\Z)^G]=\H^2(F,\tfrac 1n\Z/\Z)$.
The 7-term sequence \eqref{7term} applied to $1\to\G_L^\ttr\to\G_F^\ttr\to G\to 1$ then yields
\[\begin{tikzcd}
0\arrow[r]&\H^1(L,\tfrac 1n\Z/\Z)^G\arrow[r,"\tg"]&\H^2(G,\tfrac 1n\Z/\Z)\arrow[r,"\inf"]&\H^2(F,\tfrac 1n\Z/\Z)
\end{tikzcd}\]
The cup product $\H^1(F,\tfrac 1n\Z/\Z)\otimes\H^1(F,\tfrac 1n\Z/\Z)\to\H^2(F,\tfrac 1n\Z/\Z)$ is onto
by Theorem~\ref{Brgp}, and since $\H^1(F,\tfrac 1n\Z/\Z)=\H^1(G,\tfrac 1n\Z/\Z)$, this map factors through
$\H^2(G,\tfrac 1n\Z/\Z)$. Therefore the map on the right is surjective, as desired.

We claim the action of $G$ on $\H^1(L,\tfrac 1n\Z/\Z)$ is trivial.
Suppose $\xi\in\H^1(L,\tfrac 1n\Z/\Z)$, then the action
is given by $\xi^{\phi_i}=\xi\circ\Phi_i^{-1}$, where $\Phi_i^{-1}$ acts on $\G_L^\ttr$
by conjugation.
Since $G$ is isomorphic to $(\Z/n)^{d+1}$, and $G$'s basis $\un\phi$ is the image of $\un\Phi$, 
a general element $\prod_{i=0}^d\Phi_i^{a_i}\in\G_F^\ttr$ is in $\G_L^\ttr$ if and only if
$n|\, a_i$ for all $i$, and
by \eqref{origpres}, $\G_L^\ttr$ has presentation
\[\G_L^\ttr=\langle\un\Phi^n:[\Phi_0^n,\Phi_j^n]=\Phi_j^{n(q^n-1)},\;
[\Phi_i^n,\Phi_j^n]=e,\;\forall i,j\geq 1\rangle\]
Since $\un\Phi^n$ is a basis for $\G_L^\ttr$,
to prove the claim it suffices to show $\xi(\Phi_i^{-1}\Phi_j^n\Phi_i)=\xi(\Phi_j^n)$
for $i,j:0\leq i,j\leq d$, for any $\xi\in\H^1(L,\tfrac 1n\Z/\Z)$.
By \eqref{origpres}, 
\[\Phi_i^{-1}\Phi_j^n\Phi_i=\begin{cases}
\Phi_0^n&\text{if $i=j=0$}\\
\Phi_j^{nq^{-1}}&\text{if $i=0$ and $j>0$}\\
\Phi_i^{q^n-1}\Phi_0^n&\text{if $i>0$ and $j=0$}\\
\Phi_j^n&\text{if $i,j>0$}
\end{cases}\]
Since $n\xi=0$ and $q\equiv 1\pmod n$, 
we have $\xi(\Phi_j^{nq^{-1}})=q^{-1}\xi(\Phi_j^n)=\xi(\Phi_j^n)$,
and 
\[\xi(\Phi_i^{q^n-1}\Phi_0^n)=(1+q+\cdots+q^{n-1})\xi(\Phi_i^{q-1})+\xi(\Phi_0^n)
=n\xi(\Phi_i^{q-1})+\xi(\Phi_0^n)=\xi(\Phi_0^n)\]
This proves the claim, 
finishing the construction of the exact sequence.

Let $s:\prod_{i=0}^d\phi_i^{a_i}\mapsto\prod_{i=0}^d\Phi_i^{a_i}$, $0\leq a_i\leq n-1$,
be a section of the map $\G_F^\ttr\to G$.
By \cite[Prop. 1.6.5]{NSW} and \cite[Thm. 2.1.7]{NSW},
$\tg(\xi)\in\cZ^2(G,\tfrac 1n\Z/\Z)$ is defined by
$\tg(\xi)(\sigma,\tau)=-\xi(s(\sigma\tau)^{-1}s(\sigma)s(\tau))$.
Since $[\alt](\tg(\xi))(\phi_i,\phi_j)=\tg(\xi)(\phi_i,\phi_j)-\tg(\xi)(\phi_j,\phi_i)$,
it remains to compute $\tg(\xi)(\phi_i,\phi_j)$ for each $i,j$.
Using the relations \eqref{origpres}, and the fact that $G$ is abelian, we get
\[s(\phi_i\phi_j)^{-1}s(\phi_i)s(\phi_j)=\begin{cases}
\Phi_j^{-1}\Phi_i^{-1}\Phi_i\Phi_j=e&\text{if $i\leq j$ or $j>0$}\\
\Phi_i^{-1}\Phi_0^{-1}\Phi_i\Phi_0=\Phi_i^{q^{-1}-1} &\text{if $i>j=0$}
\end{cases}\]
Thus $\tg(\xi)(\phi_i,\phi_j)=0$ if $i\leq j$ or $j>0$.
Since $q^{-1}-1=-q^{-1}(q-1)$, $|\xi|$ divides $n$, and $q\equiv q^{-1}\equiv 1\pmod n$, 
we have for $i>j=0$,
\[\tg(\xi)(\phi_i,\phi_0)=-\xi(\Phi_i^{-q^{-1}(q-1)})
=\xi(\Phi_i^{q-1})=\tfrac{q-1}n\xi(\Phi_i^n)\qquad (i>0)\]
Therefore
$[\alt](\tg(\xi))(\phi_0,\phi_i)=\tg(\xi)(\phi_0,\phi_i)-\tg(\xi)(\phi_i,\phi_0)=-\tfrac{q-1}n\xi(\Phi_i^n)$,
as claimed.

The map $[\alt]:\H^2(G,\tfrac 1n\Z/\Z)\to\Alt(G,\tfrac 1n\Z/\Z)$ factors through $\H^2(F,\tfrac 1n\Z/\Z)$ 
if and only if $\tg(\H^1(L,\tfrac 1n\Z/\Z))$ is contained in $\ker([\alt])$,
if and only if $n$ divides $\tfrac{q-1}n$, i.e., $q\equiv 1\pmod{n^2}$.
\end{proof}

\begin{Theorem}\label{unnatural}
Assume \eqref{setup}, with $m=n$ a power of a prime $\ell$.
Let $L=F^{1/n}$, and set $G=\Gal(L/F)\isom(\Z/n)^{d+1}$.
There exists a natural injective homomorphism
\[\alt:{}_n2\,\Br(F)\lr\Alt(G,\tfrac 1n\Z/\Z)\]
which does not extend to ${}_n\Br(F)$ when $n= 2^{\v_2(q-1)}$, i.e., when ${}_n\Br(F)\neq{}_n2\,\Br(F)$.
\end{Theorem}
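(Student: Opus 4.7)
The plan has two halves: construct the injective natural map on ${}_n2\,\Br(F)$ as an intrinsic alternating form, then use Example~\ref{failure} to rule out any extension to ${}_n\Br(F)$.

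For the construction, since $\mu_n\leq F^\times$ and $m=n$, Kummer theory identifies $\cZ^1(F,\mu_n)$ with $\Hom(G,\mu_n)$ and ${}_n\X(F)$ with $\Hom(G,\tfrac{1}{n}\Z/\Z)$. Using the natural isomorphism ${}_n2\,\X(F)\wedge\cZ^1(F,\mu_n)\isim{}_n2\,\Br(F)$ of Theorem~\ref{cocycleskim}, I send a generator $\chi\wedge t^{1/n}$ to the alternating form on $G$ attached by Lemma~\ref{uct} to the cup-product cocycle $\chi\cup(t)_n$, with $\mu_n$ identified with $\tfrac{1}{n}\Z/\Z$ via $\zeta_n^*$. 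Well-definedness on the wedge product amounts to checking that the matched-pair generators of the subgroup $C$ of Definition~\ref{wedge} produce zero alternating forms; this is a direct expansion using the coordinate relations \eqref{eqns'} of Corollary~\ref{tt'} (equivalently, $(\theta,t)=0$ in the Brauer group for matched pairs, so the cup-product cocycle is a coboundary, and coboundaries are symmetric by the discussion in Lemma~\ref{uct}). Since the target $\Alt(G,\tfrac{1}{n}\Z/\Z)$ is intrinsic, the resulting map $\alt$ is basis-free, hence natural.

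For injectivity, after choosing any matched basis $\un\chi\times\un x$ with Kummer-dual basis $\un\phi$ of $G$, the intrinsic $\alt$ coincides, under the identification $\Alt(G,\tfrac{1}{n}\Z/\Z)\cong\Alt_{d+1}(\tfrac{1}{n}\Z/\Z)$ via $\un\phi$, with the matched-basis map $\alt_{\un\chi\times\un x}$ of Theorem~\ref{alta}. Injectivity transfers immediately from that theorem.

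For the non-extension when $n=2^{\v_2(q-1)}$, I invoke Example~\ref{failure} directly. That example exhibits the class $\alpha=(x_1,x_2)_n+(x_1,x_3)_n+(x_2,x_3)_n$ in ${}_n\Br(F)\setminus{}_n2\,\Br(F)$: under the basis change $\un x=\un tP$ computed there, $\alpha$ rewrites as $(-1,t_1)_n+(t_2,t_3)_n$, whose first summand is a nonzero order-$2$ class by Theorem~\ref{tt}, and is therefore not in $2\,\Br(F)$ by Albert's criterion. The example further computes that the naive skew-symmetric matrices $A=\alt_{\un x}(\alpha)$ and $\alt_{\un t}(\alpha)$ coming from the two bases fail to be related by the congruence $A\mapsto PAP^\tr$. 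Any natural extension $\widetilde\alt:{}_n\Br(F)\to\Alt(G,\tfrac{1}{n}\Z/\Z)$ would have to respect basis change by this congruence rule, as in Theorem~\ref{alta}; the explicit inconsistency thus forbids any such extension. The main obstacle I anticipate is the well-definedness check on $C$: translating the matched-pair conditions \eqref{eqns'}, which mix the two invariant factors $n$ and $m'$ of ${}_n2\,\X(F)$, into vanishing of the alternating-form coefficients $b_ic_j-b_jc_i$ in $\tfrac{1}{n}\Z/\Z$ requires careful bookkeeping enabled by Corollary~\ref{tt'}. Once this is in place, injectivity and non-extension fall out cleanly from Theorems~\ref{alta}, \ref{cocycleskim} and Example~\ref{failure}.
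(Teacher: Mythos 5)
Your construction of the map on $2\,\Br(F)$ and the injectivity argument follow the paper's approach closely (verify that the matched-pair generators of $C$ from Proposition~\ref{wedgebasis} map to zero alternating forms, then transfer injectivity from Theorem~\ref{alta}). One caution on your parenthetical reasoning: the claim ``$(\theta,t)=0$ in the Brauer group, so the cup-product cocycle is a coboundary, and coboundaries are symmetric'' conflates coboundaries for $\G_F$ with coboundaries for $G$. Vanishing of $\theta\ccup(t)_n$ in $\H^2(F,\tfrac 1n\Z/\Z)$ only places the cocycle in $\ker\bigl(\inf:\H^2(G,\tfrac 1n\Z/\Z)\to\H^2(F,\tfrac 1n\Z/\Z)\bigr)$, which by Proposition~\ref{tg} is the transgression image and is \emph{not} contained in $\ker[\alt]$ in general (indeed that failure is exactly what Proposition~\ref{tg} measures). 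So the direct expansion against \eqref{eqns'} is not an alternative to, but the actual content of, the well-definedness check --- and it requires the normalization in Definition~\ref{wedge}(a), not merely $(\theta,t)=0$. You flagged this as the anticipated obstacle, which is the right instinct.

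For the non-extension, your route genuinely diverges from the paper's. The paper proves Proposition~\ref{tg} (computing $[\alt]\circ\tg$) and Lemma~\ref{neqker} (tracking $\phi_i^*\ccup\phi_i^*$ through $\H^2(G,\cdot)$, $\Alt(G,\cdot)$ and $\H^2(F,\cdot)$), then concludes that antisymmetrization on ${}_n\X(F)\otimes{}_n\X(F)$ factors through the cup product onto $\H^2(F,\tfrac 1n\Z/\Z)$ exactly on the subgroup ${}_n2\,\Br(F)$. You instead invoke Example~\ref{failure}. The underlying obstruction is the same --- $(t,t)_n=(-1,t)_n$ for $(t)_n^*$ totally ramified of order $n=2^{\v_2(q-1)}$ yields an element $(t)_n^*\otimes(t)_n^*-(-1)_n^*\otimes(t)_n^*$ of $\ker(\ccup)$ with nonzero alternating form --- but two points need attention. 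First, the way you phrase it (``any natural extension would have to respect basis change\ldots the explicit inconsistency forbids any such extension'') slightly misidentifies the issue: what Example~\ref{failure} exhibits is a failure of well-definedness (two coordinate expressions of the same class producing non-congruent matrices), not a failure of basis-change functoriality of an existing map; you should make this the explicit obstruction, i.e., produce an element of $\ker(\ccup)\setminus\ker(\alt)$. Second, Example~\ref{failure} is a single calculation over $\F_p((x_1))((x_2))((x_3))$, whereas Theorem~\ref{unnatural} is stated for arbitrary henselian-valued $F$ of any rank $d$; your argument needs to be redone for general $F$, which is precisely what Lemma~\ref{neqker}(d) buys via $(\phi_i^*,-x_i)=0$ and Albert's criterion. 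The paper's route is heavier but uniform; yours is more concrete but incomplete as stated.
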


\begin{proof}
We identify $\H^1(F,\tfrac 1n\Z/\Z)\otimes\cZ^1(F,\mu_n)$ 
with $\H^1(F,\tfrac 1n\Z/\Z)\otimes\H^1(F,\tfrac 1n\Z/\Z)$ using $\zeta_n^*$, and the
basis elements $\chi_i\otimes x_j^{1/n}$ then have form $\chi_i\otimes\chi_j$.
The subgroup $C\leq{}_n2\,\X(F)\otimes{}_n\X(F)$ of Theorem~\ref{cocycleskim} has basis 
\[\{\chi_i\otimes\chi_i,\tfrac{|\chi_j|}{m'}
(\chi_j\otimes\chi_k+\chi_k\otimes\chi_j),\;0\leq i\leq d,0\leq j<k\leq d\}
\]
by the proof of Proposition~\ref{wedgebasis}.
Application of $\alt$ shows $C$ is contained in the kernel $K$
of Lemma~\ref{uct}, hence the natural map
$\alt:{}_n2\,\Br(F)\to\Alt(G,\tfrac 1n\Z/\Z)$ exists.

We complete the proof with a lemma
that isolates a basis of elements of order $2$ in $\H^1(F,\tfrac 1n\Z/\Z)\otimes\H^1(F,\tfrac 1n\Z/\Z)$
that map to zero in $\Alt(G,\tfrac 1n\Z/\Z)$ but not in $\H^2(F,\tfrac 1n\Z/\Z)$,
foiling the definition of $\alt$ on $\H^2(F,\tfrac 1n\Z/\Z)$ when $n=2^{\v_2(q-1)}$.

\begin{Lemma}\label{neqker}
Assume \eqref{setup}, with  $n$ a power of $\ell$, $\mu_n\leq F^\times$,
and $G=\Gal(F^{1/n}/F)$.
Let $\un\phi=\{\phi_0,\dots,\phi_d\}$ be a basis for $G$,
with dual basis $\un\phi^*=\{\phi_0^*,\dots,\phi_d^*\}$
in $\H^1(F,\tfrac 1n\Z/\Z)$, and consider the cup product map
\[\ccup:\H^1(F,\tfrac 1n\Z/\Z)\otimes\H^1(F,\tfrac 1n\Z/\Z)\lr\H^2(G,\tfrac 1n\Z/\Z)\]
\begin{enumerate}[\rm (a)]
\item\label{tga}
If $\ell$ is odd, then
$\phi_i^*\ccup\phi_i^*=0$ in $\H^2(G,\tfrac 1n\Z/\Z)$, $\forall i:0\leq i\leq d$.
\item\label{tgb}
If $\ell=2$ then 
$\phi_i^*\ccup\phi_i^*$ has order $2$ in $\H^2(G,\tfrac 1n\Z/\Z)$.
\item\label{tgc}
If $\ell=2$ then
$\phi_i^*\ccup\phi_i^*=0$ in $\H^2(G,\tfrac 1{2n}\Z/\Z)$.
\item\label{tgd}
If $\ell=2$ then $\inf(\phi_i^*\ccup\phi_i^*)=0$ in $\H^2(F,\tfrac 1n\Z/\Z)$
if and only if $q\equiv 1\pmod{2e}$, where $e=[\Gamma_{F(\phi_i^*)}:\Gamma_F]$.
\end{enumerate}
\end{Lemma}

\begin{proof}
Since $\H^1(F,\tfrac 1n\Z/\Z)=\H^1(G,\tfrac 1n\Z/\Z)$, the cup product map makes sense.
For each $\sigma\in G$, define $a_\sigma:0\leq a_\sigma<n$ by $\phi_i(\sigma)=a_\sigma/n$.
Then for any $\sigma,\tau\in G$,
\[a_{\sigma\tau}=\begin{cases}
a_\sigma+a_\tau&\text{if $a_\sigma+a_\tau<n$}\\
a_\sigma+a_\tau-n&\text{if $a_\sigma+a_\tau\geq n$}
\end{cases}\]
Let $\Z[\Z]$ denote the group ring of the additive group of integers.
Let $\Z[G]$ be the (additive) $\Z[\Z]$-module
with the action $a\cdot b:=\phi_i^a b$ for all $a\in\Z$ and $b\in\Z[G]$.
Let $\gamma_{\phi_i}\in\cZ^1(\Z,\Z[G])$ be the 1-cocycle
defined by $\gamma_{\phi_i}(1)=1$.
Then $\gamma_{\phi_i}(0)=0$, and by the 1-cocycle condition,
\[
\gamma_{\phi_i}(a)=1+\phi_i+\cdots+\phi_i^{a-1}\qquad (a\in\N)\] 

We first prove \eqref{tga}.
Suppose $\ell$ is odd, and define $\beta\in\cC^1(G,\tfrac 1n\Z/\Z)$ by 
\[\beta(\sigma)=\phi_i^*(\gamma_{\phi_i}(a_\sigma))-(\phi_i^*\otimes\phi_i^*)(\sigma,\sigma)\]
where $\phi_i^*\otimes\phi_i^*$ is the bilinear form with values in $\tfrac 1n\Z/\Z$,
and we extend $\phi_i^*$ to $\Z[G]$ using the rule 
$\phi_i^*(\tau-\sigma)=\phi_i^*(\tau)-\phi_i^*(\sigma)$.
Since the action of $G$ on $\tfrac 1n\Z/\Z$ is trivial,
$\partial\beta(\sigma,\tau):=\beta(\sigma)+\beta(\tau)-\beta(\sigma\tau)$, hence
\begin{align*}
\partial\beta(\sigma,\tau)
&=\phi_i^*(\gamma_{\phi_i}(a_\sigma)+\gamma_{\phi_i}(a_\tau)-\gamma_{\phi_i}(a_{\sigma\tau}))
\\&\qquad
-(\phi_i^*\otimes\phi_i^*)(\sigma,\sigma)-(\phi_i^*\otimes\phi_i^*)(\tau,\tau)+
(\phi_i^*\otimes\phi_i^*)(\sigma\tau,\sigma\tau)\\
&=\phi_i^*(\gamma_{\phi_i}(a_\sigma)+\gamma_{\phi_i}(a_\tau)-\gamma_{\phi_i}(a_{\sigma\tau}))
+2\phi_i^*\otimes\phi_i^*(\sigma,\tau)\\
&=\begin{cases}
\phi_i^*(\gamma_{\phi_i}(a_\sigma)-\phi_i^{a_\tau}\gamma_{\phi_i}(a_\sigma))+2\frac{a_\sigma a_\tau}n
&\text{if $a_\sigma+a_\tau<n$}\\
\phi_i^*(\gamma_{\phi_i}(a_\sigma)-\phi_i^{a_\tau}\gamma_{\phi_i}(a_\sigma)-\gamma_{\phi_i}(n))+2\frac{a_\sigma a_\tau}n
&\text{if $a_\sigma+a_\tau\geq n$}\\
\end{cases}\\
&=\begin{cases}
\frac 1n+\cdots+\frac{a_\sigma-1}n
-(\frac{a_\tau}n+\frac{a_\tau+1}n+\cdots+\frac{a_\tau+a_\sigma-1}n)+2\frac{a_\sigma a_\tau}n
&\text{if $a_\sigma+a_\tau<n$}\\
\frac 1n+\cdots+\frac{a_\sigma-1}n
-(\frac{a_\tau}n+\frac{a_\tau+1}n+\cdots+\frac{a_\tau+a_\sigma-1}n)-\phi_i^*(\gamma_{\phi_i}(n))+2\frac{a_\sigma a_\tau}n
&\text{if $a_\sigma+a_\tau\geq n$}\\
\end{cases}\\
&=\begin{cases}
-\frac{a_\sigma a_\tau}n+2\frac{a_\sigma a_\tau}n
&\text{if $a_\sigma+a_\tau<n$}\\
-\frac{a_\sigma a_\tau}n+2\frac{a_\sigma a_\tau}n-\phi_i^*(\gamma_{\phi_i}(n))
&\text{if $a_\sigma+a_\tau\geq n$}\\
\end{cases}\\
&=\begin{cases}
\frac{a_\sigma a_\tau}n
&\text{if $a_\sigma+a_\tau<n$}\\
\frac{a_\sigma a_\tau}n-\frac{n(n-1)}{2n}
&\text{if $a_\sigma+a_\tau\geq n$}\\
\end{cases}
\end{align*}

If $\ell$ is odd, then $2|n-1$, so $n(n-1)/2n=0$,
and $\partial\beta=\phi_i^*\otimes\phi_i^*$.
Therefore $\phi_i^*\ccup\phi_i^*=0$ in $\H^2(G,\tfrac 1n\Z/\Z)$ when $\ell$ is odd, proving \eqref{tga}.

If $\ell=2$, then $2|n$, so $n(n-1)/2n=-1/2$.
The short exact sequence of trivial $G$-modules
\[0\lr\tfrac 1n\Z/\Z\lr\tfrac 1{2n}\Z/\Z\lr\tfrac 12\Z/\Z\lr 0\]
combined with the fact that $G$ has exponent $n$ yields the long exact sequence fragment
\[\begin{tikzcd}
0\arrow[r]&\H^1(G,\tfrac 12\Z/\Z)\arrow[r, "\delta"]&\H^2(G,\tfrac 1n\Z/\Z)\arrow[r]&\H^2(G,\tfrac 1{2n}\Z/\Z)
\end{tikzcd}\]
Since $G\isom(\Z/n)^{d+1}$ has basis $\{\phi_i:0\leq i\leq d\}$, 
$\H^1(G,\tfrac 12\Z/\Z)$ is generated by the $\tfrac n2\phi_i^*$,
and by construction 
\[\delta(\tfrac n2\phi_i^*)(\sigma,\tau)=
\frac{a_\sigma+a_\tau-a_{\sigma\tau}}{2n}=
\begin{cases}
0&\text{if $a_\sigma+a_\tau<n$}\\
-1/2&\text{if $a_\sigma+a_\tau\geq n$}
\end{cases}\]
Thus we find $\partial\beta=\phi_i^*\otimes\phi_i^*+\delta(\tfrac n2\phi_i^*)$.
Therefore $\phi_i^*\ccup\phi_i^*=\delta(\tfrac n2\phi_i^*)$ has order $2$ in $\H^2(G,\tfrac 1n\Z/\Z)$,
and is trivial in $\H^2(G,\tfrac 1{2n}\Z/\Z)$. In fact, $\delta(\tfrac n2\phi_i^*)=\partial\varepsilon$,
where $\varepsilon\in\cC^1(G,\tfrac 1{2n}\Z/\Z)$ is defined by $\varepsilon(\sigma)=a_\sigma/2n\in\tfrac 1{2n}\Z/\Z$.
This proves \eqref{tgb} and \eqref{tgc}.

Let $\phi_i^*=(x_i)_n^*$, where $(x_i)_n\in F^\times\!/n$.
When $\ell=2$ we have $(\phi_i^*,-x_i)=(\phi_i^*,-1)+(\phi_i^*,x_i)=0$ 
in $\H^2(F,\tfrac 1n\Z/\Z)$,
as mentioned in \eqref{cyclic}, 
and $(\phi_i^*,-1)=0$ if and only if $\phi_i^*$ extends to a character of order $2n$, 
by Albert's criterion.
Suppose $[\Gamma_{F(\phi_i^*)}:\Gamma_F]=e$.
By Theorem~\ref{chargp} we may write $\phi_i^*=\chi+\lambda$, where $\chi$ is unramified and $\lambda$
is totally ramified of order $e$. The character $\chi$ always extends, since it is defined over a
finite field, hence $\phi_i^*$ extends if and only if $\lambda$ extends, and by Kummer theory,
$\lambda$ extends to a character of order $2e$ if and only if $q\equiv 1\pmod{2e}$. 
Therefore $(\phi_i^*,x_i)$ equals zero if and only if $q\equiv 1\pmod{2e}$,
and since $(\phi_i^*,x_i)=\inf(\phi_i^*\ccup\phi_i^*)$, this proves \eqref{tgd}.
\end{proof}

\noindent
{\it Finish proof of Theorem~\ref{unnatural}.}
By Lemma~\ref{neqker}\eqref{tga} and \eqref{tgd},
the natural map 
\[\alt:{}_n\X(F)\otimes{}_n\X(F)\to\Alt(G,\tfrac 1n\Z/\Z)\] 
which factors through $\H^2(G,\tfrac 1n\Z/\Z)$, giving $[\alt]$,
factors through $\H^2(F,\tfrac 1n\Z/\Z)$ on precisely the subgroup ${}_n2\,\Br(F)$,
the elements not of the form $(t,t)_n$ when 
$(t)_n^*$ is totally ramified of order $n=2^{\v_2(q-1)}$. 
Since ${}_n\Br(F)={}_n2\,\Br(F)\Leftrightarrow n\neq 2^{\v_2(q-1)}$,
this completes the proof.
\end{proof}

\bibliographystyle{abbrv} 
\bibliography{hnx.bib}

\end{document}